\providecommand{\U}[1]{\protect\rule{.1in}{.1in}}
\numberwithin{equation}{section}
\newtheorem{theorem}{Theorem}[section]
\newtheorem{assumption}[theorem]{Assumption}
\newtheorem{lemma}[theorem]{Lemma}
\newtheorem{corollary}[theorem]{Corollary}
\newtheorem{proposition}[theorem]{Proposition}
\newtheorem{remark}[theorem]{Remark}
\newtheorem{example}[theorem]{Example}
\newtheorem{definition}[theorem]{Definition}
\def\<{\langle}
\def\>{\rangle}
\def\R{\mathbb{R}}
\def\T{\mathbb{T}}
\def\Z{\mathbb{Z}}
\def\cF{{\mathcal F}}
\def\cH{{\mathcal H}}
\def\cQ{{\mathcal Q}}
\def\cS{{\mathcal S}}
\def\cX{{\mathcal X}}
\def\mN{{\mathbb N}}
\def\mR{{\mathbb R}}
\def\bE{{\mathbf E}}
\def\bP{{\mathbf P}}
\def\geq{\geqslant}
\def\leq{\leqslant}
\def\l{\left}
\def\r{\right}
\def\<{\langle}
\def\>{\rangle}
\def\1{{\mathbf{1}}}
\def\p{\partial}
\def\d{\mathrm{d}}
\def\e{\mathrm{e}}
\def\div{\mathord{{\rm div}}}
\def\curl{\mathrm{curl}}
\tikzstyle{notestyle} = [fill = yellow!35, rounded corners, text width=0.98\marginparwidth]
\tikzstyle{connectstyle} = [draw = red!50, thick]
\tikzstyle{my-yshift} = [yshift = 0.2\baselineskip]
\tikzstyle{my-xshift-right} = [xshift = -0.05\marginparwidth]
\tikzstyle{my-xshift-left} = [xshift = 0.04\marginparwidth]
\begin{document}
	
	\title{Quantitative estimates for SPDEs on the full space with\\ transport noise and $L^p$-initial data}
	
	\author{Dejun Luo\footnote{Email: luodj@amss.ac.cn. Key Laboratory of RCSDS, Academy of Mathematics and Systems Science, Chinese Academy of Sciences, Beijing 100190, China and School of Mathematical Sciences, University of Chinese Academy of Sciences, Beijing 100049, China} \quad
Bin Xie\footnote{Email: bxie@shinshu-u.ac.jp. Department of Mathematics, Faculty of Science, Shinshu University, Matsumoto, Nagano, 390-8621, Japan}
\quad Guohuan Zhao\footnote{Email: gzhao@amss.ac.cn. Institute of Applied Mathematics, Academy of Mathematics and Systems Science, CAS, Beijing, 100190, China}}
	
	\maketitle

\vspace{-20pt}
	
    \begin{abstract}
    For the stochastic linear transport equation with $L^p$-initial data ($1<p<2$) on the full space $\R^d$, we provide quantitative estimates, in negative Sobolev norms, between its solutions and that of the deterministic heat equation. Similar results are proved for the stochastic 2D Euler equations with transport noise.
    \end{abstract}

	
	 \textbf{Keywords}: transport equation, 2D Euler equation, transport noise, stochastic convolution, maximal estimate
	
	 \textbf{MSC (2020):} 35R60, 60H50
	
	\section{Introduction}

In the celebrated work \cite{FGP10}, Flandoli et al. proved that a $d$-dimensional Wiener process $W=W(t)$, in the form of transport noise, restores uniqueness of solutions to the linear transport equation with H\"older continuous drift terms. Since then, the theory of regularization by transport noise for stochastic partial differential equations (SPDEs) has attracted a lot of interests. However, as mentioned in \cite[Section 6.2]{FGP10}, the simple space-independent transport noise does not work for nonlinear equations, and one has to consider noises $W=W(t,x)$ with nontrivial space correlations. Let us mention that, in the last decade, there are numerous efforts trying to derive physically well-motivated noises in stochastic fluid dynamics equations, see \cite{Holm15} for variational arguments, \cite{FlanPap21, FlanPap22} for separation of scales and so on. It is by now widely accepted that multiplicative transport noise is appropriate for passive scalars and 2D Euler equations in vorticity form; however, for 3D fluid equations or passive magnetic fields, an extra stretching noise should be involved, see e.g. \cite{CFH19} and \cite[Section 1.2]{FlanLuo21}.

In recent years, motivated by Galeati's work \cite{Gal} on the scaling limit of stochastic linear transport equation to the deterministic heat equation, there have been intensive studies of similar limits of SPDEs with transport noise to the corresponding deterministic equations with extra dissipation term, see e.g. \cite{Agr24, Agr24b, CarLuon23, FGL21, FGL21b, FGL24, FlanLuo21, FLL24, GalLuo24, Luo21, Luo23, LuoTang23} and also \cite{BFL24, BFLT24, ButLuon24, FlanLuo24} for works concerning 3D fluid equations or passive magnetic fields; some of the results are also summarized in the lecture notes \cite{FlaLuon23}. Despite the different equations and topics treated in the above works, they have two points in common: the equations are studied on the torus $\T^d= \R^d/\Z^d$ and the solutions are assumed to be square integrable (with also $H^1$-regularity when dissipation terms are present in the SPDEs). The torus setting allows us to construct explicit orthonormal basis of the space $H= L^2_\sigma(\T^d, \R^d)$ of divergence free vector fields with zero spatial average. Not only does the basis leads to series expansion of $\R^d$-valued space-time noise $W=W(t,x)$, but it also simplifies many computations, e.g. in the estimates of stochastic convolutions.

The purpose of the current paper is twofold: extending the scaling limit results to SPDEs on the full space $\R^d$, and working in the framework of $L^p$-initial data with 
$p<2$. In the full space setting, although there is no explicit basis of $L^2_\sigma(\R^d, \R^d)$, for general trace-class noise $W=W(t,x)$ on $\R^d$, Galeati and the first author of the present paper have shown that one can still expand the noise into an abstract series, see \cite[Section 2.1]{GalLuo} or Lemma \ref{lem:covariance-series-representation} below; such an expansion facilitates some expressions and calculations. Compared to the torus case where one often works with homogeneous spaces consisting of zero-average functions, on the full space $\R^d$ we have to be careful in the distinction of estimates in homogeneous and nonhomogeneous Sobolev spaces, see for instance Section 3 for estimates on stochastic convolutions. We mention that Agresti \cite{Agr24b} has applied scaling limit arguments beyond the $L^2$-setting to show global well-posedness by transport noise for stochastic hyperviscous Navier-Stokes equations, which are still considered on the torus $\T^3$.

In the remainder part of this section, we first give some definitions and notations frequently used in the sequel, then we describe the structure of vector-field valued noise $W=W(t,x)$, finally we state our quantitative estimates on the stochastic linear transport equations and stochastic 2D Euler equations.

For $x\in \R^d$, we write $\<x\>= (1+|x|^2)^{1/2}$. Denote by $\mathcal S$ the space of Schwartz test functions and
	$$
	\widehat{f}(\xi)=\cF(f)(\xi):= \frac1{(2\pi)^{d/2}} \int_{\mR^d} \e^{- {\rm i}\, x\cdot \xi} f(x)\, \d x ,\quad \xi\in \R^d
	$$
the Fourier transform of a function $f:\R^d\to \R^m$, $d,m\geq 1$. As usual, $L^p_x = L^p(\R^d)$ with $p\in [1,\infty]$ stands for the Lebesgue spaces with the norm $\|\cdot \|_{p}= \|\cdot \|_{L^p_x}$; for $\alpha\in \R$, we write $\dot H^\alpha_x= \dot H^\alpha(\R^d)$ for the usual homogeneous Sobolev space on $\R^d$, endowed with the semi-norm
  $$\|f \|_{\dot H^\alpha_x}= \bigg(\int_{\R^d} |\xi|^{2\alpha} |\widehat f(\xi)|^2\,\d\xi \bigg)^{1/2},$$
and the notation without dot is reserved for the corresponding inhomogeneous spaces, with a similar norm $\|f \|_{H^\alpha_x}$ replacing $|\xi|^{2\alpha}$ by $\<\xi \>^{2\alpha}$. We will use the same notations for spaces of vector fields on $\R^d$; there will be no confusion according to the context.

Given $T>0$ and $p,q\in [1,\infty]$, we write $L^p_t L^q_x$ for the time-dependent space $L^p(0,T; L^q(\R^d))$; in the same way, $C_t L^q_x$ and $C_t \dot H^\alpha_x$ stand for the spaces $C([0,T]; L^q(\R^d))$ and $C([0,T]; \dot H^\alpha(\R^d))$, respectively. Sometimes, we replace the subscript $t$ by $T$ to emphasize the length of time interval $[0,T]$. By $a\lesssim b$ we mean that there exists an unimportant constant $C>0$ such that $a\leq Cb$; if we want to stress the dependence of $C$ on some parameters $\alpha, p$, then we write $a\lesssim_{\alpha, p} b$. To avoid confusion with the vorticity variable $\omega$ in stochastic 2D Euler equations, we will write probability space as $(\Theta,\mathbb F, \bP)$ with generic element $\theta$.

	\subsection{Structure of the noise}\label{subsec-noise}

This part is mainly taken from \cite[Section 2.1]{GalLuo}. Let $W=W(t,x)$ be a centered Gaussian vector field defined on a filtered probability space $(\Theta,\mathbb F, \bP)$, possibly taking values in spaces of distributions. Assume that $W$ is white in time, coloured and divergence-free in the space variable;
we also assume it to be \emph{homogeneous} in space, so that its law is completely characterized by the covariance function $Q:\R^d \to \R^{d\times d}$ defined by
\begin{equation*}
	\bE[W(t,x)\otimes W(s,y)]= (t\wedge s) Q(x-y)
	\quad \forall\, t,s\in \R_+,\, x,y\in \R^d.
\end{equation*}
Here is our basic assumption on the structure of $Q$.

\begin{assumption}\label{ass:covariance-basic}
The covariance matrix $Q$ has Fourier transform $\widehat{Q}$ given by
\begin{equation}\label{eq:covariance-basic}
	\widehat{Q} (\xi)
	:=  g(\xi) P_\xi
	= g(\xi) \bigg(I_d- \frac{\xi\otimes \xi}{|\xi|^2} \bigg)
\end{equation}
for a non-negative radial function $g(\xi)=G(|\xi|)$ satisfying $g\in L^1 (\R^d) \cap L^\infty(\R^d) $; $I_d$ is the $d\times d$ unit matrix.
\end{assumption}

The matrix $P_\xi$ in \eqref{eq:covariance-basic} means the projection on the $(d-1)$-dimensional subspace of $\R^d$ orthogonal to $\xi\neq 0$; it corresponds to the divergence free property of the noise $W$.
Next, the radiality of $g$ implies that the noise is \emph{isotropic}: for any orthogonal matrix $R\in O(d)$, it holds
\begin{equation*}
	R\, Q(x) R^T = Q(R x), \quad R\, \widehat{Q}(\xi) R^T = \widehat{Q}(R \xi).
\end{equation*}
Since $g$ is real-valued, $Q$ is symmetric, namely $Q(x)=Q(-x)$. Moreover, one easily deduces from \eqref{eq:covariance-basic} that
\begin{equation}\label{eq:Q_0}
	Q(0)=2\kappa I_d, \quad \kappa:= \frac{d-1}{2d}\, \| g\|_{1}.
\end{equation}
Let $\cQ$ be the covariance operator associated to $Q$, acting on suitable $\R^d$-valued functions $f$:
\begin{equation*}
	(\cQ f)(x) := (Q\ast f)(x)= \int_{\R^d} Q(x-y) f(y)\,\d y;
\end{equation*}
equivalently, $\cQ$ can be defined as the Fourier multiplier $\cQ f= (2\pi)^{d/2} \mathcal{F}^{-1} \big(\widehat{Q} \widehat f\, \big)$. In the sequel we will often say that $W$ is a $\cQ$-Wiener process (or $Q$-Wiener process).

Let $\cQ^{1/2}$ be the square root of the covariance operator $\cQ$; we can define the Cameron-Martin space $\cH:=\cQ^{1/2}(L^2(\R^d;\R^d))$ of the noise $W(t,x)$. We refer to \cite[Section 2.1]{GalLuo} for more properties of the covariance function $Q$, the covariance operator $\cQ$ and the Cameron-Martin space $\cH$.

We now state tensorized representations of $Q$ and $W$, see \cite[Section 2.1]{GalLuo} for the proof.

\begin{lemma}\label{lem:covariance-series-representation}
Let $\{\sigma_k\}_{k\in\mathbb N}$ be any complete orthonormal system of $\cH$ made of smooth, divergence free functions. Then it holds
\begin{equation}\label{eq:covariance-series-representation}
	Q(x-y)=\sum_{k\in\mathbb N} \sigma_k(x) \otimes \sigma_k(y) \quad \forall\, x,y\in \R^d,
\end{equation}
where the series converges absolutely and uniformly on compact sets. Moreover, the noise admits the series representation
\begin{equation}\label{eq:noise-series-representation}
	W(t,x) = \sum_{k\in\mathbb N} \sigma_k(x) B^k_t,
\end{equation}
where $\{B^k \}_k$ is a family of independent standard Brownian motions defined by
\begin{equation*}\label{eq:chaos-expansion-finite-noises}
	B^k_t := \frac{\langle W_t, \cQ^{-1/2} \sigma_k\rangle}{\|\sigma_k \|_{L^2_x}}.
\end{equation*}
\end{lemma}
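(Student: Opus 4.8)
The identity \eqref{eq:covariance-series-representation} is precisely the statement that $Q(x-y)$ is the reproducing kernel of the Cameron--Martin space $\cH=\cQ^{1/2}(L^2_x)$, expanded in an arbitrary orthonormal basis; the representation \eqref{eq:noise-series-representation} is then obtained by transporting this expansion through the canonical Gaussian isometry associated with $W$. I would record at the outset that $\cH$ carries the inner product $\<h_1,h_2\>_\cH=\<\cQ^{-1/2}h_1,\cQ^{-1/2}h_2\>_{L^2_x}$, and that, since $\widehat Q(\xi)=g(\xi)P_\xi$ is symmetric and non-negative with matrix square root $\sqrt{g(\xi)}\,P_\xi$, the operator $\cQ^{1/2}$ acts on the Fourier side as the multiplier $(2\pi)^{d/4}\sqrt{g}\,P_\xi$. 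The crucial first step is the \emph{reproducing property}: for every $y,v\in\R^d$ the field $Q(\cdot-y)v$ belongs to $\cH$ (its squared $\cH$-norm is $\int g(\xi)\,|P_\xi v|^2\,\d\xi\le |v|^2\|g\|_1$) and
\begin{equation*}
  \<h,\,Q(\cdot-y)v\>_\cH=h(y)\cdot v,\qquad h\in\cH.
\end{equation*}
I would verify this directly by Plancherel, using $\widehat h=P_\xi\widehat h$ (the divergence-free constraint), after which the multipliers $\sqrt g$ and $\sqrt g^{\,-1}$ cancel and Fourier inversion produces $h(y)\cdot v$.

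Granting the reproducing property, \eqref{eq:covariance-series-representation} follows: expanding $Q(\cdot-y)v=\sum_k\<Q(\cdot-y)v,\sigma_k\>_\cH\,\sigma_k$ and evaluating the coefficients gives $Q(\cdot-y)v=\sum_k(\sigma_k(y)\cdot v)\,\sigma_k$ in $\cH$; since point evaluation is $\cH$-continuous this holds pointwise, and letting $v$ range over a basis of $\R^d$ yields the matrix identity for each $x$. To upgrade to absolute and locally uniform convergence I would exploit the diagonal: by \eqref{eq:Q_0}, $\sum_k|\sigma_k(x)|^2=\mathrm{tr}\,Q(0)=(d-1)\|g\|_1$ is finite and \emph{constant} in $x$. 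The partial sums $\sum_{k\le N}|\sigma_k(\cdot)|^2$ are continuous and increase to this constant, so Dini's theorem gives uniform convergence on compacta; a Cauchy--Schwarz estimate $\big|\sum_{k> N}\sigma_k(x)\otimes\sigma_k(y)\big|\le\big(\sum_{k>N}|\sigma_k(x)|^2\big)^{1/2}\big(\sum_{k>N}|\sigma_k(y)|^2\big)^{1/2}$ then controls the tails uniformly on compact sets.

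For \eqref{eq:noise-series-representation} I would work with the Gaussian isometry $I_t\colon\cH\to L^2(\Theta)$ determined by $\bE[I_t(h)I_s(g)]=(t\wedge s)\<h,g\>_\cH$ and compatible with $W$ through $\<W_t,\phi\>=I_t(\cQ\phi)$ for test fields $\phi$ (the two descriptions agree because $\bE[\<W_t,\phi\>\<W_s,\psi\>]=(t\wedge s)\<\phi,\cQ\psi\>_{L^2_x}=(t\wedge s)\<\cQ\phi,\cQ\psi\>_\cH$). Setting $B^k_t:=I_t(\sigma_k)$, orthonormality of $\{\sigma_k\}$ in $\cH$ gives $\bE[B^j_tB^k_s]=(t\wedge s)\delta_{jk}$; joint Gaussianity promotes the vanishing covariances to independence, and with continuity in $t$ this identifies the $B^k$ as independent standard Brownian motions. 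Reconstruction is then immediate: expanding $\cQ\phi=\sum_k\<\cQ\phi,\sigma_k\>_\cH\sigma_k$ and applying $I_t$ gives $\<W_t,\phi\>=\sum_k\<\phi,\sigma_k\>_{L^2_x}B^k_t$ in $L^2(\Theta)$, which is exactly the weak form of \eqref{eq:noise-series-representation}, with convergence controlled as in the first part. The stated closed formula for $B^k_t$ results from inverting the isometry, $B^k_t=I_t(\sigma_k)$, and rewriting this in terms of $\cQ^{-1/2}$ and the $L^2_x$-normalization of $\sigma_k$.

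The routine parts are the Fourier computation of the reproducing kernel and the Dini/Cauchy--Schwarz convergence argument. The main obstacle is functional-analytic bookkeeping: $W$ is only distribution-valued and the operators $\cQ^{-1/2},\cQ^{-1}$ are unbounded, so the isometry $I_t$ and all pairings must be set up on the dense range of $\cQ^{1/2}$ and extended by continuity; one must also keep the genuinely basis-free identity \eqref{eq:covariance-series-representation} carefully separated from the coefficient formula for $B^k_t$, where reconciling the $\cH$- and $L^2_x$-normalizations in the definition of $B^k_t$ is the one delicate point.
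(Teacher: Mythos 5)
Your overall strategy is sound, and it is in fact the strategy of the proof the paper points to: the paper itself contains no proof of this lemma but defers to \cite[Section 2.1]{GalLuo}, where the kernel identity \eqref{eq:covariance-series-representation} is likewise obtained from the reproducing-kernel property of $\cH$ (your Plancherel computation, using $\widehat h = P_\xi \widehat h$, is the right verification), and the representation \eqref{eq:noise-series-representation} from the Gaussian isometry associated with $W$. Your expansion of $Q(\cdot-y)v$ in the $\cH$-basis, the use of $\cH$-continuity of point evaluations, and the Dini/Cauchy--Schwarz upgrade to absolute, locally uniform convergence (using $\sum_k |\sigma_k(x)|^2 = \mathrm{tr}\, Q(0) = (d-1)\|g\|_1$) are all correct.

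There is, however, one genuine gap, precisely at the point you set aside as ``normalization bookkeeping'': the identification of $B^k_t = I_t(\sigma_k)$ with the stated closed formula. Inverting your isometry gives $B^k_t = I_t(\sigma_k) = \<W_t, \cQ^{-1}\sigma_k\>_{L^2_x}$ (equivalently, the Paley--Wiener integral $\<W_t,\sigma_k\>_\cH$), \emph{not} $\<W_t,\cQ^{-1/2}\sigma_k\>_{L^2_x}/\|\sigma_k\|_{L^2_x}$, and the two expressions differ for a general basis. Indeed, writing $e_k := \cQ^{-1/2}\sigma_k$, which is orthonormal in $L^2_x$, one has
\begin{equation*}
  \bE\big[\<W_t,e_j\>\<W_s,e_k\>\big] = (t\wedge s)\,\<e_j,\cQ e_k\>_{L^2_x} = (t\wedge s)\,\<\sigma_j,\sigma_k\>_{L^2_x},
\end{equation*}
so the processes $\<W_t,\cQ^{-1/2}\sigma_k\>_{L^2_x}/\|\sigma_k\|_{L^2_x}$ are standard Brownian motions individually, but their cross-covariances equal $\<\sigma_j,\sigma_k\>_{L^2_x}/(\|\sigma_j\|_{L^2_x}\|\sigma_k\|_{L^2_x})$, which need not vanish: orthonormality in $\cH$ does not imply orthogonality in $L^2_x$ (the $\sigma_k$ are not eigenfunctions of $\cQ$; on the full space such eigenfunctions are generally unavailable, which is the whole reason for working with an arbitrary $\cH$-basis). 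Consequently those processes are in general neither independent nor do they reconstruct $W$, so the ``reconciliation'' you defer cannot be carried out as an identity. The displayed formula only becomes correct if the pairing is read as the Cameron--Martin/Paley--Wiener pairing, with one factor of $\cQ^{-1/2}$ implicitly acting on $W_t$ (in which case no $L^2_x$-normalization is needed, since $\|\cQ^{-1/2}\sigma_k\|_{L^2_x} = \|\sigma_k\|_\cH = 1$), or if $\cQ^{-1/2}$ is replaced by $\cQ^{-1}$. Your proof is complete once $B^k_t$ is defined as $\<W_t,\cQ^{-1}\sigma_k\>_{L^2_x}$; as written, your final sentence asserts an identification that fails.
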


The following result will play a role in the estimates of stochastic convolutions.

\begin{proposition}\label{prop:sum-fourier-sigma}
    Under Assumption \ref{ass:covariance-basic}, the following identity holds in the sense of distribution:
    \begin{equation*}
        \sum_{k\in\mN} \widehat{\sigma_k}(\xi) \overline{\widehat{\sigma_k}(\eta)^{T}} = \widehat{Q}(\xi) \delta(\xi-\eta), \quad \xi, \eta\in \R^d,
    \end{equation*}
where $\widehat{\sigma_k}(\eta)^{T}$ is the transposition of the vector $\widehat{\sigma_k}(\eta)$, and the overline means complex conjugate; finally, $\delta(\xi-\eta)=1$ if $\xi=\eta$ and $0$ otherwise. 
\end{proposition}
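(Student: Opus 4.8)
The plan is to obtain the claimed identity by taking a (mixed) Fourier transform of the tensorized representation \eqref{eq:covariance-series-representation} of $Q$ and reading off the coefficient matrix and the Dirac mass from the translation structure $Q(x-y)$. All identities are to be understood between matrix-valued tempered distributions on $\R^d\times\R^d$, so it suffices to pair with Schwartz functions, equivalently to verify the identity entry by entry.

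First I would record the uniform bound that makes the series representation amenable to distributional manipulations. Setting $x=y$ in \eqref{eq:covariance-series-representation} and using \eqref{eq:Q_0} gives $\sum_{k}|\sigma_k(x)|^2 = \mathrm{tr}\,Q(0) = 2\kappa d$ for every $x\in\R^d$. Hence, by Cauchy--Schwarz, the partial sums $Q_N(x,y):=\sum_{k\le N}\sigma_k(x)\otimes\sigma_k(y)$ satisfy $|Q_N(x,y)|\le 2\kappa d$ uniformly in $N$ and in $(x,y)$, while converging pointwise to $Q(x-y)$ by Lemma \ref{lem:covariance-series-representation}. Since the constant dominating function is integrable against any Schwartz test function on $\R^{2d}$, dominated convergence yields $Q_N\to Q(\cdot-\cdot)$ in $\mathcal{S}'(\R^{2d})$.

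Next I would apply the Fourier transform, which is continuous on $\mathcal{S}'$. Because each $\sigma_k$ is real-valued, transforming in $x$ against $\e^{-\mathrm{i}x\cdot\xi}$ and in $y$ against $\e^{+\mathrm{i}y\cdot\eta}$ sends $\sigma_k(x)\otimes\sigma_k(y)$ to $\widehat{\sigma_k}(\xi)\,\overline{\widehat{\sigma_k}(\eta)^{T}}$, the conjugation being exactly the passage $\widehat{\sigma_k}(-\eta)=\overline{\widehat{\sigma_k}(\eta)}$. Thus the transform of $Q_N$ is the $N$-th partial sum of the left-hand side, and passing to the limit identifies $\sum_k\widehat{\sigma_k}(\xi)\,\overline{\widehat{\sigma_k}(\eta)^{T}}$ with the corresponding mixed transform of $(x,y)\mapsto Q(x-y)$. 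Evaluating the latter by the change of variables $u=x-y$, $v=y$ factorizes the integral: the $u$-integral returns $\widehat{Q}(\xi)$, up to the normalizing constant of the transform, while the $v$-integral $\int \e^{\mathrm{i}v\cdot(\eta-\xi)}\,\d v$ produces the Dirac mass $\delta(\xi-\eta)$. Collecting the two factors gives the asserted identity.

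The main obstacle is the rigorous passage from the locally uniform convergence of \eqref{eq:covariance-series-representation} to convergence in $\mathcal{S}'(\R^{2d})$, which is what legitimizes the term-by-term Fourier transform; the uniform bound $\sum_k|\sigma_k(x)|^2\equiv 2\kappa d$ is precisely what overcomes it. A secondary point requiring care is the bookkeeping of the sign and of the $(2\pi)$-normalization constants, so that one lands on $\delta(\xi-\eta)$ rather than $\delta(\xi+\eta)$ with coefficient $\widehat{Q}(\xi)$; here the complex conjugation in the second slot is exactly what selects the correct sign.
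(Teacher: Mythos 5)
Your proof is correct and follows essentially the same route as the paper's: both Fourier-transform the tensorized representation \eqref{eq:covariance-series-representation} and identify the mixed transform of $(x,y)\mapsto Q(x-y)$ as $\widehat{Q}(\xi)\,\delta(\xi-\eta)$, your appeal to continuity of the Fourier transform on $\mathcal{S}'(\R^{2d})$ being exactly the duality-pairing computation the paper carries out explicitly with test functions $\phi,\psi$. If anything, your explicit uniform bound $\sum_k |\sigma_k(x)|^2 = \mathrm{tr}\,Q(0) = 2\kappa d$, which legitimizes the dominated-convergence passage to the limit, supplies a detail the paper leaves implicit (it cites only locally uniform convergence of the partial sums, which by itself does not dominate the integral over all of $\R^{2d}$).
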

\begin{proof}
    Let $\phi, \psi\in \cS(\mR^d;\mR^d)$. Then 
    \begin{equation*}
        \begin{aligned}
            &\lim_{N\to\infty}  \iint_{\mR^d\times\mR^d}  \phi(\xi)^{T} \l(\sum_{k=0}^N\widehat{\sigma_k}(\xi)\overline{\widehat{\sigma_k}(\eta)^{T}}\r) \overline{\psi(\eta)} ~ \d \xi \d \eta\\
            &= \lim_{N\to\infty}  \iint_{\mR^d\times\mR^d}  \widehat{\phi}(x)^{T} \l(\sum_{k=0}^N \sigma_k(x) \sigma_k(y)^{T} \r) \overline{\widehat{\psi}(y)}~ \d x \d y\\
            &= \iint_{\mR^d\times\mR^d}  \widehat{\phi}(x)^{T}\, Q(x-y)\, \overline{\widehat{\psi}(y)}~ \d x \d y\\
            &= \int_{\mR^d} \phi(\xi)^T \widehat{Q}(\xi) \frac1{(2\pi)^{d/2}} \int_{\mR^d} \overline{\e^{{\rm i} y\cdot \xi} \widehat{\psi}(y)}~ \d y \d \xi  \\
            &= \int_{\mR^d} \phi(\xi)^T \widehat{Q}(\xi) \overline{\psi(\xi)}~ \d \xi, 
        \end{aligned}
    \end{equation*}
    where in the second identity we used the fact that $\sum_{k=0}^N \sigma_k(x) \sigma_k(y)^{T} \to Q(x-y)$ uniformly on any compact sets.  
\end{proof}

Finally, we give an example of a family of covariance functions $\{Q_\ell\}_{\ell\in (0,1)}$ whose Fourier transforms have constant $L^1$-norm, but their $L^q$-norms with $q>1$ vanish as $\ell\to 0$; these properties are needed for establishing convergence rates in the scaling limit results.

\begin{example}
		Fix $\lambda>0$. For any $\ell\in (0,1)$, assume that the function $g$ in \eqref{eq:covariance-basic} is given by
		\begin{equation}\label{eq:Kraichnan}
			g_\ell(\xi)= c_{d,\lambda}\, \ell^{-\lambda} |\xi |^{-(d+\lambda)} \1_{\{\ell^{-1}\leq |\xi|\leq 2\ell^{-1}\}},
		\end{equation}
where $c_{d,\lambda}$ is a normalizing constant such that $\|g_\ell\|_{L^1} =1$. However, for $q>1$, one has
$\|g_\ell \|_{L^q} \leq c_{d,\lambda,q}\, \ell^{d(q-1)/q}$ and, in particular, $\|g_\ell \|_{L^\infty}= c_{d,\lambda}\, \ell^{d}$; therefore, $\|g_\ell \|_{L^q}$ vanishes as $\ell\to 0$.

The choice \eqref{eq:Kraichnan} means that the noise $W(t,x)$ models turbulent small fluctuations at the scale $\ell\in (0,1)$. Thanks to the cut-off in frequency space, both the covariance function $Q_\ell$ and the corresponding noise are spatially smooth. 
	\end{example}

\subsection{Stochastic linear transport equations}
	
Let $W=W(t,x)$ be a centered $Q$-Wiener process defined on some filtered probability space  $(\Theta,\mathbb F, \bP)$, with a covariance function $Q$ satisfying Assumption \ref{ass:covariance-basic}. We consider the stochastic transport equation on $\mR^d$:
	\begin{equation}\label{eq:STE-S}
		\d f + \circ\d W \cdot \nabla f =0, \quad f|_{t=0} =f_0,
	\end{equation}
where $\circ \d$ stands for the Stratonovich stochastic differential. This form of noise is motivated by the Wong-Zakai principle, and it also preserves $L^p$-norms of solutions. Using the series expansion in \eqref{eq:noise-series-representation} of the noise $W$, equation \eqref{eq:STE-S} can be written more precisely as
  $$\d f +  \sum_k \sigma_k\cdot \nabla f\circ \d B^k_t =0. $$
For the sake of rigorous mathematical treatments, we need to transform the above equation to the It\^o form:
	\begin{equation}\label{eq:STE-I}
	\d f + \sum_k \sigma_k\cdot \nabla f\, \d B^k_t = \kappa \Delta f\,\d t,  \quad f|_{t=0} =f_0,
	\end{equation}
which can be deduced from equalities \eqref{eq:Q_0} and \eqref{eq:covariance-series-representation}. For simplicity, sometimes we also write \eqref{eq:STE-I} as
  $$\d f + \d W \cdot \nabla f = \kappa \Delta f\,\d t. $$
If $f_0\in L^p(\R^d)$ with $p\geq 1$, then the stochastic transport equation \eqref{eq:STE-I} admits a probabilistically strong solution $f$ satisfying $\|f_t\|_p \leq \|f_0\|_p$ $\bP$-a.s. for all $t\geq 0$; moreover, if in addition $f\in L^\infty_t L^1_x$, then the solution is also pathwise unique, see \cite[Theorem 1.3]{GalLuo}.

Inspired by \cite[Theorem 1.7]{FGL24}, we want to give a quantitative estimate between the solution of \eqref{eq:STE-I} and that of the heat equation
	\begin{equation}\label{eq:heat}
	    \partial_t \bar f = \kappa \Delta \bar f, \quad \bar{f} |_{t=0} =f_0.
	\end{equation}
Here is our first main result.

	\begin{theorem}\label{thm:STE}
		Assume $f_0\in L^p(\mR^d)$ for some $p\in (1,2]$.
\begin{itemize}
\item[\rm(1)] For any $\alpha\in \big(\frac{d}{2}, \frac{d}{2}+1 \big)$ and $q\geq 1$, it holds
\begin{equation*}
    \Big[ \bE \|f - \bar f\|_{C_T \dot H^{-\alpha}_x}^{q} \Big]^{1/q} \lesssim_{d, \alpha, q, \kappa, T} \|f_0\|_{p} \|\widehat{Q}\|_{p/(2-p)}^{1/2} .
\end{equation*}
\item[\rm(2)] For any $\alpha\in \big(d \big(\frac1p- \frac12 \big), \frac d2\big]$, $0<\varepsilon<\min\big\{1,  \alpha- d \big(\frac1p- \frac12 \big)\big\}$ and $q\geq 1$, we have
  $$
  \Big[ \bE \|f - \bar f\|_{C_T H^{-\alpha}_x}^{q} \Big]^{1/q}
  \lesssim_{d, \alpha,q,\kappa, T} \|f_0\|_{p} \|\widehat{Q}\|_{1}^{1/2- \varepsilon/(d+  4\varepsilon-2\alpha)}  \|\widehat{Q}\|_{p/(2-p)}^{\varepsilon/(d+ 4\varepsilon-2\alpha)} . $$
\end{itemize} 
\end{theorem}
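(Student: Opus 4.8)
The plan is to recognize that $u:=f-\bar f$ solves a linear equation driven purely by the noise. Subtracting \eqref{eq:heat} from the It\^o form \eqref{eq:STE-I} and using $\div\sigma_k=0$ to write $\sigma_k\cdot\nabla f=\div(\sigma_k f)$, one gets $\d u=\kappa\Delta u\,\d t-\sum_k\div(\sigma_k f)\,\d B^k$ with $u|_{t=0}=0$; hence by stochastic Duhamel
\[
u_t=-\sum_k\int_0^t \e^{\kappa(t-s)\Delta}\div(\sigma_k f_s)\,\d B^k_s ,
\]
a stochastic convolution whose integrand is the (adapted, but pathwise $L^p$-bounded) solution $f$. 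Thus the whole theorem reduces to a moment/maximal estimate for this stochastic convolution in negative Sobolev norms.

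First I would treat the fixed-time second moment, which already reveals the mechanism. By the It\^o isometry, $\bE\|u_t\|_{\dot H^{-\alpha}_x}^2=\bE\int_0^t\sum_k\|\e^{\kappa(t-s)\Delta}\div(\sigma_k f_s)\|_{\dot H^{-\alpha}_x}^2\,\d s$, and in Fourier variables the inner sum is $\int|\xi|^{-2\alpha}\e^{-2\kappa(t-s)|\xi|^2}\sum_k|\xi\cdot\widehat{\sigma_k f_s}(\xi)|^2\,\d\xi$. Here Proposition \ref{prop:sum-fourier-sigma} is the key tool: writing $\widehat{\sigma_k f_s}$ as a convolution, the factor $\delta(\xi-\eta)$ in the covariance collapses one integral and produces the kernel $\xi^T\widehat Q(\xi-\eta)\xi$. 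The crucial structural point is that, since $\widehat Q(\zeta)=g(\zeta)P_\zeta$ and $P_{\xi-\eta}\xi=P_{\xi-\eta}\eta$, one has $\xi^T\widehat Q(\xi-\eta)\xi=g(\xi-\eta)\,|P_{\xi-\eta}\eta|^2\le g(\xi-\eta)\min(|\xi|^2,|\eta|^2)$; this gain of two derivatives (the divergence form) is what allows negative-order norms. Hence $\bE\|u_t\|_{\dot H^{-\alpha}_x}^2\lesssim\int_0^t\int|\xi|^{-2\alpha}\e^{-2\kappa(t-s)|\xi|^2}\,|\xi|^2\,(g*|\widehat{f_s}|^2)(\xi)\,\d\xi\,\d s$, using $\min\le|\xi|^2$.

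For part (1) I would estimate by Young's convolution inequality in $\xi$ and H\"older in $\eta$: since Hausdorff--Young gives $\||\widehat{f_s}|^2\|_{p'/2}=\|\widehat{f_s}\|_{p'}^2\lesssim\|f_0\|_p^2$ and the conjugate exponent is exactly $(p'/2)'=p/(2-p)$, the dependence on $g$ enters precisely as $\|g\|_{p/(2-p)}\sim\|\widehat Q\|_{p/(2-p)}$; the remaining factor $\int|\xi|^{2-2\alpha}\e^{-2\kappa(t-s)|\xi|^2}\,\d\xi$ scales like $(t-s)^{\alpha-d/2-1}$, finite for $\alpha<d/2+1$ and $s$-integrable on $[0,t]$ exactly when $\alpha>d/2$. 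These two requirements pin down the range $\alpha\in(d/2,d/2+1)$. The passage to the $C_T$-norm and general $q\ge1$ I would obtain by the factorization method together with Burkholder--Davis--Gundy: the $q$-th moment of the factorized process reduces to the same Hilbert--Schmidt kernel, the singular weight $(t-s)^{-\beta}$ is absorbed using the slack $\alpha>d/2$, and the pathwise bound $\|f_s\|_p\le\|f_0\|_p$ makes every estimate deterministic; the case of small $q$ then follows from monotonicity of $L^q(\bP)$-norms.

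For part (2) the same scheme applies with $|\xi|^{-2\alpha}$ replaced by $\langle\xi\rangle^{-2\alpha}$, but now $\alpha\le d/2$ and the pure $\|\widehat Q\|_1$-estimate (Young at the endpoint exponent $1$), while integrable for $\alpha>d(\tfrac1p-\tfrac12)$, has a $g$-factor that does not vanish in the scaling limit (recall $\|g_\ell\|_1\equiv1$ in the Example) and so is useless for convergence. The remedy is to interpolate Young's inequality at an exponent $a$ slightly above $1$, writing $\|g\|_a\le\|g\|_1^{1-\lambda}\|g\|_{p/(2-p)}^{\lambda}$, which extracts a small positive power of the vanishing norm $\|\widehat Q\|_{p/(2-p)}$ at the cost of tightening the time-integrability threshold; balancing the frequency integral's $(t-s)$-scaling against the Young/H\"older admissibility produces the explicit exponents and the constraint $0<\varepsilon<\min\{1,\alpha-d(\tfrac1p-\tfrac12)\}$. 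The main obstacle, in both parts, is exactly this simultaneous bookkeeping: choosing the factorization exponent $\beta$ and the interpolation parameter so that the heat-kernel time integral stays integrable near $s=t$ in the borderline regime $\alpha\le d/2$, the Hausdorff--Young/Young exponents remain admissible for $p<2$, and a genuinely vanishing power of $\|\widehat Q\|_{p/(2-p)}$ survives.
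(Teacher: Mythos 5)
Your reduction of the theorem to a maximal estimate for the stochastic convolution, and your treatment of part (1), follow essentially the same route as the paper's Lemma \ref{lem:max}: Burkholder--Davis--Gundy (or It\^o isometry) in $\dot H^{-\alpha}$, the covariance identity of Proposition \ref{prop:sum-fourier-sigma} to collapse the sum over $k$, Hausdorff--Young forcing the exponent $p/(2-p)$ on $\widehat{Q}$, and the heat-kernel scaling that pins down $\alpha\in(\frac d2,\frac d2+1)$. The only differences are cosmetic: your refinement $\xi^T\widehat{Q}(\xi-\eta)\xi\leq g(\xi-\eta)\min(|\xi|^2,|\eta|^2)$ is correct but unused, and you upgrade to the $C_T$-norm by the factorization method where the paper proves increment bounds and invokes Kolmogorov's continuity theorem plus Sobolev embedding in time; both are standard and valid.

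For part (2) your route genuinely differs from the paper's. The paper proves \emph{two} separate maximal estimates --- Lemma \ref{lem:max-2} in $H^{-\alpha+\varepsilon}$, costing only $\|\widehat{Q}\|_{1}^{1/2}$ (obtained by sacrificing $\varepsilon$ of heat-semigroup smoothing and using ${\rm Tr}\,\widehat{Q}$), and Lemma \ref{lem:max} in $\dot H^{-d/2-\varepsilon}$, costing $\|\widehat{Q}\|_{p/(2-p)}^{1/2}$ --- and then interpolates between the two Sobolev regularities (Corollary \ref{cor-stoch-convol-1}); that spatial interpolation is exactly where the exponent $\theta/2=\varepsilon/(d+4\varepsilon-2\alpha)$ comes from. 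You instead do a single estimate, applying H\"older in $\xi$ with an exponent $b$ and Young's inequality with $\|g\|_a$, $\frac1a=\frac2p-\frac1b$, followed by Lebesgue-scale interpolation $\|g\|_a\leq\|g\|_1^{1-\lambda}\|g\|_{p/(2-p)}^{\lambda}$. This scheme does close, but the exponent bookkeeping you defer (``balancing \dots produces the explicit exponents'') is the nontrivial point and should be checked: the admissible window is $b\in\big(\frac{d}{2\alpha},\frac{p}{2-p}\big]$ (lower end from time-integrability of the heat weight, upper end from Hausdorff--Young), and as $b$ sweeps it the exponent $\frac{\lambda(b)}{2}=\frac{p-2+p/b}{4(p-1)}$ sweeps the interval $\big[0,\frac{pA}{2d(p-1)}\big)$, where $A=\alpha-d(\frac1p-\frac12)$. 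Since the theorem's exponents satisfy $\frac{\varepsilon}{d+4\varepsilon-2\alpha}\leq\frac{pA}{2pA+2d(p-1)}<\frac{pA}{2d(p-1)}$ when $A\leq 1$ (and an analogous strict inequality, checkable by concavity in $\alpha$, when $A>1$), every claimed exponent pair is attained by a suitable choice of $b$, so your method proves the stated result --- indeed a slightly wider family of estimates. The trade-off between the two routes: the paper's modular lemmas are reused elsewhere (Lemma \ref{lem:max-2} and the mixed interpolation also feed Corollary \ref{cor-stoch-convol-2}, which is what the 2D Euler theorem needs), whereas your one-shot argument is shorter and more self-contained but must be redone for each norm.
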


The strategy of proofs, as in \cite{FGL24}, is rewriting both equations \eqref{eq:STE-I} and \eqref{eq:heat} in mild form, and noting that the difference $f_t - \bar f_t$ is nothing but the stochastic convolution
  $$Z_t= \int_0^t {\rm e}^{\kappa (t-s)\Delta}(\d W_s \cdot \nabla f_s) = \sum_k \int_0^t {\rm e}^{\kappa (t-s)\Delta}(\sigma_k \cdot \nabla f_s)\,\d B^k_s. $$
Therefore, the key step is to establish (maximal) estimates in suitable norms of $\{Z_t \}_{t\in [0,T]}$; this will be done in Section \ref{sec-stoch-convol}.

\subsection{Stochastic 2D Euler equations}
	
In this part, we consider the stochastic 2D Euler equations in vorticity form:
	\begin{equation}\label{eq:SEE-S}
		\d \omega + u\cdot\nabla\omega\, \d t + \circ \d W\cdot\nabla \omega=0,  \quad \omega|_{t=0} =\omega_0,
	\end{equation}
	where $\omega$ and $u$ are the fluid vorticity and velocity fields, respectively; and $W=W(t,x)$ is now a space-time Gaussian vector field on $\R^2$. The corresponding It\^o equation reads as
	\begin{equation}\label{eq:SEE-I}
		\d \omega + u\cdot\nabla\omega\, \d t + \d W\cdot\nabla \omega = \kappa \Delta\omega \,\d t,  \quad \omega|_{t=0} =\omega_0.
	\end{equation}
This model has received lots of attention in the last decade, for instance, in the setting of torus $\T^2$, see \cite{BFM16} for strong well-posedness with bounded vorticity $\omega_0$, \cite{FGL21} for weak existence with $L^2$-initial data, and \cite{FlanLuo19} for studies on white noise solutions and the associated Fokker-Planck equation. Recently, there are some well-posedness results on the full space $\R^2$, see \cite[Proposition 4.2]{GalLuo} for initial vorticity $\omega_0 \in L^1_x \cap L^p_x \cap \dot H^{-1}_x$ with $p\ge 2$. Coghi and Maurelli \cite{CogFla} studied stochastic 2D Euler equations \eqref{eq:SEE-S} driven by nonsmooth Kraichnan noise, i.e. the function $g$ in \eqref{eq:covariance-basic} is given by $g(\xi)=(1+ |\xi|^2)^{-(2 +\gamma)/2}$ for some $\gamma\in (0,2)$. Exploiting anomalous regularizing properties of the noise, they proved existence of weak solutions for all $\omega_0 \in \dot H^{-1}_x$; moreover, pathwise uniqueness of solutions was also shown for $\omega_0 \in L^1_x \cap L^p_x\cap \dot H^{-1}_x$ with suitable $\gamma\in (0,2)$ and $p>3/2$, see the short note \cite{JiaoLuo} for extension to the case $p>1$. In Section \ref{subsec-existence} below, we will follow the arguments in \cite{GalLuo} to show weak existence of \eqref{eq:SEE-I} for initial data in $L^p_x$ with $p> 4/3$, and general transport noise satisfying Assumption \ref{ass:covariance-basic}.

	We want to prove that the solution of \eqref{eq:SEE-I} is close to that of the $2$D Naiver-Stokes equation in vorticity form:
	\begin{equation}\label{eq:NS}
		\partial_t \bar{\omega} + \bar{u}\cdot\nabla \bar{\omega}= \kappa \Delta \bar{\omega}, \quad \quad \bar{\omega} |_{t=0} =\omega_0.
	\end{equation}
The following result is an analogue of \cite[Theorem 1.1]{FGL24}.

	\begin{theorem}\label{thm:SEE}
		Let $p\in \l(\sqrt{2}, 2 \r)$ and
		$\alpha\in \big(2-p, 2-\frac{2}{p} \big)$.
  Assume $\omega_0\in L^p_x$ and that Assumption \ref{ass:covariance-basic} holds. Let $\omega$ be a weak solution to \eqref{eq:SEE-I} and $\bar{\omega}$ the unique solution to \eqref{eq:NS} in $C_TL^p_x$. Then for any $q>p'$,  it holds that
		\begin{equation}
			\l[ \bE \|\omega-\bar{\omega}\|_{C_T \dot H^{-\alpha}_x}^q \r]^{1/q} \leq C_1 \|\omega_0 \|_{p} \|\widehat{Q}\|_{1}^{\theta/2} \|\widehat{Q}\|_{p/(2-p)}^{(1- \theta)/2},
		\end{equation}
        where $C_1$ depends on $\alpha, p,q, T$ and $\theta= p'(1-\alpha +\varepsilon)/2$ for some small $\varepsilon>0$. 
	\end{theorem}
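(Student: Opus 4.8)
The plan is to follow the mild-formulation strategy outlined after Theorem~\ref{thm:STE}, now accounting for the extra nonlinear drift. Writing both \eqref{eq:SEE-I} and \eqref{eq:NS} in mild form and using that $\sigma_k$, $u$ and $\bar u$ are divergence free (so that $\sigma_k\cdot\nabla\omega=\div(\sigma_k\omega)$ and $u\cdot\nabla\omega-\bar u\cdot\nabla\bar\omega=\div(u\omega-\bar u\bar\omega)$), the difference $R_t:=\omega_t-\bar\omega_t$ satisfies, after cancellation of the common initial term $\e^{\kappa t\Delta}\omega_0$,
\begin{equation*}
  R_t=-\int_0^t \e^{\kappa(t-s)\Delta}\,\div\big(u_s\omega_s-\bar u_s\bar\omega_s\big)\,\d s-Z_t,
  \qquad
  Z_t:=\sum_k\int_0^t \e^{\kappa(t-s)\Delta}\,\div(\sigma_k\omega_s)\,\d B^k_s .
\end{equation*}
Thus the estimate splits into a stochastic-convolution contribution $Z$ and a deterministic nonlinear remainder, and the whole argument rests on the a priori bound $\sup_{t\leq T}\|\omega_t\|_p\leq\|\omega_0\|_p$ (transport noise preserves $L^p$-norms of the vorticity) together with the analogous bound for $\bar\omega$ in $C_TL^p_x$.

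Second, I would estimate $Z$ in $C_T\dot H^{-\alpha}_x$ by invoking the maximal inequalities developed in Section~\ref{sec-stoch-convol} for exactly this type of convolution, the same ones underlying Theorem~\ref{thm:STE}. Applying the BDG/It\^o-isometry step and using Proposition~\ref{prop:sum-fourier-sigma} to replace $\sum_k\widehat{\sigma_k}(\xi)\overline{\widehat{\sigma_k}(\eta)^{T}}$ by $\widehat Q(\xi)\delta(\xi-\eta)$, one bounds $\big(\bE\|Z\|_{C_T\dot H^{-\alpha}_x}^q\big)^{1/q}$ by $\|\omega_0\|_p$ times a power of $\widehat Q$; here the requirement $q>p'$ enters through the time integrability needed to run the maximal estimate with only $L^p$-control on $\omega_s$. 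Two such bounds are available --- a crude one costing $\|\widehat Q\|_1^{1/2}$ and a sharper one costing $\|\widehat Q\|_{p/(2-p)}^{1/2}$ --- and interpolating between them in the admissible range $\alpha\in(2-p,2-\tfrac2p)$ produces precisely the factor $\|\widehat Q\|_1^{\theta/2}\|\widehat Q\|_{p/(2-p)}^{(1-\theta)/2}$ with $\theta=p'(1-\alpha+\varepsilon)/2$.

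Third, I would control the nonlinear remainder, observing that it is in fact \emph{linear} in $R$. Splitting $u\omega-\bar u\bar\omega=(u-\bar u)\omega+\bar u\,R$ and using the Biot--Savart law $u-\bar u=K\ast R$ (so that $u-\bar u$ is one derivative smoother than $R$, lying in $\dot H^{1-\alpha}_x$ with $1-\alpha>0$ since $\alpha<1$ throughout the given range), one estimates the two products in a negative Sobolev norm by paraproduct/H\"older inequalities, using $\omega,\bar\omega\in L^p_x$ and the Calder\'on--Zygmund/Sobolev bounds for $\bar u=K\ast\bar\omega$. Combined with the heat-kernel smoothing $\|\e^{\kappa(t-s)\Delta}\div(g)\|_{\dot H^{-\alpha}_x}\lesssim(t-s)^{-1/2}\|g\|_{\dot H^{-\alpha}_x}$, this yields
\begin{equation*}
  \Big\|\int_0^t \e^{\kappa(t-s)\Delta}\div\big(u_s\omega_s-\bar u_s\bar\omega_s\big)\,\d s\Big\|_{\dot H^{-\alpha}_x}
  \lesssim \|\omega_0\|_p\int_0^t (t-s)^{-1/2}\,\|R_s\|_{\dot H^{-\alpha}_x}\,\d s ,
\end{equation*}
with an integrable singularity because $\tfrac12<1$.

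Finally, I would close the argument by a singular Gr\"onwall inequality: feeding the above into the identity for $R_t$ gives $\|R\|_{C_T\dot H^{-\alpha}_x}\lesssim\|Z\|_{C_T\dot H^{-\alpha}_x}$, with the implicit constant depending on $\|\omega_0\|_p$, $\kappa$ and $T$ through the advection coefficients, and taking $L^q(\Theta)$-norms transfers the bound on $Z$ to $R$, producing the asserted estimate. The main obstacle, I expect, is the nonlinear product step: making the two products $(u-\bar u)\omega$ and $\bar u\,R$ land in $\dot H^{-\alpha}_x$ with a constant controlled only through $\|\omega_0\|_p$ forces a delicate balance of Sobolev exponents, and it is exactly the constraints $p\in(\sqrt2,2)$, $\alpha\in(2-p,2-\tfrac2p)$, the smallness of $\varepsilon$, and $q>p'$ that make every embedding, the singular kernel $(t-s)^{-1/2}$, and the interpolation exponent $\theta\in[0,1]$ simultaneously admissible.
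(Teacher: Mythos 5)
Your skeleton --- mild formulation, the splitting $u\omega-\bar u\bar\omega=(u-\bar u)\omega+\bar u(\omega-\bar\omega)$, a Gr\"onwall closure, and the application of the interpolated maximal estimate (Corollary \ref{cor-stoch-convol-2} with $d=2$, which indeed gives $\theta=p'(1-\alpha+\varepsilon)/2$) to the stochastic convolution --- coincides with the paper's proof. The genuine gap is your third step: the displayed bound
\begin{equation*}
  \Big\|\int_0^t \e^{\kappa(t-s)\Delta}\div\big(u_s\omega_s-\bar u_s\bar\omega_s\big)\,\d s\Big\|_{\dot H^{-\alpha}_x}
  \lesssim \|\omega_0\|_p\int_0^t (t-s)^{-1/2}\,\|R_s\|_{\dot H^{-\alpha}_x}\,\d s
\end{equation*}
is false for both products, for dimensional reasons. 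For the first product it would require $\|(u_s-\bar u_s)\omega_s\|_{\dot H^{-\alpha}}\lesssim \|R_s\|_{\dot H^{-\alpha}}\|\omega_s\|_p$; testing this on rescalings $f\mapsto f(\lambda\,\cdot)$ shows the two sides scale as $\lambda^{-\alpha-2}$ and $\lambda^{-\alpha-1-2/p}$ respectively, so the inequality can only hold when $p=2$. (Equivalently, by duality it would need the product map $\dot H^{\alpha}\times\dot H^{1-\alpha}\to L^{p'}$ to be bounded, but sharp Sobolev embedding only lands the product in $L^2$, and $p'>2$.) The paper instead measures this term in the strictly weaker norm $\dot H^{1-\alpha-2/p}$ (i.e.\ $L^{2p/(\alpha p+2)}$ after embedding), which costs the stronger heat singularity $(t-s)^{-1/p}$ --- and this is exactly where the hypothesis $q>p'$ is used: $q>p'$ is equivalent to $q'/p<1$, which makes $(t-s)^{-q'/p}$ integrable in the H\"older-in-time step leading to \eqref{eq:aprox1}. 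That your version would make $q>p'$ essentially superfluous is a symptom of the loss.

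For the term $\bar u_s R_s$ the obstruction is of the same nature but the fix is deeper: $R_s\in\dot H^{-\alpha}$ is a distribution of negative order, and multiplying it by $\bar u_s$ while remaining in $\dot H^{-\alpha}$ requires $\bar u_s$ to act as a multiplier on $\dot H^{\alpha}$, i.e.\ to possess roughly $\alpha$ derivatives; Calder\'on--Zygmund bounds coming from $\bar\omega_s\in L^p$ alone cannot provide this (again the constant $\|\omega_0\|_p$ is dimensionally impossible). The paper's proof handles it via the duality identity \eqref{eq:prod-H} together with the fractional Leibniz rule \eqref{eq:D-prod} and the commutator $\Gamma_\alpha$ of \eqref{eq:Gamma}, yielding $\|\bar u_s R_s\|_{\dot H^{-\alpha}}\lesssim \|R_s\|_{\dot H^{-\alpha}}\|\bar\omega_s\|_{H^{\alpha,p}}$. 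Crucially the weight is $\|\bar\omega_s\|_{H^{\alpha,p}}$, not $\|\omega_0\|_p$: it blows up like $s^{-\alpha/2}$ as $s\to 0$ and is only integrable in time, which is precisely why the paper proves separately (Proposition \ref{prop-2D-NSE}) that the 2D Navier--Stokes solution with $L^p$ data satisfies $\|\nabla\bar\omega_t\|_p\lesssim t^{-1/2}\|\omega_0\|_p$, hence $\bar\omega\in L^{2/\alpha}_T H^{\alpha,p}$. This parabolic-regularity ingredient and the Leibniz/commutator estimate built on it are the technical heart of the argument and are absent from your proposal; the phrase ``paraproduct/H\"older inequalities'' does not supply them, since it is the extra smoothness of $\bar\omega$, not its $L^p$ bound, that legitimizes the multiplication. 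A minor further correction: the maximal estimates for $Z$ in Section \ref{sec-stoch-convol} hold for every $q\geq 1$, so $q>p'$ does not enter through the stochastic convolution as you state, but only through the deterministic Gr\"onwall step just described.
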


We conclude this section by describing the structure of the paper. In Section \ref{sec-prelim}, we collect some preparatory results needed in the sequel. We prove in Section \ref{sec-stoch-convol} some maximal estimates on the stochastic convolutions which immediately imply Theorem \ref{thm:STE}. Then we provide in Section \ref{subsec-existence} a weak existence result for the stochastic 2D Euler equation \eqref{eq:SEE-I}, and finally we prove Theorem \ref{thm:SEE} in the last section.

	\section{Preparations}\label{sec-prelim}

    The following lemma is a classical result for regularity properties of heat semigroup $\{\e^{\kappa \Delta t}\}_{t\geq 0}$; we remark that the same results hold also in homogeneous Sobolev spaces $\dot H^{\alpha}$.
    
    \begin{lemma}\label{lem:HKE}
    	Let $u\in H^{\alpha}$, $\alpha\in\mathbb{R}$. Then
    	\begin{enumerate}[{\rm (i)}]
    	    \item for any $\rho\geq0$,
    		\begin{equation}\label{eq:hke1}
    			\|\e^{\kappa\Delta t} u
    			\|_{H^{\alpha+\rho}} \lesssim_{d, \kappa, \rho}  t^{-\rho/2} \|u\|_{H^{\alpha}};
    		\end{equation}
    		\item for any $\rho\in[0,2]$,
    		\begin{equation}\label{eq:hke2}
    			\|(I-\e^{\kappa \Delta t}) u\|_{H^{\alpha-\rho}}\lesssim_{d,\kappa,\rho} t^{\rho/2} \|u\|_{H^{\alpha}}.
    		\end{equation}
    	\end{enumerate}
    \end{lemma}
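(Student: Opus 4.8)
The proof rests on the fact that the heat semigroup $\e^{\kappa\Delta t}$ acts as the Fourier multiplier $\xi\mapsto\e^{-\kappa t|\xi|^2}$, so that by Plancherel's theorem both estimates reduce to uniform pointwise bounds on scalar multipliers. The plan is to write each Sobolev norm on the frequency side using the defining formula $\|u\|_{H^\alpha}^2=\int_{\R^d}\langle\xi\rangle^{2\alpha}|\widehat u(\xi)|^2\,\d\xi$, factor out the multiplier, and then extract the claimed power of $t$ from an elementary one-variable inequality, treating (i) and (ii) separately.

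For part (i), I would write
\[ \|\e^{\kappa\Delta t}u\|_{H^{\alpha+\rho}}^2 = \int_{\R^d} \langle\xi\rangle^{2\alpha}\,\langle\xi\rangle^{2\rho}\,\e^{-2\kappa t|\xi|^2}\,|\widehat u(\xi)|^2\,\d\xi, \]
so it suffices to show $\langle\xi\rangle^{2\rho}\e^{-2\kappa t|\xi|^2}\lesssim_{d,\kappa,\rho} t^{-\rho}$ uniformly in $\xi$; pulling this supremum out and taking square roots then gives the factor $t^{-\rho/2}$. Setting $r=|\xi|^2$, the homogeneous weight leads to the scaling identity $r^\rho\e^{-2\kappa t r}=t^{-\rho}(tr)^\rho\e^{-2\kappa (tr)}$, and since $s\mapsto s^\rho\e^{-2\kappa s}$ is bounded on $[0,\infty)$ with maximum at $s=\rho/(2\kappa)$, one reads off the factor $t^{-\rho}$ with a constant depending only on $\rho,\kappa$; this is exactly the homogeneous statement noted in the remark. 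For the inhomogeneous weight $\langle\xi\rangle^{2\rho}=(1+|\xi|^2)^\rho$ I would split frequencies into $|\xi|\le 1$ and $|\xi|>1$ (or equivalently restrict to the working interval $t\in(0,T]$), the low-frequency part contributing only a harmless bounded factor.

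For part (ii), the multiplier becomes $1-\e^{-\kappa t|\xi|^2}$, and the same Plancherel computation reduces the claim to the pointwise bound $(1-\e^{-\kappa t|\xi|^2})^2\lesssim t^\rho\langle\xi\rangle^{2\rho}$. Here I would use the elementary inequality $1-\e^{-x}\le x^{\rho/2}$, valid for every $x\ge 0$ precisely when $\rho/2\in[0,1]$: indeed $1-\e^{-x}\le\min\{1,x\}\le x^{\rho/2}$ as soon as $0\le\rho/2\le 1$. Applying it with $x=\kappa t|\xi|^2$ yields $1-\e^{-\kappa t|\xi|^2}\le(\kappa t)^{\rho/2}|\xi|^\rho\le(\kappa t)^{\rho/2}\langle\xi\rangle^\rho$, which squares to the desired bound and contributes precisely the factor $t^{\rho/2}$ after taking square roots.

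The computations are routine; the only points requiring genuine care are the choice of exponent in the inequality $1-\e^{-x}\le x^{\rho/2}$ — which both explains and forces the restriction $\rho\le 2$ in (ii) — and, in (i), the fact that the clean scaling $t^{-\rho}$ is available only for the homogeneous weight, so for the inhomogeneous space the low-frequency regime must either be absorbed into a $T$-dependent constant or handled by the elementary frequency splitting above. No deeper obstacle is expected.
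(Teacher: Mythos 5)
Your proof is correct: the paper does not prove this lemma at all (it is cited as a classical property of the heat semigroup), and your Plancherel/Fourier-multiplier argument — the scaling bound $\sup_{s\ge 0}s^{\rho}\e^{-2\kappa s}<\infty$ for (i) and the elementary inequality $1-\e^{-x}\le\min\{1,x\}\le x^{\rho/2}$ for $\rho\in[0,2]$ in (ii) — is exactly the standard proof. Your observation about the inhomogeneous weight in (i) is also the right one to flag: the constant there genuinely must depend on an upper bound for $t$ (the estimate fails as $t\to\infty$ in $H^{\alpha}$, as the $\xi=0$ frequency shows), which is harmless in the paper since $t$ ranges over $[0,T]$ throughout.
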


    For any $\alpha\in  (0,2)$, up to a multiplicative constant,
    $$
    \Lambda^{\alpha}u(x):= (-\Delta)^{\frac{\alpha}{2}}u(x) = -c_{d,\alpha}\, \mbox{P.V.} \int_{\mR^d}\frac{u(x+z)-u(x)}{|z|^{d+\alpha}}\d z.
    $$
    If we write
    \begin{equation}\label{eq:Gamma}
    	\Gamma_\alpha(u,v)(x):=c_{d,\alpha} \int_{\mR^d}(u(x+y)-u(x))(v(x+y)-v(x))\frac{\d y}{|y|^{d+\alpha}},
    \end{equation}
    then
    \begin{align}\label{eq:D-prod}
    	\Lambda^{\alpha}(uv)= (\Lambda^{\alpha}u) \, v+u\, \Lambda^{\alpha}v-\Gamma_\alpha(u,v).
    \end{align}

    The following result can be found in \cite[Theorem 2.36]{BCD}.
    \begin{lemma}
    	For any $p\in (1,\infty)$ and $\alpha\in (0,1)$, it holds that
    	\begin{equation}\label{eq:dif-Lp}
    		\|u(\cdot+y)-u(\cdot)\|_p \leq C |y|^\alpha \left\| \Lambda^{\alpha} u\right\|_p.
    	\end{equation}
    \end{lemma}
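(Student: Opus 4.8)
The plan is to represent the finite difference $u(\cdot+y)-u(\cdot)$ as a convolution against a difference of Riesz kernels, and to show that this convolution kernel is integrable with $L^1$-norm of order $|y|^\alpha$; the desired bound then follows immediately from Young's inequality (which is why the full range of exponents causes no trouble once the kernel estimate is in hand). We may assume $g:=\Lambda^\alpha u\in L^p$, since otherwise the right-hand side is infinite and there is nothing to prove.

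First I would invert $\Lambda^\alpha$ by the Riesz potential. Writing $K_\alpha(x)=c_{d,\alpha}\,|x|^{\alpha-d}$ (well defined since $0<\alpha<1\le d$), one has $\widehat{K_\alpha}(\xi)=c'\,|\xi|^{-\alpha}$, so on the Fourier side $\widehat u(\xi)=c'|\xi|^{-\alpha}\widehat g(\xi)$, i.e. $u=K_\alpha * g$ up to the normalization implicit in $\Lambda^\alpha$. Consequently
\[
u(x+y)-u(x)=\int_{\mR^d}\big(K_\alpha(x+y-z)-K_\alpha(x-z)\big)\,g(z)\,\d z=(L_y * g)(x),
\]
where $L_y(x):=K_\alpha(x+y)-K_\alpha(x)$. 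I would justify this identity first for $u\in\cS$, where the Fourier computation is rigorous and the constants match, and then extend it to every $u$ with $\Lambda^\alpha u\in L^p$ by density, using that $g\mapsto L_y * g$ is continuous on $L^p$ once we know $L_y\in L^1$.

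The heart of the matter is the bound $\|L_y\|_{1}\lesssim_{d,\alpha}|y|^\alpha$, which I would obtain by splitting the integral at $|x|=2|y|$. On the near region $|x|\le 2|y|$ I bound the two terms separately; since $\alpha>0$ the kernel $|x|^{\alpha-d}$ is locally integrable and
\[
\int_{|x|\le 2|y|}\!\!\big(|x+y|^{\alpha-d}+|x|^{\alpha-d}\big)\,\d x\lesssim\int_{|x|\lesssim|y|}|x|^{\alpha-d}\,\d x\sim |y|^{\alpha}.
\]
On the far region $|x|>2|y|$ I use the mean value theorem together with $|\nabla K_\alpha(x)|\lesssim|x|^{\alpha-d-1}$ and $|x+ty|\ge|x|/2$ for $t\in[0,1]$, which gives $|L_y(x)|\lesssim|y|\,|x|^{\alpha-d-1}$; since $\alpha<1$ the exponent is integrable at infinity and
\[
\int_{|x|>2|y|}|y|\,|x|^{\alpha-d-1}\,\d x\sim|y|\int_{2|y|}^\infty r^{\alpha-2}\,\d r\sim|y|^{\alpha}.
\]
Combining the two regions yields $\|L_y\|_1\lesssim|y|^\alpha$, and then Young's inequality gives $\|u(\cdot+y)-u(\cdot)\|_p=\|L_y*g\|_p\le\|L_y\|_1\,\|g\|_p\lesssim|y|^\alpha\|\Lambda^\alpha u\|_p$, with a constant depending only on $d$ and $\alpha$.

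The main obstacle is precisely this kernel estimate, and it is here that both constraints on $\alpha$ enter sharply: local integrability of $|x|^{\alpha-d}$ near the origin forces $\alpha>0$, while integrability of the mean-value bound $|x|^{\alpha-d-1}$ at infinity forces $\alpha<1$. A secondary technical point is the justification of the representation $u=K_\alpha*g$ for general $u$, since the Riesz potential inverts $\Lambda^\alpha$ only modulo polynomials in the distributional setting; I would handle this by reducing to Schwartz functions and passing to the limit, which is legitimate because convolution with the $L^1$ kernel $L_y$ is continuous on $L^p$. As an alternative route one could first reduce to $|y|=1$ via the scaling $u\mapsto u(|y|\,\cdot)$, under which both sides scale by the same power of $|y|$, making the $|y|^\alpha$ homogeneity transparent; this still reduces to the same fixed-scale kernel bound.
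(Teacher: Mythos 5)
Your argument is correct, but note that the paper itself gives no proof of this lemma: it is quoted directly from \cite[Theorem 2.36]{BCD}, where the estimate arises as one half of the finite-difference characterization of homogeneous Besov norms, proved via Littlewood--Paley decomposition together with a Bernstein-type embedding giving $\|u\|_{\dot B^{\alpha}_{p,\infty}}\lesssim \|\Lambda^{\alpha}u\|_{L^p}$. Your route is genuinely different and more elementary: the representation $u(\cdot+y)-u(\cdot)=L_y*\Lambda^{\alpha}u$ with $L_y=K_\alpha(\cdot+y)-K_\alpha$, the two-region kernel estimate $\|L_y\|_{L^1}\lesssim_{d,\alpha}|y|^{\alpha}$ (local integrability of $|x|^{\alpha-d}$ using $\alpha>0$ near the origin, the mean-value bound $|y|\,|x|^{\alpha-d-1}$ using $\alpha<1$ at infinity), and Young's inequality are all sound, and this argument even yields the endpoint cases $p=1,\infty$, whereas the Besov machinery behind the cited theorem buys the reverse inequality and the whole scale of $\dot B^{s}_{p,r}$ norms, none of which the paper needs. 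The one caveat---which you flag yourself---is that the identity $u=K_\alpha*\Lambda^{\alpha}u$, and indeed the inequality itself, cannot hold for arbitrary tempered distributions with $\Lambda^{\alpha}u\in L^p$: adding a nonconstant affine function to $u$ annihilates the right-hand side (the principal-value formula for $\Lambda^{\alpha}$ kills affine functions by symmetry) while making the left-hand side infinite, so the statement must be read on a suitable realization, e.g.\ $u\in L^p$ with $\Lambda^{\alpha}u\in L^p$, or $u$ a Riesz potential of an $L^p$ function. Since the paper only ever applies the estimate to $u=\Lambda^{-\alpha}\varphi$ with $\varphi\in L^2$ and to the Biot--Savart velocity, your representation covers exactly the cases actually used, and the same caveat is equally implicit in the paper's citation.
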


    We also need to use some fundamental properties of Lorentz spaces.  For more detailed explanations, we refer to the monograph \cite{Gra08}.  
    \begin{definition}
        For $1\leq p,q < \infty$, the Lorentz space $L^{p,q}(X,\mu )$ is the space of complex-valued measurable functions $f$ on a measure space $(X,\mu)$ such that the following quasi-norm
	\[
	    \|f\|_{L^{p,q}(X,\mu )}= p^{\frac{1}{q}} \l\{\int_0^\infty 
             t^{q-1} \l[\mu(\{x: |f(x)|>t\}) \r]^{\frac{q}{p}} ~ \d t \r\}^{\frac{1}{q}}
	\]
	is finite.  When $q=\infty$, we set 
	\[
           \|f\|_{{L^{{p,\infty }}(X,\mu )}} =\sup _{{t>0}} t \l[\mu ( \left\{x:|f(x)|>t\right\}) \r]^{\frac{1}{p}}.
        \]
        It is also conventional to set $L^{\infty,\infty}(X, \mu) = L^{\infty}(X, \mu)$.
    \end{definition}
	
	\begin{proposition}\label{prop:Lorentz}
		\begin{enumerate}[{\rm (i)}]
			\item (H\"older's inequality) Assume that $1\leq p, p_1, p_2\leq \infty$, $1\leq q, q_1, q_2\leq \infty$, then 
			\begin{equation}\label{Eq:Holder}
				\|fg\|_{L^{p,q}} \leq C \|f\|_{L^{p_1,q_1}} \|g\|_{L^{p_2, q_2}}, 
			\end{equation}
			where 
			\[
			    \frac{1}{p}=\frac{1}{p_1}+\frac{1}{p_2} ~\mbox{ and }~  \frac{1}{q}=\frac{1}{q_1}+\frac{1}{q_2}; 
			\]
			\item (Young's inequality) Assume that $1< p, p_1, p_2<\infty$, $1\leq q, q_1, q_2\leq \infty$, then 
			\begin{equation}\label{Eq:Young1}
				\|f*g\|_{L^{p,q}}\leq C \|f\|_{L^{p_1,q_1}} \|g\|_{L^{p_2, q_2}}, 
			\end{equation}
			where 
			\[
			    1+\frac{1}{p}=\frac{1}{p_1}+\frac{1}{p_2}, \quad  \frac{1}{q}\leq \frac{1}{q_1}+\frac{1}{q_2}; 
			\]
            and if $1<p_1, p_2<\infty, 1\leq q_1, q_2 \leq \infty$, then
            \begin{equation}\label{Eq:Young2}
                \|f * g\|_{L^{\infty}} \leq C\|f\|_{L^{p_1, q_1}}\|g\|_{L^{p_2, q_2}},
            \end{equation}
            where 
            \[
            \frac{1}{p_1}+\frac{1}{p_2}=1  ~\mbox{ and }~  \frac{1}{q_1}+\frac{1}{q_2}\geq 1. 
            \]
		\end{enumerate}
	\end{proposition}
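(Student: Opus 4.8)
The plan is to reduce all three inequalities to properties of the decreasing rearrangement $f^{*}$, defined from the distribution function $d_f(s)=\mu(\{|f|>s\})$ by $f^{*}(t)=\inf\{s>0:d_f(s)\le t\}$. The starting point is the classical identity, obtained from the layer-cake formula and Fubini, that up to the normalizing constant
\[
\|f\|_{L^{p,q}}\simeq\Big(\int_0^\infty\big(t^{1/p}f^{*}(t)\big)^q\,\frac{\d t}{t}\Big)^{1/q}\qquad(q<\infty),
\]
while $\|f\|_{L^{p,\infty}}=\sup_{t>0}t^{1/p}f^{*}(t)$. I would also record two elementary facts: the rearrangement is invariant under the measure-preserving translations $x\mapsto x-y$ on $\R^d$, so $\|f(x-\cdot)\|_{L^{p,q}}=\|f\|_{L^{p,q}}$; and the second index is monotone, i.e. $L^{p,q}\hookrightarrow L^{p,r}$ whenever $q\le r$.

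For H\"older's inequality (i) the key pointwise bound is $(fg)^{*}(t)\le f^{*}(t/2)\,g^{*}(t/2)$: indeed $|f|\le f^{*}(t/2)$ off a set of measure $\le t/2$ and likewise for $g$, so $|fg|$ exceeds $f^{*}(t/2)g^{*}(t/2)$ only on a set of measure $\le t$. Inserting this into the displayed norm and rescaling $t=2s$ converts the left-hand side into $\int_0^\infty\big(s^{1/p_1}f^{*}(s)\big)^q\big(s^{1/p_2}g^{*}(s)\big)^q\,\frac{\d s}{s}$, using $\frac1p=\frac1{p_1}+\frac1{p_2}$; the ordinary H\"older inequality in this $\frac{\d s}{s}$-integral with conjugate exponents $q_1/q$ and $q_2/q$, legitimate since $\frac1q=\frac1{q_1}+\frac1{q_2}$, then yields \eqref{Eq:Holder}. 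Cases with an infinite index follow from the analogous supremum estimate.

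The endpoint Young inequality \eqref{Eq:Young2} follows at once. Using translation invariance of the rearrangement and H\"older's inequality (i) with $\frac1{p_1}+\frac1{p_2}=1$,
\[
\Big|\int f(x-y)g(y)\,\d y\Big|\le\|f(x-\cdot)\,g\|_{L^{1,1}}\le C\,\|f(x-\cdot)\,g\|_{L^{1,q}}\le C\,\|f\|_{L^{p_1,q_1}}\|g\|_{L^{p_2,q_2}},
\]
where $\frac1q=\frac1{q_1}+\frac1{q_2}\ge1$ (so $q\le1$), the middle inequality being the nesting $L^{1,q}\hookrightarrow L^{1,1}=L^1$ and the last being H\"older's inequality (i); the resulting bound is uniform in $x$.

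For the general Young inequality \eqref{Eq:Young1}, which is the main obstacle, I would invoke O'Neil's convolution rearrangement inequality
\[
(f*g)^{**}(t)\le t\,f^{**}(t)\,g^{**}(t)+\int_t^\infty f^{*}(s)g^{*}(s)\,\d s,\qquad f^{**}(t):=\tfrac1t\int_0^t f^{*}(s)\,\d s,
\]
and substitute it (via $(f*g)^{*}\le(f*g)^{**}$) into the rearrangement formula for $\|f*g\|_{L^{p,q}}$. This splits the estimate into two pieces, each controlled by a Hardy-type inequality in the $\frac{\d t}{t}$-integral; the same H\"older-in-$\frac{\d s}{s}$ step as in (i) then produces $\|f\|_{L^{p_1,q_1}}\|g\|_{L^{p_2,q_2}}$, now under $1+\frac1p=\frac1{p_1}+\frac1{p_2}$ and $\frac1q\le\frac1{q_1}+\frac1{q_2}$. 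The delicate points are that the Hardy inequalities require $p>1$ and careful bookkeeping of the second indices, while the inequality (rather than equality) in $\frac1q\le\frac1{q_1}+\frac1{q_2}$ reflects the slack furnished by the monotonicity embedding. Alternatively, since convolution is bounded $L^{p_1}\times L^{p_2}\to L^{p}$ at the endpoints and $L^{p,q}=(L^{a},L^{b})_{\theta,q}$ is a real interpolation space, one may deduce \eqref{Eq:Young1} from the bilinear real interpolation theorem, trading the explicit O'Neil computation for an abstract interpolation input; full details for all three inequalities are in \cite{Gra08}.
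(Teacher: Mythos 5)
The paper does not actually prove this proposition: it is presented as a collection of classical facts about Lorentz spaces, with the proof outsourced to the monograph \cite{Gra08}, so there is no in-paper argument to compare against. Your proposal supplies the standard textbook proof --- the rearrangement characterization of the quasi-norm, the pointwise bound $(fg)^{*}(t)\leq f^{*}(t/2)\,g^{*}(t/2)$ for H\"older, translation invariance plus the nesting $L^{1,q}\hookrightarrow L^{1,1}$ for the endpoint Young inequality, and O'Neil's convolution inequality combined with Hardy's inequalities (or, alternatively, bilinear real interpolation) for \eqref{Eq:Young1} --- and this is essentially the chain of arguments carried out in the cited reference, so what you have done is fill in the work the paper chose to cite. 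The sketch is correct, with two fine points worth flagging. First, in your proof of \eqref{Eq:Young2} you invoke part (i) with target space $L^{1,q}$ where $\frac1q=\frac1{q_1}+\frac1{q_2}\geq 1$, i.e.\ $q\leq 1$, which lies outside the range $q\geq 1$ in the statement of \eqref{Eq:Holder}; this is harmless because your rearrangement proof of (i) --- H\"older with conjugate exponents $q_1/q$ and $q_2/q$, which are both $\geq 1$ whenever $\frac1q=\frac1{q_1}+\frac1{q_2}$ --- works verbatim for $q<1$, but that extension should be stated explicitly rather than cited as ``(i)''. Second, the proof of \eqref{Eq:Young1} remains a sketch whose technical substance (O'Neil's inequality, the two Hardy inequalities, and absorbing the slack $\frac1q\leq\frac1{q_1}+\frac1{q_2}$ via the monotonicity embedding) is asserted rather than executed; this is acceptable for a classical result, and you correctly locate where the hypotheses $1<p_1,p_2<\infty$ enter (Hardy's inequality for $f^{**}$ requires the exponent of $t$ to be negative, and the dual Hardy inequality for the tail integral requires $p<\infty$), which is precisely the bookkeeping the paper's citation hides.
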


\section{Quantitative estimates for stochastic transport equations} \label{sec-stoch-convol}

The first part of this section consists of some maximal estimates on stochastic convolutions in both homogeneous and nonhomogeneous spaces on $\R^d$, then they are applied to yield quantitative estimates between the solutions of stochastic transport equations and those of deterministic heat equation. 

\subsection{ Maximal estimates for stochastic convolutions}

Let 
\[
    P^\kappa_t= \e^{\kappa t \Delta}, \quad  h_t^{\kappa}=(4\kappa\pi t)^{-\frac{d}{2}} \e^{-\frac{|x|^2}{4\kappa t}}, \quad t\geq 0
\]
and $f: \Omega\times[0, T]\times\mR^d \to \mR$ be a predictable stochastic process. We define the stochastic convolution
\begin{equation*}
    \begin{aligned}
    Z_t:= Z_{0,t}, \quad Z_{s,t}:=\int_s^t P^{\kappa}_{t-r}(\d W_r \cdot \nabla f_r)= \int_s^t P^{\kappa}_{t-r} \nabla \cdot (f_r\, \d W_r)= \sum_{k,i} \int_s^t \p_i h^\kappa_{t-r} *(f_r\sigma_k^i) \, \d B^k_r,
    \end{aligned}
\end{equation*}
as long as the right-hand side term of the above formula can be defined. Here we have used the divergence free property of the noise. 

We will prove the following maximal estimates.
\begin{lemma}\label{lem:max}
    Assume $p\in [1,2]$ and that
    \begin{equation}\label{eq:f-preserve}
    \bP\mbox{-a.s.}, \quad  \sup_{t\in[0,T]}\|f_t\|_p\leq R.
    \end{equation}
Then for any $\alpha\in \big(\frac{d}{2}, \frac{d}{2}+1 \big)$ and $q\geq 1$, it holds
    \begin{equation}
        \bigg[ \bE \sup_{t\in [0,T]}\|Z_t\|_{\dot H^{-\alpha}}^{q} \bigg]^{1/q} \lesssim_{q,d, \alpha, \kappa, T} R\, \|\widehat{Q}\|_{p/(2-p)}^{1/2}.
    \end{equation}
\end{lemma}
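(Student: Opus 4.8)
The plan is to use the stochastic factorization method of Da Prato--Kwapie\'n--Zabczyk to trade the supremum over $t$ for an ordinary $L^m$-in-time norm of an auxiliary process, thereby reducing everything to a single-time second-moment bound on the integrand. Writing $\d M_r:=\nabla\cdot(f_r\,\d W_r)$ so that $Z_t=\int_0^tP^\kappa_{t-r}\,\d M_r$, I fix $\beta\in(0,1/2)$ and introduce
\[
  Y_r:=\int_0^r(r-s)^{-\beta}P^\kappa_{r-s}\,\d M_s .
\]
The Beta-function identity $\int_s^t(t-r)^{\beta-1}(r-s)^{-\beta}\,\d r=\pi/\sin(\pi\beta)$, the semigroup property and stochastic Fubini give the factorization $Z_t=\tfrac{\sin(\pi\beta)}{\pi}\int_0^t(t-r)^{\beta-1}P^\kappa_{t-r}Y_r\,\d r$. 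Since $P^\kappa_\tau$ is a contraction on $\dot H^{-\alpha}$, H\"older in time yields $\sup_{t\le T}\|Z_t\|_{\dot H^{-\alpha}}\lesssim\big(\int_0^T\|Y_r\|_{\dot H^{-\alpha}}^m\,\d r\big)^{1/m}$ provided $\beta>1/m$; after the Hilbert-space Burkholder--Davis--Gundy inequality for $Y_r$ and Minkowski's integral inequality (valid once $q\ge m$), the whole problem collapses to estimating, for fixed $s$ and $\tau=r-s$, the Hilbert--Schmidt-type quantity $\sum_k\|P^\kappa_\tau\nabla\cdot(f_s\sigma_k)\|_{\dot H^{-\alpha}}^2$.

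The heart of the matter, and the step I expect to be the main obstacle, is this single-time bound. On the Fourier side $\widehat{P^\kappa_\tau\nabla\cdot(f_s\sigma_k)}(\xi)=\e^{-\kappa\tau|\xi|^2}(\mathrm i\,\xi)\cdot\widehat{f_s\sigma_k}(\xi)$, and writing the product as a convolution $\widehat{f_s\sigma_k}=(2\pi)^{-d/2}\widehat{f_s}*\widehat{\sigma_k}$ lets me sum over $k$ using Proposition \ref{prop:sum-fourier-sigma}: the cross terms collapse through $\sum_k\widehat{\sigma_k}(\zeta)\overline{\widehat{\sigma_k}(\zeta')^T}=\widehat Q(\zeta)\delta(\zeta-\zeta')$, leaving
\[
  \sum_k\big|\xi\cdot\widehat{f_s\sigma_k}(\xi)\big|^2=(2\pi)^{-d}\int_{\R^d}|\widehat{f_s}(\xi-\zeta)|^2\,\xi^T\widehat Q(\zeta)\xi\,\d\zeta\le(2\pi)^{-d}|\xi|^2\big(|\widehat{f_s}|^2*g\big)(\xi),
\]
where I used $\xi^T\widehat Q(\zeta)\xi=g(\zeta)\,\xi^TP_\zeta\xi\le g(\zeta)|\xi|^2$ from \eqref{eq:covariance-basic}. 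Integrating against $|\xi|^{-2\alpha}\e^{-2\kappa\tau|\xi|^2}$ and splitting by H\"older with exponents $(1,\infty)$, I bound the convolution in $L^\infty$ by H\"older's (endpoint Young) inequality \eqref{Eq:Young2}, namely $\||\widehat{f_s}|^2*g\|_\infty\le\||\widehat{f_s}|^2\|_{p'/2}\,\|g\|_{p/(2-p)}$ (the exponents $p'/2$ and $p/(2-p)$ being conjugate since $p\in[1,2]$), and then invoke Hausdorff--Young, $\||\widehat{f_s}|^2\|_{p'/2}=\|\widehat{f_s}\|_{p'}^2\lesssim\|f_s\|_p^2\le R^2$. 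The remaining deterministic factor $\int_{\R^d}|\xi|^{2-2\alpha}\e^{-2\kappa\tau|\xi|^2}\,\d\xi$ is finite precisely when $\alpha<d/2+1$, and the rescaling $\xi\mapsto(\kappa\tau)^{-1/2}\xi$ evaluates it as $C(\kappa\tau)^{-(d/2+1-\alpha)}$. Altogether this produces the key estimate
\[
  \sum_k\|P^\kappa_\tau\nabla\cdot(f_s\sigma_k)\|_{\dot H^{-\alpha}}^2\lesssim(\kappa\tau)^{-(d/2+1-\alpha)}R^2\,\|\widehat Q\|_{p/(2-p)},
\]
where $\|g\|_{p/(2-p)}$ is comparable to $\|\widehat Q\|_{p/(2-p)}$ because $P_\zeta$ has unit operator norm.

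Finally I would assemble the pieces. Inserting the single-time bound into the BDG estimate for $Y_r$ generates the deterministic time integral $\int_0^r(r-s)^{-2\beta}(\kappa(r-s))^{-(d/2+1-\alpha)}\,\d s$, which converges iff $2\beta<\alpha-d/2$; since $\alpha>d/2$ this leaves room to pick $\beta\in\big(1/m,(\alpha-d/2)/2\big)$ with $m$ large, reconciling the two constraints $\beta>1/m$ (factorization) and $2\beta<\alpha-d/2$ (time integrability). This is exactly where the full hypothesis $\alpha\in(d/2,d/2+1)$ is used. The resulting bound $\bE\|Y_r\|_{\dot H^{-\alpha}}^q\lesssim\big(R^2\|\widehat Q\|_{p/(2-p)}\big)^{q/2}$ is uniform in $r\in[0,T]$, with a constant depending on $T,\kappa,\alpha,q$, so feeding it back through Minkowski delivers the claim for every $q\ge m$; for $1\le q<m$ it follows at once from monotonicity of the $L^q(\Theta)$-norms. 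Care must be taken only to justify the stochastic Fubini step and the applicability of BDG with the singular kernel $(r-s)^{-\beta}$, but both are routine once $\beta<1/2$ and the integrability above are in hand.
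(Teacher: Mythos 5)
Your proposal is correct, and the heart of it --- the single-time Hilbert--Schmidt bound --- coincides with the paper's: the same Fourier-side computation, the same collapse of $\sum_k$ via Proposition \ref{prop:sum-fourier-sigma}, the same H\"older/Hausdorff--Young step with conjugate exponents $p/(2(p-1))$ and $p/(2-p)$, and the same spatial integral whose convergence forces $\alpha<\frac d2+1$. Where you genuinely diverge is in upgrading the single-time moment bound to a maximal estimate. The paper does this by hand: it bounds $Z_{s,t}$ the same way as $Z_t$, writes $Z_t=P^\kappa_{t-s}Z_s+Z_{s,t}$, uses the heat-semigroup smoothing of Lemma \ref{lem:HKE} to get a H\"older-in-time moment estimate on increments, and then invokes Kolmogorov's continuity theorem together with the Sobolev embedding $W^{s,q}(0,T;\dot H^{-\alpha})\hookrightarrow C^{0,\gamma}([0,T];\dot H^{-\alpha})$ for large $q$. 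You instead use the Da Prato--Kwapie\'n--Zabczyk factorization, which replaces the increment analysis and the Kolmogorov/embedding step by the Beta-function identity, the contraction property of $P^\kappa_\tau$ on $\dot H^{-\alpha}$, and a H\"older-plus-Minkowski argument; your constraint bookkeeping ($2\beta<\alpha-\frac d2$ for time integrability, $\beta>1/m$ for the factorization, large $q$ first and then Jensen for $1\le q<m$) is consistent, and since $\alpha-\frac d2<1$ you automatically stay in the regime $\beta<1/2$. What each approach buys: yours is shorter and avoids the two-parameter increment estimate, and path continuity still comes for free from the regularizing property of the operator $\phi\mapsto\int_0^t(t-r)^{\beta-1}P^\kappa_{t-r}\phi(r)\,\d r$ on $L^m_t$; the paper's route yields explicit pathwise H\"older regularity of $Z$ in $\dot H^{-\alpha}$ as a byproduct, which the factorization does not directly display. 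Two minor points you should make explicit: (i) the paper first reduces to $f_t(\cdot,\theta)\in\cS$ (with constants independent of this reduction) precisely so that the distributional identity of Proposition \ref{prop:sum-fourier-sigma} and the interchange of sums, integrals and expectations are legitimate --- your stochastic Fubini step needs the same reduction; (ii) since the a.s.\ bound \eqref{eq:f-preserve} makes your quadratic-variation estimate deterministic, the Minkowski step for $Y$ is actually not needed there, only for the outer time integral in the factorization, exactly as you use it.
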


\begin{proof}
  To avoid the complicated discussion of exchanging orders of limit and integration, we additionally assume that $f_t(\cdot,\theta)\in \cS$ for $(\d t\times \bP)$-a.e. $(t,\theta)\in [0,T]\times \Theta$, but the constants in our estimates below will not depend on the smoothness assumption on $f$.
  
  For $q\ge 1$, by the definition of $Z_t$, 
  \begin{equation}\label{proof-stoch-convol-1}
  \aligned
  \big[ \bE\|Z_t\|_{\dot H^{-\alpha}}^{2q} \big]^{1/q} 
  &= \bigg[ \bE \Big\| \sum_k \int_0^t P^\kappa_{t-r}\nabla\cdot (f_r \sigma_k )\, \d B^k_r \Big\|_{\dot H^{-\alpha}}^{2q} \bigg]^{1/q} \\ 
  &\lesssim_q \bigg[ \bE \Big( \sum_k \int_0^t \big\| P^\kappa_{t-r}\nabla\cdot (f_r \sigma_k ) \big\|_{\dot H^{-\alpha}}^2 \, \d r \Big)^q  \bigg]^{1/q} ,
  \endaligned
  \end{equation}
where in the second step we have used the Burkholder-Davis-Gundy inequality in the Hilbert space $\dot H^{-\alpha}$. Noting that 
  $$\aligned
  \big\| P^\kappa_{t-r}\nabla\cdot (f_r \sigma_k ) \big\|_{\dot H^{-\alpha}}^2 
  &= \int_{\R^d} |\xi|^{-2\alpha} \big|\mathcal F\big( P^\kappa_{t-r}\nabla\cdot (f_r \sigma_k ) \big)(\xi) \big|^2\, \d\xi \\
  &= \int_{\R^d} |\xi|^{-2\alpha} {\rm e}^{-2\kappa |\xi|^2(t-r) } \big|\xi\cdot \mathcal F(f_r \sigma_k )(\xi) \big|^2\, \d\xi \\
  &= (2\pi)^{-d} \int_{\R^d} |\xi|^{-2\alpha} {\rm e}^{-2\kappa |\xi|^2 (t-r)} \big|\xi\cdot (\widehat f_r\ast \widehat{\sigma_k})(\xi) \big|^2\, \d\xi ,
  \endaligned $$
therefore,
  $$\aligned
  \big[ \bE\|Z_t\|_{\dot H^{-\alpha}}^{2q} \big]^{1/q} 
  &\lesssim_{d,q} \bigg[ \bE \Big( \sum_k \int_0^t\! \int_{\R^d} |\xi|^{-2\alpha} {\rm e}^{-2\kappa |\xi|^2 (t-r)} \big|\xi\cdot (\widehat f_r\ast \widehat{\sigma_k})(\xi) \big|^2\, \d\xi \d r \Big)^q  \bigg]^{1/q} \\
  &= \bigg[ \bE \Big(  \int_{\R^d} |\xi|^{-2\alpha} \int_0^t {\rm e}^{-2\kappa |\xi|^2 (t-r)} \sum_k \big|\xi\cdot (\widehat f_r\ast \widehat{\sigma_k})(\xi) \big|^2\, \d r \d\xi \Big)^q \bigg]^{1/q}.
  \endaligned $$
By Proposition \ref{prop:sum-fourier-sigma}, we see that
  \begin{equation}\label{proof-stoch-convol-2}
  \aligned
      \sum_k |\xi\cdot (\widehat f_r\ast \widehat{\sigma_k})(\xi)|^2 &=\lim_{N\to\infty} \sum_{k=0}^N \iint_{\mR^d\times\mR^d} \widehat{f_r}(\xi-\eta)~ \xi^T \widehat{\sigma_k}(\eta) \overline{\widehat{\sigma_k}(\zeta)^T} \xi ~ \overline{\widehat{f_r} (\xi-\zeta)} ~\d \eta \d \zeta \\
      &= \int_{\mR^d} \xi^{T}\widehat{Q}(\eta) \xi\, |\widehat{f_r}(\xi-\eta)|^2~\d \eta. 
      \endaligned
  \end{equation}
Plugging this into the inequality above, we get 
  \begin{equation*}
  \big[ \bE\|Z_t\|_{\dot H^{-\alpha}}^{2q} \big]^{1/q} \lesssim \bigg[ \bE \Big(  \int_{\R^d} |\xi|^{-2\alpha} \int_0^t {\rm e}^{-2\kappa |\xi|^2 (t-r)} \xi^T \big(|\widehat f_r|^2 \ast \widehat{Q} \big)(\xi) \xi \, \d r \d\xi \Big)^q \bigg]^{1/q} .
  \end{equation*}

To estimate the right-hand side, we apply Young's inequality: for some $a\geq 1$ to be determined later and any $\xi\in \mR^d$,
  $$\big|\big(|\widehat f_r|^2 \ast \widehat{Q} \big)(\xi) \big| \leq \big\| |\widehat f_r|^2 \big\|_{a} \|\widehat{Q}\|_{a'} = \|\widehat f_r\|_{2a}^2 \|\widehat{Q}\|_{a'} \leq \|f_r\|_{p}^2 \|\widehat{Q}\|_{a'},$$
where in the last step we have used that the Fourier transform $\cF: L^p \to L^{p'}$ is a bounded mapping ($p\in [1,2]$); therefore, we need $2a=p'$ which implies $a= p/(2(p-1))$ and $a'= p/(2-p)$. Combining this estimate with \eqref{eq:f-preserve} yields
  $$\aligned
   \big[ \bE\|Z_t\|_{\dot H^{-\alpha}}^{2q} \big]^{1/q} 
   &\lesssim \int_{\R^d} |\xi|^{-2\alpha+2} \int_0^t {\rm e}^{-2\kappa |\xi|^2 (t-r)} R^2 \|\widehat{Q}\|_{p/(2-p)} \, \d r \d\xi \\
   &=  R^2 \|\widehat{Q}\|_{p/(2-p)} \int_{\R^d} |\xi|^{-2\alpha} \frac1{2\kappa} \big(1- {\rm e}^{-2\kappa |\xi|^2 t} \big)\, \d\xi  \\
   &\le  R^2 \|\widehat{Q}\|_{p/(2-p)} \int_{\R^d} |\xi|^{-2\alpha} \big[(2\kappa)^{-1}\wedge (|\xi|^2 t) \big]\, \d\xi .
  \endaligned $$
Using spherical coordinates, one has
  $$\aligned
  & \int_{\mR^d} |\xi|^{-2\alpha} \big[(2 \kappa)^{-1}\wedge (|\xi|^2 t) \big] \,\d\xi \\
  &= c_d \int_0^\infty \rho^{-2\alpha} \big[(2 \kappa)^{-1}\wedge (\rho^2 t) \big] \rho^{d-1} \,\d\rho \\
  &\lesssim_{d, \alpha} t \int_0^{(2\kappa t)^{-1/2}} \rho^{-2\alpha + d+1} \,\d\rho + \kappa^{-1} \int_{(2 \kappa t)^{-1/2}}^\infty \rho^{-2\alpha + d-1} \,\d\rho \\
  &\lesssim_{d, \alpha} \kappa^{\alpha-d/2-1} t^{\alpha-d/2},
  \endaligned $$
thanks to the constraint $\alpha\in \big(\frac d2, 1+\frac d2 \big)$.  Substituting this estimate into the above inequality yields
  $$\big[ \bE\|Z_t\|_{\dot H^{-\alpha}}^{q} \big]^{1/q} \leq \big[ \bE\|Z_t\|_{\dot H^{-\alpha}}^{2q} \big]^{1/2q} \lesssim_{q,d, \alpha, \kappa} R \|\widehat{Q}\|_{p/(2-p)}^{1/2} t^{(2\alpha -d)/4}. $$

Recalling the definition of $Z_{s,t}$, in the same way, we can prove
  $$\big[ \bE\|Z_{s,t} \|_{\dot H^{-\alpha}}^{q} \big]^{1/q} \lesssim_{q,d, \alpha, \kappa} R \|\widehat{Q}\|_{p/(2-p)}^{1/2} |t-s|^{(2\alpha -d)/4}. $$
In fact, we have the simple identity: $Z_t = P^\kappa_{t-s} Z_s + Z_{s,t}$; therefore, taking a small $\varepsilon>0$ such that $\alpha- \varepsilon \in (\frac d2, \frac d2 +1)$, by Lemma \ref{lem:HKE}, it holds
  $$\aligned
  \|Z_t - Z_s\|_{\dot H^{-\alpha}} &\leq \|(I- P^\kappa_{t-s}) Z_s \|_{\dot H^{-\alpha}} + \|Z_{s,t}\|_{\dot H^{-\alpha}} \\
  &\lesssim_{d, \varepsilon } |t-s|^{\varepsilon/2}\|Z_s \|_{\dot H^{-\alpha+ \varepsilon}} + \|Z_{s,t}\|_{\dot H^{-\alpha}}.
  \endaligned $$
Combining with the above results, we obtain
  $$\aligned
  \big[ \bE\|Z_t - Z_s \|_{\dot H^{-\alpha}}^{q} \big]^{1/q} &\lesssim_{q,d, \alpha, \kappa} |t-s|^{\varepsilon/2} \big[ \bE\| Z_s \|_{\dot H^{-\alpha+\varepsilon}}^{q} \big]^{1/q} + \big[ \bE\|Z_{s,t} \|_{\dot H^{-\alpha}}^{q} \big]^{1/q} \\
  &\lesssim_T R \|\widehat{Q}\|_{p/(2-p)}^{1/2} \big(|t-s|^{\varepsilon/2} + |t-s|^{(2\alpha -d)/4}\big).
  \endaligned $$
As a result, for $\varepsilon_0 = \min\{\varepsilon/2, (2\alpha -d)/4\}>0$, we have
\begin{align}\label{esti-Z}
\bE\|Z_t - Z_s \|_{\dot H^{-\alpha}}^{q} \lesssim_{q,d, \alpha, \kappa, T} R^q \|\widehat{Q}\|_{p/(2-p)}^{q/2} |t-s|^{q \varepsilon_0}.
\end{align}
We can take $q$ big enough such that $q \varepsilon_0>1$, then Kolmogorov's continuity theorem implies that $\{Z_t\}_{t\in [0,T]}$ admits a version with continuous paths in $\dot H^{-\alpha}$, which will be denoted for simplicity with the same notation.

Moreover, recalling  the estimate \eqref{esti-Z} holds for any $q\geq 1$, we have  that
\begin{align*}
 \bE\|Z\|_{W^{s, q}(0,T; \dot H^{-\alpha})}^q \lesssim_{q,d, \alpha, \kappa, T} R^q \|\widehat{Q}\|_{p/(2-p)}^{q/2}
\end{align*}
for any  $0<s<\varepsilon_0$, which,  by the Sobolev  embedding theorem for large $q\ge 1$,  implies that
\begin{align*}
 \bE\|Z\|_{C^{0, \gamma}([0,T]; \dot H^{-\alpha})}^q\lesssim_{q,d, \alpha,\kappa, T} R^q \|\widehat{Q}\|_{p/(2-p)}^{q/2}
\end{align*}
holds for any $0 < \gamma <\varepsilon_0.$
In particular, it holds
  $$\bE\bigg[\sup_{t\in [0,T]} \|Z_t \|_{\dot H^{-\alpha}}^{q} \bigg] \lesssim_{q,d, \alpha,\kappa, T} R^q \|\widehat{Q}\|_{p/(2-p)}^{q/2}, $$
which leads to the desired estimate.
\end{proof}

Next we give a maximal estimate of the stochastic convolution in nonhomogeneous Sobolev spaces with parameter close to the critical value $\frac{d}{p}-\frac{d}{2}$.

\begin{lemma}\label{lem:max-2}
    	Assume \eqref{eq:f-preserve} holds for some $p\in (1,2]$.
    	 Then for any $\frac{d}{p}-\frac{d}{2} < \alpha \leq \frac{d}{2}$, one has
    		\begin{equation}
    			\bigg[ \bE \sup_{t\in [0,T]}\|Z_t\|_{H^{-\alpha}}^{q} \bigg]^{1/q}
    			\lesssim_{ d, \alpha,q,\kappa, T} R\, \|\widehat{Q}\|_{1}^{1/2}.
    		\end{equation}
    \end{lemma}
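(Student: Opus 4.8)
The plan is to run the argument of Lemma~\ref{lem:max} almost verbatim, replacing the homogeneous weight $|\xi|^{-2\alpha}$ by the nonhomogeneous weight $\langle\xi\rangle^{-2\alpha}$; the one genuinely new ingredient is that the right-hand side must carry $\|\widehat{Q}\|_{1}$ rather than $\|\widehat{Q}\|_{p/(2-p)}$. As in Lemma~\ref{lem:max} I would first reduce to smooth $f$, apply the Burkholder--Davis--Gundy inequality in the Hilbert space $H^{-\alpha}$ exactly as in \eqref{proof-stoch-convol-1}, pass to Fourier variables, and invoke Proposition~\ref{prop:sum-fourier-sigma} together with \eqref{proof-stoch-convol-2} to arrive, for each fixed $t\in[0,T]$, at
\[
\big[\bE\|Z_t\|_{H^{-\alpha}}^{2q}\big]^{1/q}\lesssim \bigg[\bE\Big(\int_{\R^d}\langle\xi\rangle^{-2\alpha}\int_0^t \e^{-2\kappa|\xi|^2(t-r)}\,\xi^{T}\big(|\widehat{f_r}|^2\ast\widehat{Q}\big)(\xi)\,\xi\,\d r\,\d\xi\Big)^q\bigg]^{1/q}.
\]

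The heart of the matter is to estimate this space--time integral so that only $\|\widehat{Q}\|_1=\|g\|_1$ appears. Since $\widehat{Q}(\eta)=g(\eta)P_\eta$ with $P_\eta$ an orthogonal projection, one has the pointwise bound $\xi^{T}\widehat{Q}(\eta)\xi\leq g(\eta)|\xi|^2$, hence $\xi^{T}(|\widehat{f_r}|^2\ast\widehat{Q})(\xi)\xi\leq |\xi|^2\,(g\ast|\widehat{f_r}|^2)(\xi)$. Writing $m_s(\xi):=\langle\xi\rangle^{-2\alpha}|\xi|^2\e^{-2\kappa|\xi|^2 s}$ and applying H\"older's inequality in $\xi$ with exponents $\big(\tfrac{p}{2-p},\tfrac{p'}{2}\big)$, followed by Young's inequality $L^1\ast L^{p'/2}\to L^{p'/2}$ and the Hausdorff--Young bound $\|\widehat{f_r}\|_{p'}\leq\|f_r\|_p\leq R$, I would bound the inner $\xi$-integral at time $r$ by $\|m_{t-r}\|_{p/(2-p)}\,\|g\|_1\,R^2$. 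Integrating in $r$ then reduces the whole estimate to showing that $\int_0^T\|m_s\|_{p/(2-p)}\,\d s<\infty$. This is the main obstacle, and it is precisely where the admissible range of $\alpha$ is forced: splitting the frequency integral at $|\xi|=1$, the low frequencies contribute an $s$-independent constant, while the high frequencies give $\|m_s\|_{p/(2-p)}\lesssim 1+s^{-\mu/\beta}$ with $\beta=\tfrac{p}{2-p}$ and $\mu=\beta(1-\alpha)+\tfrac d2$; the time integral over $[0,T]$ is finite exactly when $\mu<\beta$, i.e. when $\alpha>\tfrac dp-\tfrac d2$, which is the stated lower bound. (The upper bound $\alpha\leq\tfrac d2$ is simply the complementary range to Lemma~\ref{lem:max}.)

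This yields $\big[\bE\|Z_t\|_{H^{-\alpha}}^q\big]^{1/q}\lesssim_{d,\alpha,q,\kappa,T} R\,\|\widehat{Q}\|_1^{1/2}$ for each fixed $t$. To upgrade to the supremum I would repeat the final part of the proof of Lemma~\ref{lem:max} without essential change: the same computation applied to $Z_{s,t}$ gives $\int_0^{t-s}\|m_\tau\|_{p/(2-p)}\,\d\tau\lesssim|t-s|^{\,1-\mu_+/\beta}$ (with $\mu_+=\max(\mu,0)<\beta$), hence a positive power of $|t-s|$; combining this with the identity $Z_t=P^\kappa_{t-s}Z_s+Z_{s,t}$ and the nonhomogeneous version of Lemma~\ref{lem:HKE}\,(ii) applied to $(I-P^\kappa_{t-s})Z_s$ (choosing $\varepsilon>0$ small enough that $\alpha-\varepsilon>\tfrac dp-\tfrac d2$) produces an increment bound $\bE\|Z_t-Z_s\|_{H^{-\alpha}}^q\lesssim R^q\|\widehat{Q}\|_1^{q/2}|t-s|^{q\varepsilon_0}$ for some $\varepsilon_0>0$. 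Kolmogorov's continuity theorem followed by the Sobolev embedding in time, exactly as in Lemma~\ref{lem:max}, then delivers the claimed maximal estimate.
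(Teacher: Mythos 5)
Your proposal is correct, and it follows the paper's strategy in all structural respects: Burkholder--Davis--Gundy in $H^{-\alpha}$, Proposition~\ref{prop:sum-fourier-sigma} to resum over $k$, H\"older's inequality in $\xi$ with exponents $\big(\tfrac{p}{2-p},\tfrac{p'}{2}\big)$ combined with Young's convolution inequality and Hausdorff--Young to isolate $\|\widehat{Q}\|_{1}R^2$, and finally the decomposition $Z_t=P^\kappa_{t-s}Z_s+Z_{s,t}$ with Kolmogorov's continuity theorem and Sobolev embedding in time for the supremum. The one genuine difference is local, in how the heat semigroup is exploited: the paper first applies the smoothing estimate of Lemma~\ref{lem:HKE}(i) to produce the integrable singularity $(\kappa(t-s))^{-(1-\varepsilon)}$ and then runs the H\"older/Young step against the \emph{time-independent} weight $\langle\cdot\rangle^{-2(\alpha-\varepsilon)}$, whose membership in $L^{p/(2-p)}$ is precisely the condition $\alpha-\varepsilon>d\big(\tfrac1p-\tfrac12\big)$; you instead keep the exact Gaussian multiplier inside the time-dependent weight $m_s(\xi)=\langle\xi\rangle^{-2\alpha}|\xi|^2\e^{-2\kappa|\xi|^2 s}$ and extract the same constraint from the time-integrability of $\|m_s\|_{p/(2-p)}$ --- your arithmetic ($\mu<\beta$ iff $\alpha>\tfrac dp-\tfrac d2$, with $\beta=\tfrac{p}{2-p}$, $\mu=\beta(1-\alpha)+\tfrac d2$) checks out. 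The two devices are interchangeable: yours is marginally sharper and more self-contained (no $\varepsilon$-loss in the fixed-time bound, and an explicit H\"older exponent $\tfrac12(1-\mu_+/\beta)$ for the increments), while the paper's is more modular, reusing Lemma~\ref{lem:HKE} as a black box and deferring all frequency analysis to a single time-independent integral; your handling of the supremum, including the choice of $\varepsilon$ so that $\alpha-\varepsilon>\tfrac dp-\tfrac d2$ when treating $(I-P^\kappa_{t-s})Z_s$, coincides with the paper's.
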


\begin{proof}
We follow the idea of proof of \cite[Lemma 2.5]{FGL24}. Again we additionally assume that $f_t(\cdot,\theta)\in \cS$ for $(\d t\times \bP)$-a.e. $(t,\theta)\in [0,T]\times \Theta$. By the definition of $Z_t$ and the Burkholder-Davis-Gundy inequality in Hilbert spaces, we have
  $$\aligned
  \big[ \bE\|Z_t \|_{H^{-\alpha}}^{2q} \big]^{1/q} &= \bigg[ \bE \Big\|\sum_k \int_0^t P^\kappa_{t-s}\nabla\cdot (f_s \sigma_k) \,\d B^k_s \Big\|_{H^{-\alpha}}^{2q} \bigg]^{1/q}\\
  &\lesssim_q \bigg[ \bE \Big(\sum_k \int_0^t \| P^\kappa_{t-s} \nabla\cdot (f_s \sigma_k)\|_{H^{-\alpha}}^2 \,\d s \Big)^q \bigg]^{1/q} \\
  &\lesssim \bigg[ \bE \Big(\sum_k \int_0^t \frac1{(\kappa |t-s|)^{1-\varepsilon}} \|\nabla\cdot (f_s \sigma_k)\|_{H^{-\alpha-1 +\varepsilon}}^2 \,\d s \Big)^q \bigg]^{1/q},
  \endaligned $$
where $\varepsilon\in (0,1)$ is a small number such that $\alpha-\varepsilon > d(\frac1p -\frac12)$ and we have used Lemma \ref{lem:HKE} in the last step. Moreover,
  \begin{equation}\label{lem:max-2-1}
  \aligned
  \big[ \bE\|Z_t \|_{H^{-\alpha}}^{2q} \big]^{1/q} &\lesssim \kappa^{\varepsilon-1} \bigg[ \bE \Big(\sum_k \int_0^t \frac1{|t-s|^{1-\varepsilon}} \|f_s \sigma_k \|_{H^{-\alpha +\varepsilon}}^2 \,\d s \Big)^q \bigg]^{1/q}.
  \endaligned
  \end{equation}

It remains to estimate $\sum_k \|f_s \sigma_k \|_{H^{-\alpha +\varepsilon}}^2$; using the notation $\<\xi \>= (1+|\xi|^2)^{1/2}$ we have
  $$\sum_k \|f_s \sigma_k \|_{H^{-\alpha +\varepsilon}}^2 = \sum_k \int_{\mR^d} \<\xi \>^{-2(\alpha-\varepsilon)} \big|\widehat{f_s \sigma_k} (\xi)\big|^2 \,\d\xi = \frac1{(2\pi)^{d}} \int_{\mR^d} \<\xi \>^{-2(\alpha-\varepsilon)} \sum_k \big|(\widehat f_s\ast \widehat \sigma_k) (\xi)\big|^2 \,\d\xi.  $$
Similarly to the proof of Lemma \ref{lem:max}, it holds
  $$\aligned
  \sum_k \big|(\widehat f_s\ast \widehat \sigma_k) (\xi)\big|^2
  &= \sum_k \iint_{\mR^d\times\mR^d} \widehat f_s(\xi-\eta) \overline{\widehat f_s(\xi-\zeta)}\, \widehat \sigma_k(\eta)\cdot \overline{\widehat \sigma_k(\zeta)} \,\d\eta\d \zeta \\
  &= \int_{\mR^d} |\widehat f_s(\xi-\eta)|^2\, {\rm Tr}(\widehat{Q}(\eta))\,\d\eta .
  \endaligned $$ 
Therefore,
  $$\aligned
  \sum_k \|f_s \sigma_k \|_{H^{-\alpha +\varepsilon}}^2 &\lesssim_d  \int_{\mR^d} \<\xi \>^{-2(\alpha-\varepsilon)} \big(|\widehat f_s|^2 \ast {\rm Tr}(\widehat{Q})\big)(\xi)\,\d\xi \\
  &\leq \big\| \<\cdot \>^{-2(\alpha-\varepsilon)} \big\|_b \big\| |\widehat f_s|^2 \ast {\rm Tr}(\widehat{Q}) \big\|_{b'} \\
  &\leq \big\| \<\cdot \>^{-2(\alpha-\varepsilon)} \big\|_b \big\| |\widehat f_s|^2 \big\|_{b'} \|{\rm Tr}(\widehat{Q}) \|_{1},
  \endaligned $$
where $\frac1b + \frac1{b'}=1$ and the last step is due to Young's inequality. We want that $\big\| |\widehat f_s|^2 \big\|_{b'} = \| \widehat f_s \|_{2b'}^2 \leq \|f_s \|_{p}^2$, namely, $2b'$ should be the conjugate number of $p$, thus $b=p/(2-p)$. Note that, in this case,
  $$\aligned
  \big\| \<\cdot \>^{-2(\alpha-\varepsilon)} \big\|_b &= \bigg[\int \<\xi \>^{-2(\alpha-\varepsilon)p/(2-p)} \,\d\xi \bigg]^{(2-p)/p} \\
  &\lesssim_{d,p} \bigg[\int_0^\infty (1+\rho^2)^{-(\alpha-\varepsilon)p/(2-p)} \rho^{d-1} \,\d\rho \bigg]^{(2-p)/p};
  \endaligned $$
the last integral is finite because $\frac{2p}{2-p}(\alpha-\varepsilon) -(d-1)>1$ which is equivalent to $\alpha-\varepsilon > d(\frac1p -\frac12)$. Summarizing these computations, we arrive at
  $$\sum_k \|f_s \sigma_k \|_{H^{-\alpha +\varepsilon}}^2 \lesssim_{d,p,\varepsilon} \|f_s \|_{p}^2 \|\widehat{Q}\|_{1}. $$
Substituting this estimate into \eqref{lem:max-2-1} and using \eqref{eq:f-preserve} yield
  $$\aligned
  \big[ \bE\|Z_t \|_{H^{-\alpha}}^{2q} \big]^{1/q} &\lesssim \kappa^{\varepsilon-1} \bigg[ \bE \Big( \int_0^t \frac1{|t-s|^{1-\varepsilon}} \|f_s \|_{p}^2 \|\widehat{Q}\|_{1} \,\d s \Big)^q \bigg]^{1/q} \\
  &\lesssim_\kappa R^2 \|\widehat{Q}\|_{1} \int_0^t \frac1{|t-s|^{1-\varepsilon}} \,\d s\\
  & \lesssim_\varepsilon R^2 \|\widehat{Q}\|_{1}\, t^\varepsilon.
  \endaligned $$
As a result,
  $$\big[ \bE\|Z_t \|_{H^{-\alpha}}^{q} \big]^{1/q} \leq \big[ \bE\|Z_t \|_{H^{-\alpha}}^{2q} \big]^{1/2q} \lesssim R \|\widehat{Q}\|_{1}^{1/2}\, t^{\varepsilon/2}. $$

In the same way, we have
  $$\big[ \bE\|Z_{s,t} \|_{H^{-\alpha}}^{q} \big]^{1/q} \lesssim R \|\widehat{Q}\|_{1}^{1/2}\, |t-s|^{\varepsilon/2}.$$
The remaining proofs are similar to that of Lemma \ref{lem:max} so we omit them.
\end{proof}

The following estimate on inhomogeneous Sobolev norms follows from interpolation.

\begin{corollary}\label{cor-stoch-convol-1}
Assume \eqref{eq:f-preserve} holds for some $p\in (1,2].$ For any
$\alpha\in \big(d(\frac1p- \frac12), \frac d2\big]$
 and $0<\varepsilon< \min \big\{1, \ \alpha- d(\frac1p- \frac12) \big\}$, we have
$$
\bigg[ \bE \sup_{t\in [0,T]}\|Z_t\|_{H^{-\alpha}}^{q}\bigg]^{1/q} \lesssim_{d, \alpha,q, \kappa, T} R \|\widehat{Q}\|_{1}^{1/2- \varepsilon/(d +4\varepsilon -2\alpha)}  \|\widehat{Q}\|_{p/(2-p)}^{\varepsilon/(d+4\varepsilon -2\alpha)} . $$
\end{corollary}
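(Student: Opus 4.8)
The plan is to obtain the claimed bound by interpolating the two maximal estimates already at hand. Lemma \ref{lem:max} controls the homogeneous norm $\|Z_t\|_{\dot H^{-\alpha_1}}$ for any exponent $\alpha_1\in\big(\tfrac d2,\tfrac d2+1\big)$ in terms of $R\,\|\widehat Q\|_{p/(2-p)}^{1/2}$, while Lemma \ref{lem:max-2} controls the inhomogeneous norm $\|Z_t\|_{H^{-\alpha_0}}$ for any $\alpha_0\in\big(d(\tfrac1p-\tfrac12),\tfrac d2\big]$ in terms of $R\,\|\widehat Q\|_1^{1/2}$. Since the target exponent satisfies $\alpha\le\tfrac d2$, it sits below the admissible range of Lemma \ref{lem:max}, so a direct application is impossible; the idea is to interpolate between a slightly larger homogeneous exponent $\alpha_1>\tfrac d2$ and a slightly smaller inhomogeneous one $\alpha_0<\alpha$, which is precisely what the auxiliary parameter $\varepsilon$ will govern.

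First I would record the elementary frequency-space interpolation inequality. For $\alpha_0<\alpha<\alpha_1$ put $\lambda=(\alpha-\alpha_0)/(\alpha_1-\alpha_0)\in(0,1)$, so that $\alpha=(1-\lambda)\alpha_0+\lambda\alpha_1$. Using $|\xi|\le\<\xi\>$ one has the pointwise bound $\<\xi\>^{-2\alpha}\le\<\xi\>^{-2(1-\lambda)\alpha_0}\,|\xi|^{-2\lambda\alpha_1}$, and splitting $|\widehat u|^2=|\widehat u|^{2(1-\lambda)}|\widehat u|^{2\lambda}$ together with Hölder's inequality in $\xi$ with exponents $\tfrac1{1-\lambda},\tfrac1\lambda$ gives
\[
\|u\|_{H^{-\alpha}}\le\|u\|_{H^{-\alpha_0}}^{1-\lambda}\,\|u\|_{\dot H^{-\alpha_1}}^{\lambda}.
\]

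Next I would fix the parameters to reproduce the exponents in the statement. Choosing $\alpha_1=\tfrac d2+\varepsilon$ and $\alpha_0=\alpha-\varepsilon$, one checks $\alpha_1\in\big(\tfrac d2,\tfrac d2+1\big)$ (because $0<\varepsilon<1$) and $\alpha_0\in\big(d(\tfrac1p-\tfrac12),\tfrac d2\big)$, the lower bound being exactly the hypothesis $\varepsilon<\alpha-d(\tfrac1p-\tfrac12)$ and the upper bound following from $\alpha\le\tfrac d2$. A direct computation then yields
\[
\lambda=\frac{\alpha-\alpha_0}{\alpha_1-\alpha_0}=\frac{\varepsilon}{\tfrac d2+2\varepsilon-\alpha}=\frac{2\varepsilon}{d+4\varepsilon-2\alpha}\in(0,1),
\]
so that $\tfrac\lambda2=\varepsilon/(d+4\varepsilon-2\alpha)$ and $\tfrac{1-\lambda}2=\tfrac12-\varepsilon/(d+4\varepsilon-2\alpha)$, matching the two powers in the claim.

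Finally I would apply the interpolation inequality to $u=Z_t$, take the supremum over $t\in[0,T]$ (the right-hand side factorises because both suprema are of nonnegative quantities), raise to the power $q$, and apply Hölder's inequality on $(\Theta,\bP)$ with exponents $\tfrac1{1-\lambda},\tfrac1\lambda$ to obtain
\[
\Big[\bE\sup_{t\in[0,T]}\|Z_t\|_{H^{-\alpha}}^q\Big]^{1/q}\le\Big[\bE\sup_{t\in[0,T]}\|Z_t\|_{H^{-\alpha_0}}^q\Big]^{(1-\lambda)/q}\Big[\bE\sup_{t\in[0,T]}\|Z_t\|_{\dot H^{-\alpha_1}}^q\Big]^{\lambda/q}.
\]
Inserting the bound $R\,\|\widehat Q\|_1^{1/2}$ from Lemma \ref{lem:max-2} into the first factor and $R\,\|\widehat Q\|_{p/(2-p)}^{1/2}$ from Lemma \ref{lem:max} into the second gives precisely $R\,\|\widehat Q\|_1^{(1-\lambda)/2}\|\widehat Q\|_{p/(2-p)}^{\lambda/2}$, which is the asserted estimate. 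There is no analytic difficulty beyond the two maximal estimates themselves; the only point that needs care is the parameter bookkeeping, namely verifying that the chosen $\alpha_1,\alpha_0$ remain inside the admissible ranges of the two lemmas while the interpolation weight $\lambda$ reproduces the stated exponents, and checking $d+4\varepsilon-2\alpha>0$ (immediate from $\alpha\le\tfrac d2$) so that $\lambda\in(0,1)$.
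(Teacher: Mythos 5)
Your proposal is correct and follows essentially the same route as the paper: both proofs apply Lemma \ref{lem:max-2} at exponent $\alpha-\varepsilon$ and Lemma \ref{lem:max} at exponent $\tfrac d2+\varepsilon$, interpolate with weight $\lambda=2\varepsilon/(d+4\varepsilon-2\alpha)$, and finish with H\"older's inequality in expectation. The only cosmetic difference is that the paper first downgrades $\|Z_t\|_{\dot H^{-d/2-\varepsilon}}$ to the weaker inhomogeneous norm $\|Z_t\|_{H^{-d/2-\varepsilon}}$ and then uses standard interpolation between inhomogeneous norms, whereas you fold that comparison (via $\<\xi\>\geq|\xi|$) directly into a mixed inhomogeneous--homogeneous interpolation inequality.
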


\begin{proof}
Since $\alpha-\varepsilon > d(\frac1p- \frac12)$, we can apply Lemma \ref{lem:max-2} to get
  $$\bigg[ \bE \sup_{t\in [0,T]}\|Z_t\|_{H^{-\alpha +\varepsilon}}^{q} \bigg]^{1/q} \lesssim_{d, \alpha,q, \kappa, T} R \|\widehat{Q}\|_{1}^{1/2} .$$
Next, note that the inhomogeneous Sobolev norms of negative order are weaker than the corresponding homogeneous ones; applying Lemma \ref{lem:max} with $\alpha= \frac d2+\varepsilon$ gives us
  $$\bigg[ \bE \sup_{t\in [0,T]}\|Z_t\|_{H^{-d/2 -\varepsilon}}^{q} \bigg]^{1/q} \leq \bigg[ \bE \sup_{t\in [0,T]}\|Z_t\|_{\dot H^{-d/2 -\varepsilon}}^{q} \bigg]^{1/q} \lesssim_{d, \alpha,q, \kappa, T} R \|\widehat{Q}\|_{p/(2-p)}^{1/2} .$$
Let $\theta=2\varepsilon/(d +4\varepsilon -2\alpha)$, we have
 $$
 -\alpha = (1- \theta )(-\alpha +\varepsilon) +\theta \Big(- \frac d2 -\varepsilon \Big),
 $$
and thus by interpolation,
  $$\|Z_t\|_{H^{-\alpha}} \leq \|Z_t\|_{H^{-\alpha+ \varepsilon}}^{1-\theta} \|Z_t\|_{H^{-d/2 -\varepsilon}}^{\theta}. $$
Therefore, by H\"older's inequality,
  $$\aligned
  \bE \sup_{t\in [0,T]}\|Z_t\|_{H^{-\alpha}}^{q}
  &\leq \bigg[\bE \sup_{t\in [0,T]}\|Z_t\|_{H^{-\alpha +\varepsilon}}^{q} \bigg]^{1- \theta} \bigg[ \bE \sup_{t\in [0,T]}\|Z_t\|_{H^{-d/2-\varepsilon}}^{q} \bigg]^{\theta}\\
  &\lesssim R^q \|\widehat{Q}\|_{1}^{q(1- \theta)/2}  \|\widehat{Q}\|_{p/(2-p)}^{q \theta/2},
  \endaligned $$
which leads to the desired result.
\end{proof}

We can also derive estimates on the homogeneous Sobolev norms of negative order, using a slightly more complicated interpolation; note that the range of $\alpha$ below is not empty since $p>1$.

\begin{corollary}\label{cor-stoch-convol-2}
Assume \eqref{eq:f-preserve} holds for some $p\in (1,2]$. For any $\alpha\in \big(\frac{d(d+2)(2-p)}{2(2p + d(2-p))}, \frac d2\big]$, we have
  $$\bigg[ \bE \sup_{t\in [0,T]}\|Z_t\|_{\dot H^{-\alpha}}^{q}\bigg]^{1/q} \lesssim_{d, \alpha,q,\kappa, T} R \|\widehat{Q}\|_{1}^{\theta/2} \|\widehat{Q}\|_{p/(2-p)}^{(1-\theta)/2} , $$
where $\theta= p'(\frac d2+\varepsilon -\alpha)/d= p(\frac d2+\varepsilon -\alpha)/d(p-1)$ for some small $\varepsilon>0$. 
\end{corollary}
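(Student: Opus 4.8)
The plan is to interpolate between the homogeneous estimate of Lemma~\ref{lem:max} and the inhomogeneous estimate of Lemma~\ref{lem:max-2}, but phrased entirely at the level of homogeneous norms so that the final bound lands on $\dot H^{-\alpha}_x$. The key obstacle, compared with Corollary~\ref{cor-stoch-convol-1}, is that Lemma~\ref{lem:max-2} produces an \emph{inhomogeneous} norm $\|\cdot\|_{H^{-\alpha+\varepsilon}}$, whereas the target norm $\|\cdot\|_{\dot H^{-\alpha}}$ is homogeneous; one cannot directly feed the inhomogeneous estimate into a homogeneous interpolation inequality. The natural fix is to bound the homogeneous norm at the low-frequency-sensitive exponent $-\alpha$ by interpolating between a homogeneous norm at a \emph{higher} regularity (where Lemma~\ref{lem:max} applies, giving the $\|\widehat Q\|_{p/(2-p)}^{1/2}$ factor) and a homogeneous norm at a \emph{lower} regularity controlled through Lemma~\ref{lem:max-2} (giving the $\|\widehat Q\|_1^{1/2}$ factor).

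Concretely, I would first apply Lemma~\ref{lem:max} with regularity index $\tfrac d2 + \varepsilon$ to obtain
$$
\bigg[ \bE \sup_{t\in[0,T]} \|Z_t\|_{\dot H^{-d/2-\varepsilon}}^q \bigg]^{1/q}
\lesssim_{d,\alpha,q,\kappa,T} R\, \|\widehat Q\|_{p/(2-p)}^{1/2},
$$
which is legitimate because $\tfrac d2 + \varepsilon \in \big(\tfrac d2, \tfrac d2 + 1\big)$ for small $\varepsilon > 0$. For the other endpoint I would exploit that homogeneous and inhomogeneous negative-order norms are comparable at \emph{high} regularity: for a fixed low-order index $\beta$ with $\beta < \alpha$, the norm $\|Z_t\|_{\dot H^{-\beta}}$ can be dominated using Lemma~\ref{lem:max-2} applied at index $\beta$ (noting $\dot H^{-\beta} \hookleftarrow H^{-\beta}$ is the wrong direction, so the correct route is to keep $\beta$ in the range $(d(\tfrac1p-\tfrac12), \tfrac d2]$ where Lemma~\ref{lem:max-2} gives the $\|\widehat Q\|_1^{1/2}$ bound and then reconcile the homogeneous/inhomogeneous discrepancy at low frequencies). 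The homogeneous interpolation inequality
$$
\|Z_t\|_{\dot H^{-\alpha}} \leq \|Z_t\|_{\dot H^{-\beta}}^{\,\theta}\, \|Z_t\|_{\dot H^{-d/2-\varepsilon}}^{\,1-\theta}
$$
with the weight $\theta$ chosen so that $-\alpha = -\theta\beta - (1-\theta)(\tfrac d2 + \varepsilon)$ then yields, via H\"older in $\theta$, the product bound $\|\widehat Q\|_1^{\theta/2}\|\widehat Q\|_{p/(2-p)}^{(1-\theta)/2}$. Matching exponents, the condition $\alpha = \theta\beta + (1-\theta)(\tfrac d2 + \varepsilon)$ together with the choice $\beta$ at the critical endpoint $d(\tfrac1p - \tfrac12)$ forces $\theta = p'(\tfrac d2 + \varepsilon - \alpha)/d$, exactly the value stated.

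The main technical obstacle, and the reason the interpolation is "slightly more complicated" than in Corollary~\ref{cor-stoch-convol-1}, is keeping \emph{both} endpoints homogeneous. One cannot simply reuse the inhomogeneous bound, because interpolating a homogeneous norm between a homogeneous and an inhomogeneous one does not close; the homogeneous low-frequency behaviour must be controlled directly. I would resolve this by verifying that the low-regularity endpoint can be taken at a homogeneous index $\beta = d(\tfrac1p - \tfrac12) + \delta$ for small $\delta > 0$, where the integrability computation $\int |\xi|^{-2\beta}(\,\cdot\,)\,\d\xi$ underlying Lemma~\ref{lem:max-2}'s estimate still converges at low frequencies; this is precisely where the restriction $\alpha > \frac{d(d+2)(2-p)}{2(2p+d(2-p))}$ enters, ensuring $\theta \in (0,1)$ so that both factors genuinely appear. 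The constraint that the lower bound on $\alpha$ defines a nonempty range reduces, after substituting $\beta$ and $\theta$, to the single inequality $p > 1$, as noted in the statement. Finally, taking the supremum over $t$ inside the expectation is justified exactly as in Lemma~\ref{lem:max}, via the Kolmogorov continuity argument applied to the increments $Z_t - Z_s$ in each of the two norms, so no new continuity argument is required.
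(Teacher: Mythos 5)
Your overall plan (interpolate between the bound from Lemma~\ref{lem:max} at index $\frac d2+\varepsilon$ and a low-index bound carrying $\|\widehat Q\|_1^{1/2}$) points in a workable direction, but it hinges on an estimate you never establish: a bound on the \emph{homogeneous} norm $\bE\sup_t\|Z_t\|_{\dot H^{-\beta}}^q$ with $\beta = d(\frac1p-\frac12)+\delta$ and constant of the form $R\,\|\widehat Q\|_1^{1/2}$. Lemma~\ref{lem:max-2} only provides the inhomogeneous norm $H^{-\beta}$, which at negative order is weaker than $\dot H^{-\beta}$, so, as you yourself note, it cannot be used directly. Your proposed fix --- that ``the integrability computation $\int|\xi|^{-2\beta}(\cdot)\,\d\xi$ underlying Lemma~\ref{lem:max-2}'s estimate still converges at low frequencies'' --- fails as stated: in that proof the heat semigroup is first discarded via the smoothing estimate, and the Sobolev weight is then paired against $|\widehat f_s|^2 * {\rm Tr}(\widehat Q)$ by H\"older with exponents $b=p/(2-p)$ and $b'$. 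The inhomogeneous weight $\<\cdot\>^{-2(\beta-\varepsilon)}$ lies in $L^b$ precisely because it is bounded near $\xi=0$; the homogeneous weight $|\xi|^{-2(\beta-\varepsilon)}$ is \emph{never} in $L^b(\R^d)$ (it diverges at the origin when $\beta-\varepsilon\geq d(\frac1p-\frac12)$ and at infinity otherwise), so the H\"older step collapses. A correct homogeneous low-index estimate can in fact be proved, but only by restructuring the argument: one must retain the factor $|\xi|^2\e^{-2\kappa|\xi|^2(t-r)}$ coming from the divergence form and the heat kernel inside the H\"older step (as in the proof of Lemma~\ref{lem:max}), since that is what cures the low-frequency singularity; this amounts to proving a new lemma, not applying Lemma~\ref{lem:max-2}.

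A second, related inaccuracy: you claim the restriction $\alpha > \frac{d(d+2)(2-p)}{2(2p+d(2-p))}$ is ``precisely'' what ensures $\theta\in(0,1)$. That is not so: in your scheme $\theta\in(0,1)$ only needs $\alpha>\beta\approx d(\frac1p-\frac12)$, which is strictly weaker (one checks $\frac{d(d+2)(2-p)}{2(2p+d(2-p))} > d(\frac1p-\frac12)$ whenever $p>1$); indeed, were your missing endpoint estimate available, your argument would yield the conclusion on the larger range $\alpha\in\big(d(\frac1p-\frac12),\frac d2\big]$. The paper's proof is structured differently and this is where its constraint genuinely originates: it establishes a mixed interpolation inequality bounding $\|f\|_{\dot H^{-\alpha}}$ by $\|f\|_{H^{-\gamma}}^{\theta}\big[\|f\|_{\dot H^{-\beta-\frac{\theta}{1-\theta}\gamma}}+\|f\|_{\dot H^{-\beta}}\big]^{1-\theta}$, i.e. it keeps the \emph{inhomogeneous} norm from Lemma~\ref{lem:max-2} as one endpoint and pays for the low-frequency discrepancy with a \emph{deeper} homogeneous norm $\dot H^{-\beta-\frac{\theta}{1-\theta}\gamma}$ in the other factor; Lemma~\ref{lem:max} then controls both homogeneous norms provided $\beta+\frac{\theta}{1-\theta}\gamma<\frac d2+1$, and this last condition is exactly what forces $\alpha>\frac{d(d+2)(2-p)}{2(2p+d(2-p))}$. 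So, as written, your proof has a genuine gap at its key endpoint estimate, and its accounting of where the hypothesis on $\alpha$ enters is incorrect.
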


\begin{proof}
We start by proving the following interpolation estimate: let $0<\gamma<\alpha <\beta$ and $\theta\in (0,1)$ such that $\alpha= \theta\gamma+ (1-\theta)\beta$, then for any $f\in H^{-\gamma} \cap \dot H^{-\beta-\frac\theta{1-\theta}\gamma}$, it holds
  \begin{equation}\label{interpolation-mixed}
  \|f\|_{\dot H^{-\alpha}} \lesssim_{\theta,\gamma} \|f\|_{H^{-\gamma}}^{\theta} \Big[\|f\|_{\dot H^{ -\beta -\frac\theta{1-\theta}\gamma}} + \|f\|_{\dot H^{-\beta}} \Big]^{1-\theta}.
  \end{equation}
Indeed, by definition,
  $$\aligned
  \|f\|_{\dot H^{-\alpha}}^2 & = \int_{\mR^d} \frac{|\widehat f(\xi)|^2}{|\xi|^{2\alpha}}\,\d\xi = \int_{\mR^d} \frac{|\widehat f(\xi)|^{2\theta}}{|\xi|^{2\theta\gamma}} \frac{|\widehat f(\xi)|^{2(1-\theta)}}{|\xi|^{2(1-\theta)\beta}} \,\d\xi \\
  &= \int_{\mR^d} \frac{|\widehat f(\xi)|^{2\theta}}{(1+|\xi|^2)^{\theta\gamma}} \frac{(1+|\xi|^2)^{\theta \gamma}}{|\xi|^{2\theta \gamma}} \frac{|\widehat f(\xi)|^{2(1-\theta)}}{|\xi|^{2(1-\theta)\beta}} \,\d\xi;
  \endaligned$$
H\"older's inequality leads to
  $$\aligned
  \|f\|_{\dot H^{-\alpha}}^2 &\leq \bigg[\int_{\mR^d} \frac{|\widehat f(\xi)|^2}{(1+|\xi|^2)^{\gamma}}\,\d\xi \bigg]^\theta \bigg[\int_{\mR^d} \Big(\frac{1+|\xi|^2}{|\xi|^2}\Big)^{\frac{\theta\gamma}{1-\theta}} \frac{|\widehat f(\xi)|^2}{|\xi|^{2\beta}} \,\d\xi \bigg]^{1-\theta} =: \|f\|_{H^{-\gamma}}^{2\theta} J^{1-\theta},
  \endaligned $$
where $J$ denotes the integral in the second square bracket. We have
  $$\aligned J&= \bigg(\int_{|\xi|\leq 1} + \int_{|\xi|> 1}\bigg) \Big(\frac{1+|\xi|^2}{|\xi|^2} \Big)^{\frac{\theta \gamma}{1-\theta}} \frac{|\widehat f(\xi)|^2}{|\xi|^{2\beta}} \,\d\xi \\
  &\leq \int_{|\xi|\leq 1} \frac{2^{\theta\gamma/(1-\theta)}}{|\xi|^{2\beta+ 2\gamma\theta/(1-\theta)}} |\widehat f(\xi)|^2 \,\d\xi + \int_{|\xi|> 1} 2^{\theta\gamma/(1-\theta)} \frac{|\widehat f(\xi)|^2}{|\xi|^{2\beta}} \,\d\xi \\
  &\lesssim_{\theta,\gamma} \|f\|_{\dot H^{ -\beta -\frac\theta{1-\theta}\gamma}}^2 + \|f\|_{\dot H^{-\beta}}^2.
  \endaligned  $$
Summing up these estimates gives rise to \eqref{interpolation-mixed}.

In order to apply the interpolation inequality \eqref{interpolation-mixed} and Lemma \ref{lem:max}, we claim that, for $\alpha$ given as in the statement, we can take $\gamma\in \big(d(\frac1p -\frac12), \alpha\big)$ and $\beta\in \big(\frac d2, \frac d2+1\big)$ such that
  \begin{equation}\label{cor-stoch-convol-2-1}
  \beta+ \frac\theta{1-\theta}\gamma < \frac d2+1,
  \end{equation}
where $\theta= \frac{\beta-\alpha}{\beta-\gamma}$. Indeed, we first note that $p\in (1,2]$ implies $d(\frac1p -\frac12) \leq \frac{d(d+2)(2-p)}{2(2p + d(2-p))}$, and thus the range for $\gamma$ is not empty; next,  inequality \eqref{cor-stoch-convol-2-1} is equivalent to
  $$\frac{\beta-\alpha}{\beta-\gamma} = \theta < \frac{\frac d2+1-\beta}{\frac d2+1-\beta +\gamma},$$
which can be rewritten as
  $$\alpha> \frac{\frac d2+1}{\frac d2+1-\beta +\gamma} \gamma. $$
This is possible because we can take $\gamma$ and $\beta$ which are sufficiently close to $d(\frac1p -\frac12)$ and $\frac d2$, respectively, thus the above inequality follows from
  $$\alpha > \frac{\frac d2+1}{1 +d(\frac1p -\frac12)} d\Big(\frac1p -\frac12 \Big); $$
the latter is nothing but $\alpha> \frac{d(d+2)(2-p)}{2(2p + d(2-p))}$.

Inspired by the above arguments, we take $\varepsilon>0$ small enough and let $\gamma= d(\frac1p -\frac12)+ \varepsilon$, $\beta= \frac d2+\varepsilon$, then \eqref{cor-stoch-convol-2-1} is verified. By Lemma \ref{lem:max}, we have
  $$\bigg[ \bE \sup_{t\in [0,T]}\|Z_t\|_{\dot H^{ -\beta -\frac\theta{1-\theta}\gamma}}^{q} \bigg]^{1/q} \lesssim_{d, \alpha, q, \kappa, T} \|\widehat{Q}\|_{p/(2-p)}^{1/2} R. $$
We also have
  $$\bigg[ \bE \sup_{t\in [0,T]}\|Z_t\|_{\dot H^{ -\beta}}^{q} \bigg]^{1/q} \lesssim_{d, \alpha, q, \kappa, T} \|\widehat{Q}\|_{p/(2-p)}^{1/2} R $$
and the inhomogeneous estimate
  $$\bigg[ \bE \sup_{t\in [0,T]}\|Z_t\|_{ H^{ -\gamma}}^{q} \bigg]^{1/q} \lesssim_{d, \alpha, q, \kappa, T} \|\widehat{Q}\|_{1}^{1/2} R. $$
Combining these estimates with \eqref{interpolation-mixed}, for $\theta= p'(\frac d2+\varepsilon -\alpha)/d$, we have
  $$\aligned
  \bE \sup_{t\in [0,T]}\|Z_t\|_{\dot H^{-\alpha}}^{q} &\lesssim \bigg[\bE \sup_{t\in [0,T]}\|Z_t \|_{H^{-\gamma}}^{q} \bigg]^\theta \bigg[\bE \sup_{t\in [0,T]}\|Z_t \|_{ \dot H^{ -\beta -\frac\theta{1-\theta}\gamma}}^{q} + \bE \sup_{t\in [0,T]}\|Z_t \|_{ \dot H^{ -\beta}}^{q} \bigg]^{1-\theta} \\
  &\lesssim \Big[R^q \|\widehat{Q}\|_{1}^{q/2}\Big]^\theta \Big[R^q \|\widehat{Q}\|_{p/(2-p)}^{q/2} \Big]^{1-\theta} \\
  &= R^q \|\widehat{Q}\|_{1}^{q\theta/2} \|\widehat{Q}\|_{p/(2-p)}^{q(1-\theta)/2}.
  \endaligned $$
This leads to the desired estimate.
\end{proof}
\begin{remark}
    Assume \eqref{eq:f-preserve} holds for some $p\in (1,2]$. If additionally 
    \begin{equation}\label{eq:Z-Lp}
        \sup_{t\in [0,T]} \l[\bE \|Z_t\|_p^q\r]^{\frac{1}{q}}\leq A \|f_0\|_p, \quad q\geq 1. 
    \end{equation}
    Then for any $1<p\leq 2, ~ \frac{d}{p}-\frac{d}{2} < \alpha \leq \frac{d}{2} ~\mbox{ and }~  0<\delta< \frac{\alpha}{d}+\frac{1}{2}- \frac{1}{p}$,
    \begin{equation*}
        \l[ \bE \sup_{t\in [0,T]}\|Z_t\|_{\dot H^{-\alpha}}^{q}\r]^{\frac{1}{q}} \lesssim_{d, \alpha, \delta, q, A} \|\widehat{Q}\|_\infty^{\frac{\alpha}{d}+\frac{1}{2}-\frac{1}{p}-\delta } \|f_0\|_p. 
    \end{equation*}
\end{remark}

    \subsection{Proof for Theorem \ref{thm:STE}}
    Rewriting \eqref{eq:STE-S} and \eqref{eq:heat} in mild form, and taking difference, we obtain
    \begin{equation*}
    	\begin{aligned}
    		f_t-\bar{f}_t & =\int_0^t P^\kappa_{t-r} \l( \nabla f_r \cdot  \d W_r \r)= \int_0^t P^\kappa_{t-r} \nabla \cdot (f_r \,\d W_r) = Z_t.
    	\end{aligned}
    \end{equation*}
Recall that if $f_0\in L^p(\R^d)$, then the stochastic transport equation \eqref{eq:STE-I} admits a probabilistically strong solution $f$ satisfying, $\bP$-a.s., $\|f_t\|_p \leq \|f_0\|_p$ for all $t\geq 0$, see \cite[Theorem 1.3]{GalLuo}; thus the bound $R$ in \eqref{eq:f-preserve} can be simply taken as $\|f_0\|_p$.
The desired results follow from Lemma \ref{lem:max} and  Corollary \ref{cor-stoch-convol-1}.

We conclude this section by remarking that, by Corollary \ref{cor-stoch-convol-2}, we can also obtain an estimate on the homogeneous norm $\|f_t-\bar{f}_t\|_{\dot{H}^{-\alpha}}$ for suitable $\alpha\leq \frac d2$.

	\section{Existence of weak solutions to stochastic $2$D Euler equations}\label{subsec-existence}

The purpose of this part is to prove weak existence for stochastic 2D Euler equations \eqref{eq:SEE-S} with transport noise. First we prove the following lemma in the smooth case. Let $\mathcal S$ be the space of Schwartz test functions and $\curl= \nabla^\perp\, \cdot$ the curl operator. As in \cite[Proposition 4.1]{GalLuo}, we can also add a suitable forcing term on the right-hand side of \eqref{eq:nonlinear-regularized-spde}, but we do not pursue such generality here.

	\begin{lemma}\label{lem:apriori-estimates-nonlinear}
		Let $\omega_0\in \mathcal S$,  $W$ be a smooth noise as in Section \ref{subsec-noise}, and $\mathcal{R}$ a Fourier multiplier associated to some $r\in\mathcal S$.
		Consider the SPDE
		\begin{equation}\label{eq:nonlinear-regularized-spde}
			\d \omega+ (\mathcal{R}\, \curl^{-1}\omega)\cdot \nabla\omega\,\d t + \circ\d W \cdot \nabla\omega = 0,
			\quad \omega\vert_{t=0}=\omega_0.
		\end{equation}
		Then there exists a unique global regular strong solution to \eqref{eq:nonlinear-regularized-spde} and for any $p\in [1,\infty]$,
		\begin{equation}\label{eq:apriori-vorticity-Lp}
			\| \omega_t\|_{p}\leq \| \omega_0\|_{p} \quad \bP \mbox{-a.s. } \forall\, t\geq 0.
		\end{equation}
	\end{lemma}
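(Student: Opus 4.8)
The plan is to treat \eqref{eq:nonlinear-regularized-spde} as a linear transport equation in $\omega$ whose transporting velocity $u:=\mathcal{R}\,\curl^{-1}\omega$ depends on $\omega$ through a smoothing, divergence-preserving operator. Since $\curl^{-1}$ is the $2$D Biot--Savart operator, with Fourier symbol $\mathrm{i}\,\xi^\perp/|\xi|^2$, and $\mathcal{R}$ is a scalar Fourier multiplier with symbol $r\in\cS$, the composite $\mathcal{R}\,\curl^{-1}$ has a kernel obtained by convolving the Biot--Savart kernel with a Schwartz function; hence $u$ is a smooth, bounded vector field with all derivatives controlled by, say, $\|\omega\|_{L^1\cap L^\infty}$. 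Moreover $u$ is divergence free: $\curl^{-1}\omega$ is divergence free by construction, and the scalar multiplier $\mathcal{R}$ preserves this property since $\mathrm{i}\,\xi\cdot r(\xi)\widehat{v}(\xi)=r(\xi)\,\mathrm{i}\,\xi\cdot\widehat v(\xi)$ vanishes whenever $\mathrm{i}\,\xi\cdot\widehat v(\xi)$ does.

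For existence and uniqueness I would use the stochastic flow of characteristics. Given a velocity field $u$ in a suitable class, the SDE
\begin{equation*}
\d X_t = u(t,X_t)\,\d t + \sum_k \sigma_k(X_t)\circ \d B^k_t, \qquad X_0=\mathrm{id},
\end{equation*}
generates a flow of diffeomorphisms that is volume preserving, because both $u$ and the $\sigma_k$ are divergence free and Stratonovich calculus obeys the ordinary chain rule; the regular solution of the transport equation is then $\omega_t=\omega_0\circ X_t^{-1}$. This reduces the problem to the fixed point $u=\mathcal{R}\,\curl^{-1}(\omega_0\circ X_t^{-1})$. The smoothing by $\mathcal{R}$ turns this into a contraction locally in time, since the map $\omega\mapsto u$ gains arbitrarily many derivatives while losing no regularity of $\omega$; this yields a unique local solution. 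Volume preservation then provides the a priori bounds $\|\omega_t\|_p=\|\omega_0\|_p$ in every $L^p$, hence uniform $C^k_b$ control of $u$, which rules out finite-time blow-up and upgrades the local solution to a global one.

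The estimate \eqref{eq:apriori-vorticity-Lp} is in fact an equality and follows by either of two routes. Directly, $\omega_t=\omega_0\circ X_t^{-1}$ together with the change of variables formula and the unit Jacobian of the volume-preserving flow gives $\|\omega_t\|_p=\|\omega_0\|_p$ for every $p\in[1,\infty]$. Alternatively, and more in line with the It\^o framework of the paper, one rewrites \eqref{eq:nonlinear-regularized-spde} in the It\^o form \eqref{eq:SEE-I}, namely $\d\omega+u\cdot\nabla\omega\,\d t+\d W\cdot\nabla\omega=\kappa\Delta\omega\,\d t$, and applies It\^o's formula to the functional $\int_{\R^2}|\omega|^p\,\d x$ for $1<p<\infty$ (then letting $p\to 1,\infty$). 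Upon integrating in space, the transport drifts contribute $\int u\cdot\nabla(|\omega|^p)\,\d x=0$ and $\int\sigma_k\cdot\nabla(|\omega|^p)\,\d x=0$ by integration by parts and the divergence-free property, so both the It\^o drift from $u$ and every stochastic integral vanish. What remains is the dissipative term $-\kappa p(p-1)\int|\omega|^{p-2}|\nabla\omega|^2\,\d x$ against the It\^o correction $\tfrac{p(p-1)}{2}\int|\omega|^{p-2}\sum_k(\sigma_k\cdot\nabla\omega)^2\,\d x$; using $\sum_k\sigma_k^i(x)\sigma_k^j(x)=Q^{ij}(0)=2\kappa\,\delta_{ij}$ from \eqref{eq:covariance-series-representation} and \eqref{eq:Q_0}, the latter equals $\kappa p(p-1)\int|\omega|^{p-2}|\nabla\omega|^2\,\d x$, exactly cancelling the former, so that $\d\int|\omega|^p\,\d x=0$.

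The main obstacle, I expect, is the nonlinear fixed point rather than the estimate: one must verify that the smoothing gain from $\mathcal{R}$ dominates the Biot--Savart singularity and yields a genuine contraction, and that the local solution extends globally on the strength of the conserved $L^p$ norms. The remaining technical points---measurability of the flow, volume-preservation (equivalently the vanishing of the divergences), and the rigorous justification of It\^o's formula together with the interchange of summation and integration---are routine in the present smooth setting. The structural heart of the argument is the identity $Q(0)=2\kappa I_d$, which is precisely what makes the It\^o correction cancel the dissipation and renders the $L^p$ norms pathwise invariant.
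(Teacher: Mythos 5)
Your proposal follows essentially the same route as the paper's proof: the observation that $\mathcal{R}\,\curl^{-1}$ acts by convolution with a $C_b^\infty$ kernel $\widetilde K=\cF^{-1}(r)\ast K$, solvability via the stochastic flow of characteristics with the representation $\omega_t=\omega_0\circ (X_t^{\omega})^{-1}$, and the $L^p$ bound (indeed an equality) from the measure-preserving property of the flow. The only differences are cosmetic: the paper outsources the nonlinear fixed point to \cite[Theorem 13]{CogFla} and the smoothness of the flow to \cite{Kunita}, where you sketch a contraction argument directly, and your alternative It\^o-formula derivation of the $L^p$ conservation (using the cancellation coming from $Q(0)=2\kappa I_d$) is a correct supplement that the paper does not need.
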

	
	\begin{proof}
		We follow the arguments of \cite[Proposition 4.1]{GalLuo} and first discuss strong existence and uniqueness of regular solutions for the system \eqref{eq:nonlinear-regularized-spde}.
		Recalling that $\curl^{-1}\omega=K\ast\omega$ for the Biot--Savart kernel $K$, it holds $\mathcal R\, \curl^{-1} \omega= \widetilde K\ast \omega$ for $\widetilde K =\cF^{-1}(r)* K$, which is now a kernel in $C_b^\infty$ thanks to the assumption $r\in \mathcal S$ and properties of Biot--Savart kernel.
		
		The strong well-posedness follows from the results of \cite[Theorem 13]{CogFla}, which also yield the representation
		\begin{equation}\label{eq:proof-flow-representation}
			\omega_t (X^\omega_t(x)) = \omega_0(x)
		\end{equation}
		where $X^\omega$ is the flow associated to the Stratonovich SDE
		\begin{equation*}
			\d X^\omega_t(x)
			= b^\omega( X^\omega_t(x))\, \d t + W(\circ \d t, X_t^{\omega}(x)),
			\quad b^\omega:= \tilde K\ast \omega.
		\end{equation*}
		By the above observations $\widetilde K\ast\omega\in C^\infty_b$, which together with the results from \cite{Kunita} implies smoothness of the flow $X^\omega$; jointly with the regularity of $\omega_0$, and the representation \eqref{eq:proof-flow-representation}, this implies the desired regularity of $\omega$. Estimate \eqref{eq:apriori-vorticity-Lp} can be proved by taking the $L^p_x$-norm of both sides of \eqref{eq:proof-flow-representation} and using the measure-preserving property of $X^\omega_t: \mR^2 \to \mR^2$.
	\end{proof}
	
	Now the existence of weak solutions to \eqref{eq:SEE-I} follows from the next result by taking $r\equiv 1$.

	\begin{proposition}\label{prop:weak-existence-nonlinear}
		Let $p\in (4/3,2)$, $\omega_0\in L^p_x$ be a deterministic function;
		let $Q$ be a covariance function satisfying Assumption \ref{ass:covariance-basic} and $\mathcal{R}$ be a Fourier multiplier associated to $r\in L^\infty_x$.
		Then there exists a probabilistically weak solution $(\omega,W)$ to the SPDE
		\begin{equation}\label{eq:nonlinear-general-spde}
			\d \omega+ (\mathcal{R}\, \curl^{-1} \omega)\cdot \nabla\omega\,\d t + \d W \cdot \nabla\omega
			= \kappa \Delta \omega \, \d t,
		\end{equation}
		where the noise $W$ has covariance function $Q$; moreover $\omega$ satisfies the pathwise estimates: $\bP$-a.s. $\|\omega_t \|_{p} \leq \|\omega_0 \|_{p}$ for all $t\geq 0$.
	\end{proposition}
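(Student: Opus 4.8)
The plan is to construct the weak solution by a regularization-and-compactness argument, following the scheme of \cite[Proposition 4.2]{GalLuo}. First I would approximate the data by smooth objects: choose $\omega_0^n\in\mathcal S$ with $\omega_0^n\to\omega_0$ in $L^p_x$ and $\|\omega_0^n\|_p\le\|\omega_0\|_p$; replace the symbol $r\in L^\infty_x$ by $r^n\in\mathcal S$ with $\|r^n\|_\infty\le\|r\|_\infty$ and $r^n\to r$ a.e.; and cut off $g$ in \eqref{eq:covariance-basic} in frequency (e.g.\ $g^n=g\,\1_{\{|\xi|\le n\}}$), which keeps Assumption \ref{ass:covariance-basic} valid, produces a spatially smooth covariance $Q^n$ with $\widehat{Q^n}\to\widehat Q$ in $L^1$, and yields $\kappa_n:=\frac{d-1}{2d}\|g^n\|_1\to\kappa$ via \eqref{eq:Q_0}. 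For each $n$, Lemma \ref{lem:apriori-estimates-nonlinear} provides a regular strong solution $\omega^n$ of the Stratonovich problem \eqref{eq:nonlinear-regularized-spde} with these smooth data; passing to its It\^o form gives a solution of \eqref{eq:nonlinear-general-spde} with $\kappa$ replaced by $\kappa_n$, and the pathwise bound $\|\omega^n_t\|_p\le\|\omega_0^n\|_p\le\|\omega_0\|_p$ from \eqref{eq:apriori-vorticity-Lp}.

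Next I would establish uniform a priori estimates strong enough for compactness. The spatial bound $\omega^n\in L^\infty_t L^p_x$ is immediate. The crucial estimate is on the nonlinear term: writing $u^n=\mathcal R^n\curl^{-1}\omega^n=\widetilde K^n*\omega^n$ and recalling that the Biot--Savart kernel satisfies $K\in L^{2,\infty}(\R^2)$, Young's inequality in Lorentz spaces \eqref{Eq:Young1} gives $\|u^n_t\|_{s}\lesssim\|\omega^n_t\|_p$ with $\frac1s=\frac1p-\frac12$, and H\"older's inequality then yields $\|\omega^n_t u^n_t\|_r\lesssim\|\omega^n_t\|_p^2$ with $\frac1r=\frac2p-\frac12$. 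The hypothesis $p>4/3$ is precisely what forces $r>1$, so that $\{\omega^n u^n\}$ is bounded in $L^\infty_t L^r_x$ with a reflexive exponent. Using the equation together with the divergence-free structure of both $u^n$ and $W^n$, the drift and diffusion terms are controlled in negative Sobolev norms, while the martingale part is estimated by the Burkholder--Davis--Gundy argument as in Section \ref{sec-stoch-convol}; together these give a uniform bound on $\omega^n$ in some $C^\gamma([0,T];H^{-m}_x)$.

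I would then deduce tightness of the laws of $\omega^n$. Combining the bound in $L^\infty_t L^p_x$ with the H\"older-in-time bound in $H^{-m}_x$ and using the compact embedding $L^p(B_R)\hookrightarrow H^{-s}(B_R)$ on balls, an Aubin--Lions--Simon type criterion gives relative compactness in $C([0,T];H^{-s}_{\mathrm{loc}})$ for $0<s<m$; a diagonal argument over $R\to\infty$ yields tightness on the whole space. By Prokhorov's theorem and the Skorokhod representation (in Jakubowski's form, since the limit space is only quasi-Polish), on a new probability space one obtains a.s.\ convergent copies $\tilde\omega^n\to\tilde\omega$ together with noises $\tilde W^n\to\tilde W$ having the prescribed covariance $Q$.

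The main obstacle---and the reason Lorentz spaces and the restriction $p>4/3$ enter---is passing to the limit in the quadratic nonlinearity. Testing the weak formulation of \eqref{eq:nonlinear-general-spde} against $\phi\in\mathcal S$ and using $\div u^n=0$, the nonlinear contribution becomes $\int_0^t\langle\tilde\omega^n_s\tilde u^n_s,\nabla\phi\rangle\,\d s$. Since $\curl^{-1}$ gains one derivative and $r^n\to r$ boundedly, the velocities satisfy $\tilde u^n\to\tilde u$ in $H^{-s+1}_{\mathrm{loc}}$; interpolating with the uniform $L^s$ bound upgrades this to strong convergence in $L^{s'}_{\mathrm{loc}}$, whereas $\tilde\omega^n$ converges only weakly. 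The product of a weakly and a strongly convergent sequence then converges locally, and the uniform $L^\infty_t L^r_x$ bound with $r>1$ supplies the uniform integrability needed to pass the limit through the time integral and the expectation. A standard martingale-representation argument identifies the limiting stochastic integral as $\int_0^t\langle\tilde\omega_s\,\d\tilde W_s,\nabla\phi\rangle$, with the correction constant $\kappa_n\to\kappa$ recovering \eqref{eq:nonlinear-general-spde}; finally, the pathwise estimate $\|\tilde\omega_t\|_p\le\|\omega_0\|_p$ follows from the lower semicontinuity of the $L^p$-norm under weak convergence together with $\|\omega_0^n\|_p\to\|\omega_0\|_p$.
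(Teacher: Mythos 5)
Your overall strategy --- smooth the data, invoke Lemma \ref{lem:apriori-estimates-nonlinear}, obtain uniform $L^\infty_t L^p_x$ bounds plus time-equicontinuity in negative Sobolev norms, conclude tightness, apply Prohorov--Skorokhod, and pass to the limit --- is exactly the paper's, and most of the steps are sound. But your ``crucial estimate'' has a genuine gap. You bound $\|u^n_t\|_{s}\lesssim\|\omega^n_t\|_p$ by applying weak-type Young's inequality with the kernel $\widetilde K^n=\cF^{-1}(r^n)*K$, which implicitly requires a uniform bound $\|\widetilde K^n\|_{L^{2,\infty}}\lesssim \|K\|_{L^{2,\infty}}$. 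This is not justified: the hypothesis only gives $r\in L^\infty_x$, and a Fourier multiplier with merely bounded symbol is uniformly bounded only on $L^2$-based spaces (by Plancherel); it need not be bounded on $L^s$ for $s\neq2$ or on Lorentz spaces, and $\|\cF^{-1}(r^n)*K\|_{L^{2,\infty}}$ is not controlled by $\|r^n\|_{\infty}\|K\|_{L^{2,\infty}}$. (Your route would be fine for the application $r\equiv1$, but the proposition is stated for general bounded $r$.) The paper's proof is organized precisely to dodge this point: it estimates $\|(\mathcal R^N u^N)\omega^N\|_{\tilde p}\le\|\mathcal R^N u^N\|_q\|\omega^N\|_p$ and then moves up to the $L^2$ scale via Sobolev embedding,
$$\|\mathcal R^N u^N\|_q\lesssim\|\mathcal R^N u^N\|_{\dot H^a}\lesssim\|r\|_\infty\|u^N\|_{\dot H^a}\lesssim\|r\|_\infty\|\omega^N\|_{\dot H^{a-1}}\lesssim\|r\|_\infty\|\omega^N\|_p, \qquad a=2-\tfrac2p,$$
so that the multiplier only ever acts in $\dot H^a$ norms. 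You should reroute your estimate the same way; the exponents you obtain ($\tfrac1r=\tfrac2p-\tfrac12$, whence the constraint $p>4/3$) are identical to the paper's.

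A second, more minor gap is in the limit passage: you deduce $\tilde u^n\to\tilde u$ in $H^{1-s}_{\mathrm{loc}}$ from $\tilde\omega^n\to\tilde\omega$ in $H^{-s}_{\mathrm{loc}}$ ``since $\curl^{-1}$ gains one derivative''. But $\curl^{-1}$ is nonlocal, so local convergence of the vorticities does not by itself give local convergence of the velocities: the far-field contribution must be controlled, e.g.\ by splitting $K*\tilde\omega^n$ into near and far parts and using the uniform global bound $\|\tilde\omega^n\|_{L^\infty_tL^p_x}\le\|\omega_0\|_p$ together with $K\in L^{p'}(\{|z|>R\})$ (available precisely because $p<2$, hence $p'>2$). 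Alternatively, argue as the paper does: from local strong convergence plus global boundedness deduce that $\omega^N_s\rightharpoonup\omega_s$ weakly in $\dot H^{1-2/p}_x$, hence $\mathcal R u^N_s\rightharpoonup \mathcal R u_s$ weakly in $\dot H^{2-2/p}_x$, and pair this weak convergence against the fixed element $\omega_s\nabla\varphi$; the paper's splitting of the nonlinear difference into $I_1$ (strong local convergence of vorticity times uniformly bounded velocity) and $I_2$ (fixed vorticity times weakly convergent velocity) never needs strong convergence of the velocities at all. Both gaps are repairable, but as written your argument does not go through for general $r\in L^\infty_x$.
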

	
	\begin{proof}
		In light of the a priori estimates from Lemma \ref{lem:apriori-estimates-nonlinear}, we follow a compactness argument in the style of Flandoli--Gatarek \cite{FlaGat}.
		As before, we fix a finite interval $[0,T]$.
		
		First, we consider smooth approximations of \eqref{eq:nonlinear-general-spde}:
		$$\d \omega^N+ (\mathcal{R}^N\, \curl^{-1} \omega^N)\cdot \nabla\omega^N\,\d t + \d W^N \cdot \nabla\omega^N
		= \kappa \Delta \omega^N \, \d t,$$
		where the noises $W^N$ are defined by $W^N = h_{N^{-1}} \ast W$, where $h_t$ denotes the standard heat kernel;
		in particular, the associated covariances $Q^N$ are given by formula \eqref{eq:covariance-basic} with $g$ replaced by $g^N(\xi):= g(\xi) e^{-|\xi|^2/N}$, so that $W^N$ are smooth.
		Moreover, $\omega_0^N$ are smooth approximations of $\omega_0$ such that
		\begin{equation}\label{eq:proof-properties-approx1}
			\| \omega^N_0-\omega_0\|_{p} \to 0 \quad (N\to \infty)
		\end{equation}
		and satisfying the uniform bound
		\begin{equation}\label{eq:proof-properties-approx2}
			\| \omega^N_0\|_{p} \leq \| \omega_0\|_{p};
		\end{equation}
		one can verify that such approximations always exist. We also consider a sequence $r^N\in C^\infty_c$ such that $|r^N|\leq |r|$ for all $N$, $r^N\to r$ Lebesgue almost everywhere, and let $\mathcal R^N$ denote the associated Fourier multipliers.
		Set $K^N := \cF^{-1}(r^N)* K\in C^\infty_b$ and let $\omega^N$ be the solutions to the mollified It\^o SPDEs
		\begin{equation}\label{eq:nonlinear-approx-spde}
			\d \omega^N+ (K^N\ast\omega^N)\cdot \nabla\omega^N\,\d t + \d W^N \cdot \nabla\omega^N
			= \kappa_N\Delta \omega^N \, \d t
		\end{equation}
		with smooth initial data $\omega_0^N$, where $\kappa_N$ is given by the relation $Q^N(0)=2\kappa_N I_2$.
		By Lemma \ref{lem:apriori-estimates-nonlinear}, the solutions $\omega^N$ exist strongly, in particular we can take them all defined on the same probability space;
		moreover they satisfy the estimate \eqref{eq:apriori-vorticity-Lp}, where we can take the right-hand side to be independent of $N$, thanks to our choice of the approximations $(\omega_0^N, W^N, r^N)$.
		The rest of the proof is divided in two steps. \smallskip
		
		\textit{Step 1: Tightness.}
		Our first task is to derive equicontinuity estimates and moment bounds for $\{\omega^N\}_N$ in $C^0_t H^{-\beta}_x$, for suitable $\beta>0$;
		combined with \cite[Corollary A.5 and Remark A.6]{GalLuo} and the uniform pathwise bound for $\| \omega^N_t\|_{p}$, this will imply that their laws are tight in $C^0_t H^{-s}_{loc}$ for $s>2/p-1$.
		
		To this end, we need to estimate the time integrals of terms appearing in \eqref{eq:nonlinear-approx-spde} one by one.
		We start with the nonlinear term; to this end, set $u^N=\curl^{-1}\omega^N$, so that $K^N\ast \omega^N= \mathcal R^N u^N$ which
		is divergence free, we have
		\begin{align*}
			\big\| (K^N\ast \omega^N_t)\cdot\nabla \omega^N_t \big\|_{H^{-2}_x}
			& = \big\| \nabla \cdot ( (\mathcal R^N u^N_t)\, \omega^N_t) \big\|_{H^{-2}_x}\\
			& \lesssim \big\| (\mathcal R^N u^N_t)\, \omega^N_t \big\|_{H^{-1}_x}
			\lesssim \big\| (\mathcal R^N u^N_t)\, \omega^N_t \big\|_{\tilde p},
		\end{align*}
		where $\tilde p\in (1,p)$ will be chosen later. By H\"older's inequality with exponents $\frac1{\tilde p}= \frac1p +\frac1q$,
		$$\aligned
		\big\| (K^N\ast \omega^N_t)\cdot\nabla \omega^N_t \big\|_{H^{-2}_x} &\lesssim \| \mathcal R^N u^N_t\|_{q} \| \omega^N_t\|_{p} \lesssim \| \mathcal R^N u^N_t\|_{\dot H^a_x} \| \omega^N_t\|_{p},
		\endaligned $$
		where the parameter $a$ satisfies $\frac1q= \frac12 -\frac{a}2$. Since $\mathcal R^N$ is defined by a Fourier multiplier which is bounded uniformly in $N$, we have
		$$\big\| (K^N\ast \omega^N_t)\cdot\nabla \omega^N_t \big\|_{H^{-2}_x} \lesssim \| u^N_t\|_{\dot H^{a}_x} \| \omega^N_t\|_{p} \leq \| \omega^N_t\|_{\dot H^{a-1}_x} \| \omega^N_t\|_{p}. $$
		We want that $\| \omega^N_t\|_{\dot H^{a-1}_x} \lesssim \| \omega^N_t\|_{p}$ which, by Sobolev embedding in 2D, results in $\frac1p= 1-\frac{a}2$; therefore, choosing $\tilde p= 2p/(4-p) >1$ which requires $p>4/3$, and $q= 2p/(2-p)$, $a= 2-2/p$, the above computations hold true.  To sum up, we obtain
		$$\big\| (K^N\ast \omega^N_t)\cdot\nabla \omega^N_t \big\|_{H^{-2}_x} \lesssim \| \omega^N_t\|_{p}^2 \leq \| \omega_0\|_{p}^2,$$
where we have used the pathwise uniform $L^p$-bound $\{ \omega^N_t\}_{N\geq 1, t\in [0,T]}$. As a consequence,
		\begin{equation}\label{eq:equicontinuity-1}
			\sup_N\, \bE\bigg[\, \Big\| \int_0^\cdot \big[(K^N\ast \omega^N_t)\cdot\nabla \omega^N_t \big]\, \d t \Big\|_{C^{1/2}_t H^{-2}_x} \bigg]
			\lesssim \bigg[\int_0^T \| \omega_0\|_{p}^4\, \d t \bigg]^{1/2}
			< \infty.
		\end{equation}

		For the stochastic integrals, since $W^N_t= \sum_k \sigma_k^N W^k_t$ are divergence free in the space variable, for any $q\ge 1$, it holds
  \begin{equation}\label{proof-martingale}
  \aligned
  \bE\bigg[ \Big\| \int_s^t \nabla\omega^N_r \cdot \d W^N_r \Big\|_{H^{-2}_x}^{2q} \bigg]
  & = \bE \bigg[ \Big\| \nabla\cdot \int_s^t \omega^N_r\, \d W^N_r \Big\|_{H^{-2}_x}^{2q} \bigg] \\
  & \lesssim \bE \bigg[ \Big\| \int_s^t \omega^N_r\, \d W^N_r \Big\|_{H^{-1}_x}^{2q} \bigg]\\
  & \lesssim \bE\bigg[\Big( \sum_k \int_s^t \| \omega^N_r \sigma_k^N \|_{H^{-1}_x}^2\, \d r \Big)^q \bigg] .
  \endaligned
  \end{equation}
We have
  $$\sum_k \| \omega^N_r \sigma_k^N \|_{H^{-1}_x}^2 = \sum_k \int \<\xi\>^{-2} \big|\mathcal F(\omega^N_r \sigma^N_k)(\xi) \big|^2\,\d \xi = \sum_k \int \<\xi\>^{-2} \big|\big(\widehat\omega^N_r\ast\widehat \sigma^N_k \big)(\xi) \big|^2\,\d \xi $$
and, similarly to the treatment of \eqref{proof-stoch-convol-2},
  $$\aligned
  \sum_k \big|\big(\widehat\omega^N_r\ast\widehat \sigma^N_k \big)(\xi) \big|^2
  &= \sum_k \iint \widehat\omega^N_r(\xi-\eta) \, \overline{\widehat\omega^N_r(\xi-\zeta)}\, \widehat \sigma^N_k(\eta) \cdot \overline{\widehat \sigma^N_k(\zeta)}\,\d\eta \d\zeta \\
  &= \int |\widehat\omega^N_r(\xi-\eta)|^2 \, {\rm Tr}(\widehat{Q}^N (\eta)) \,\d\eta \\
  &= \big(|\widehat\omega^N_r|^2 \ast {\rm Tr}(\widehat{Q}^N)\big)(\xi);
  \endaligned $$
therefore, for any $a>1$, H\"older's inequality implies
  $$\aligned
  \sum_k \| \omega^N_r \sigma_k^N \|_{H^{-1}_x}^2
  &= \int \<\xi\>^{-2} \big(|\widehat\omega^N_r|^2 \ast {\rm Tr}(\widehat{Q}^N) \big)(\xi) \,\d\xi \\
  &\leq \big\|\<\cdot \>^{-2} \big\|_a \big\||\widehat\omega^N_r|^2 \ast {\rm Tr}(\widehat{Q}^N) \big\|_{a'} \\
  &\lesssim_a \big\| \widehat\omega^N_r\big\|_{2a'}^2 \|{\rm Tr}(\widehat{Q}^N) \|_{1},
  \endaligned $$
where in the last step we have used Young's inequality. As in the proof of Lemma \ref{lem:max-2}, we take $2a'$ to be the conjugate number of $p$, that is, $a= p/(2-p)>1$, then we arrive at
  $$\sum_k \| \omega^N_r \sigma_k^N \|_{H^{-1}_x}^2 \lesssim \big\| \omega^N_r \big\|_{p}^2\, \| \widehat{Q}^N \|_{1} \lesssim \| \omega_0 \|_{p}^2 \| \widehat{Q} \|_{1}, $$
where the last step is due to the uniform $L^p_x$-bound on $\{\omega^N_r \}_N$ and the fact that $\sup_N \| \widehat{Q}^N \|_{1} \lesssim \| \widehat{Q} \|_{1}<+\infty$. Inserting this estimate into
\eqref{proof-martingale} gives us
  \begin{align*}
			\bE\bigg[ \Big\| \int_s^t \nabla\omega^N_r \cdot \d W^N_r \Big\|_{H^{-2}_x}^{2q} \bigg]
			\lesssim \|\omega_0\|_{p}^{2q} (t-s)^q.
		\end{align*}
Combining the above estimate with Kolmogorov's continuity theorem, one can conclude that for any $\gamma<1/2$ and any $q\in [1,\infty)$ it holds
		\begin{equation}\label{eq:equicontinuity-2}
			\sup_N \, \bE\bigg[ \Big\| \int_0^\cdot \nabla\omega^N_r\cdot \d W^N_r\Big\|_{C^\gamma_t H^{-2}_x}^q \bigg] <\infty.
		\end{equation}
		Next, again by the uniform pathwise bounds on $\| \omega^N\|_{L^p_x}$, it holds
		\begin{equation}\label{eq:equicontinuity-3}
			\aligned
			\sup_N\, \bE\bigg[ \Big\| \int_0^\cdot \kappa_N \Delta \omega^N_r\, \d r \Big\|_{C^{1/2}_t H^{-3}_x} \bigg]
			&\lesssim \sup_N\, \bE \bigg[ \int_0^T \| \omega^N_r\|_{H^{-1}_x}^2\, \d r \bigg]^{1/2} \\
			&\leq \sup_N\, \bE \bigg[ \int_0^T \| \omega^N_r\|_{p}^2 \, \d r \bigg]^{1/2} \\
            &\leq T^{1/2} \|\omega_0 \|_p < \infty.
			\endaligned
		\end{equation}

		Writing the SPDEs \eqref{eq:nonlinear-approx-spde} in integral form and combining the estimates \eqref{eq:equicontinuity-1}, \eqref{eq:equicontinuity-2} and \eqref{eq:equicontinuity-3}, we can conclude that the family $\{\omega^N\}_N$ is equicontinuous in $C^0_t H^{-3}_x$, with suitable moment bounds. Moreover, since $L^p_x \hookrightarrow H^{1-2/p}_x$, thus we have the pathwise bound
  $$\sup_N \| \omega^N\|_{L^\infty_t H^{1-2/p}_x} \lesssim \sup_N \| \omega^N\|_{L^\infty_t L^p_x} \leq \| \omega_0 \|_{p}; $$
		by \cite[Remark A.6]{GalLuo} this implies tightness of the laws of the sequence in $C^0_t H^{-s}_{loc}$, for any $s>2/p-1$. Note that $ 2/p-1 <1 $ since $p >1$.

		\textit{Step 2: Passage to the limit in a new probability space.}
		The next step is very classical, therefore we mostly sketch it; for a thorough presentation in a similar setting, see for instance \cite[Proof of Theorem 2.2]{FGL21}.
		
		By construction $W^N\to W$ in $L^2_\Theta C^0_t L^2_{loc}$; together with Step 1, this implies tightness of the laws of $\{(\omega^N,W^N)\}_N$ in $C^0_t H^{-s}_{loc}\times C^0_t L^2_{loc}$ for any $s\in (2/p-1,1)$.
		By Prohorov's theorem and Skorohod's representation theorem, we can extract a (not relabelled) subsequence and construct a new probability space $(\tilde\Theta, \tilde{\mathbb F}, \tilde\bP)$, on which there exists a sequence of processes $\{(\tilde \omega^N,\tilde W^N)\}_N$ satisfying:
		\begin{itemize}
			\item[i)] for any $N\geq 1$, $(\tilde \omega^N, \tilde W^N)$ has the same law as  $(\omega^N, W^N)$;
			\item[ii)] $\tilde\bP$-a.s., $(\tilde \omega^N, \tilde W^N)\to (\tilde \omega,\tilde W)$ in $C^0_t H^{-s}_{loc}\times C^0_t L^2_{loc}$.
		\end{itemize}
	  Item i) above implies that $\tilde\omega^N$ is a solution to \eqref{eq:nonlinear-approx-spde}, adapted to the filtration generated by $\tilde W^N$, satisfying the same pathwise bound \eqref{eq:apriori-vorticity-Lp}; moreover, $\tilde W^N$ are again $Q^N$-Brownian motions.
		By Item ii) and uniform $L^p_x$-bounds on $\{\tilde\omega^N \}_N$, it then follows that, on a set of full probability $\tilde\bP$, $\tilde\omega^N_t\rightharpoonup \tilde \omega_t$ in $L^p_x$ for all $t\in [0,T]$; moreover, since $\tilde W^N\to \tilde W$, $\tilde W$ is a $Q$-Brownian motion.
		With slightly more effort, using i) and ii) above, one can additionally show that $\tilde W$ is a $\tilde{\mathbb F}_t$-Brownian motion, where $\tilde {\mathbb F}_t=\sigma\big(\tilde W_r, \tilde \omega_r: r\leq t \big)$. 
  
		We claim that $\big( \tilde\Theta, \tilde{\mathbb F}, \tilde {\mathbb F}_t, \tilde \bP; \tilde \omega, \tilde W \big)$ is a desired weak solution to \eqref{eq:nonlinear-general-spde}. This can be accomplished by applying Items i) and ii) and passing to the limit as $N\to\infty$ in the weak form of the SPDEs \eqref{eq:nonlinear-approx-spde} satisfied by $\tilde\omega^N$.
		For simplicity, let us drop the tildes in the notation of processes, so that for any $\varphi\in C_c^\infty$, $\tilde\bP$-a.s. for all $t\in [0,T]$, it holds
		\begin{equation}\label{eq:nonlinear-approx-spde-new}\begin{split}
				\<\omega^N_t,\varphi\>
				& = \<\omega^N_0,\varphi\> + \int_0^t \big\<\omega^N_s, (\mathcal R^N u^N_s) \cdot \nabla \varphi \big\>\,\d s + \int_0^t \big\<\omega^N_s, \d W^N_s\cdot\nabla \varphi \big\> \\
				& \quad + \int_0^t \kappa_N \< \omega^N_s, \Delta \varphi \>\,\d s.
		\end{split}\end{equation}
The nonlinear terms $\<\omega^N_s, (\mathcal R^N u^N_s) \cdot \nabla \varphi\>$ are the only ones requiring less standard treatment. We have
  $$\aligned
  &\big\<\omega^N_s, (\mathcal R^N u^N_s) \cdot \nabla \varphi \big\> - \big\<\omega_s, (\mathcal R u_s) \cdot \nabla \varphi \big\> \\
  &= \big\<\omega^N_s- \omega_s, (\mathcal R^N u^N_s) \cdot \nabla \varphi \big\> + \big\<\omega_s, (\mathcal R^N u^N_s- \mathcal R u_s) \cdot \nabla \varphi \big\>,
  \endaligned $$
where the two quantities will be denoted by $I_1$ and $I_2$, respectively. For a compact set $K\subset \mathbb R^2$ and $\gamma\geq 0$, we write $H^\gamma_K$ for those elements of $H^\gamma_x$ which are supported in $K$. Due to the presence of test function $\varphi$, we can regard $\mathcal R^N u^N_s$ as a distribution restricted to ${\rm supp}(\varphi)$; then,
  $$\aligned
  |I_1| &\leq \big\|(\omega^N_s- \omega_s)\nabla\varphi \big\|_{H^{-\gamma}_{\rm supp(\varphi)}} \big\|\mathcal R^N u^N_s \big\|_{H^\gamma_{\rm supp(\varphi)}}\\ 
  & \leq C_\varphi \big\|(\omega^N_s- \omega_s)\nabla\varphi \big\|_{H^{-\gamma}_{x}} \big\|\mathcal R^N u^N_s \big\|_{\dot H^\gamma_{\rm supp(\varphi)}} ,
  \endaligned $$
where the last step is due to the fact that compactly supported nonhomogeneous norms are equivalent to the corresponding homogeneous ones, cf. \cite[Proposition 1.55]{BCD}. One has
  $$\big\|\mathcal R^N u^N_s \big\|_{\dot H^\gamma_{\rm supp(\varphi)}} \leq \big\|\mathcal R^N u^N_s \big\|_{\dot H^\gamma_x} \lesssim \big\| u^N_s \big\|_{\dot H^\gamma_x} \lesssim \big\| \omega^N_s \big\|_{\dot H^{\gamma-1}_x} \lesssim \big\| \omega^N_s \big\|_{L^p_x} \leq \| \omega_0 \|_{L^p_x} $$
if we take $\gamma=2-2/p$, which is greater than $2/p-1$ since $p>4/3$. By Item ii) above, we know that, for any $\gamma>2/p-1$, $\tilde{\bP}$-a.s., $\|\omega^N_s- \omega_s\|_{C_t H^{-\gamma}_{loc}} \to 0$ as $N\to \infty$; therefore,
  $$\lim_{N\to \infty} I_1=0. $$

Now we turn to show that $I_2 $ vanishes as $N\to \infty$; it holds
  $$I_2= \big\<\omega_s, (\mathcal R^N- \mathcal R) u^N_s \cdot \nabla \varphi \big\> + \big\<\omega_s, \mathcal R (u^N_s- u_s) \cdot \nabla \varphi \big\>. $$
Thanks to the bound on $\omega$ and uniform estimates on $u^N$, the first term can be easily shown to vanish by the assumption on $\mathcal R^N$ and dominated convergence theorem. Thus we focus on the second term. Item ii) implies that $\omega^N$ converges $\tilde{\bP}$-a.s. to $\omega$ in $C_t H^{-\gamma}_{loc}$ for any $\gamma>2/p-1$. Combining with the boundedness of $\{\omega^N\}_N$ in $\dot H^{1-2/p}_x$, we deduce that, for any $s\in [0,T]$, $\omega^N_s$ converges weakly in $\dot H^{1-2/p}_x$ to $\omega_s$; therefore, $ \mathcal R u^N_s=  \mathcal R {\rm curl}^{-1} \omega^N_s$ converges weakly in $\dot H^{2-2/p}_x$ to $ \mathcal R u_s=  \mathcal R {\rm curl}^{-1} \omega_s$. Applying again \cite[Proposition 1.55]{BCD}, we see that, as $N\to \infty$,
  $$\mathcal R u^N_s \cdot \nabla\varphi \rightharpoonup \mathcal R u_s \cdot \nabla\varphi \quad \mbox{in } H^{2-2/p}_x. $$
As $H^{2-2/p}_x \subset \dot H^{2/p-1}_x$ and $\omega_s \in \dot H^{1-2/p}_x$, one has, $\tilde{\bP}$-a.s. for all $s\in [0,T]$,
  $$\lim_{N\to \infty}  \big\<\omega_s, \mathcal R (u^N_s- u_s) \cdot \nabla \varphi \big\> =0. $$
Summing up the above arguments, we conclude the convergence of nonlinear terms.
	\end{proof}

	\section{Quantitative estimates}

 This section consists of two parts: we first present a well-posedness result for the vorticity form of deterministic 2D Navier-Stokes equation with $L^p_x$ initial data, then we provide the proof of Theorem \ref{thm:SEE} in the second part. 

 \subsection{Well-posedness of deterministic 2D Navier-Stokes equation}

	The following result on the existence and uniqueness of solutions to deterministic 2D Navier-Stokes equation in vorticity form does not seem to have been explicitly stated in the literature, thus we present here its proof for completeness.
	
	\begin{proposition}\label{prop-2D-NSE}
		Let $p\in (1,2)$. Assume that $\omega_0\in L^p(\mR^2)$, then \eqref{eq:NS} admits a unique solution in $C([0,T];L^p)$. Moreover, such a solution satisfies
		\begin{equation}\label{eq:bar-w}
			\|{\omega}_t \|_{p} \leq \|\omega_0\|_p ~\mbox{ and }~ \|\nabla {\omega}_t \|_{p} \leq C t^{-\frac{1}{2}}\|\omega_0\|_p,
		\end{equation}
		where $C$ only depends on $p$ and $T$. Consequently, the solution also belongs to $L^{\frac{2}{\alpha}} ([0,T]; H^{\alpha, p})$ for all $\alpha\in [0,1)$; here $H^{\alpha, p}$ is the Bessel potential space.
	\end{proposition}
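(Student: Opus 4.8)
The plan is to construct $\bar\omega$ as a mild solution and read off its regularity from the fixed-point space. Writing the nonlinearity in divergence form $\bar u\cdot\nabla\bar\omega=\nabla\cdot(\bar u\,\bar\omega)$ (legitimate since $\nabla\cdot\bar u=0$), with $\bar u=K\ast\bar\omega$ and $K$ the $2$D Biot--Savart kernel (which lies in $L^{2,\infty}(\R^2)$), equation \eqref{eq:NS} becomes
$$
\bar\omega_t=\e^{\kappa t\Delta}\omega_0-\int_0^t\e^{\kappa(t-s)\Delta}\nabla\cdot(\bar u_s\bar\omega_s)\,\d s=:\e^{\kappa t\Delta}\omega_0+B(\bar\omega,\bar\omega)_t .
$$
Since $L^p_x$ is subcritical for the $2$D vorticity when $p>1$, I would set up a contraction in the space $X_T$ with norm
$$
\|\omega\|_{X_T}:=\sup_{0<t\le T}\|\omega_t\|_p+\sup_{0<t\le T}t^{1/p-1/m}\|\omega_t\|_m ,
$$
where the auxiliary exponent is fixed in the range $m\in\big(\max\{p,\tfrac43\},\tfrac{4p}{p+2}\big]$, which is nonempty precisely because $p\in(1,2)$. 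The classical $L^q$-heat estimates give $\|\e^{\kappa t\Delta}\omega_0\|_{X_T}\lesssim\|\omega_0\|_p$ at once.

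The heart of the matter is the bilinear bound $\|B(\omega,\eta)\|_{X_T}\lesssim T^{1-1/p}\|\omega\|_{X_T}\|\eta\|_{X_T}$. I would estimate the product $u_\omega(s)\eta_s$ at fixed time using the Lorentz-space inequalities of Proposition \ref{prop:Lorentz}: Young's inequality together with $K\in L^{2,\infty}$ yields $\|u_\omega(s)\|_a\lesssim\|\omega_s\|_m$ with $\tfrac1a=\tfrac1m-\tfrac12$, and Hölder's inequality then gives $\|u_\omega(s)\eta_s\|_\rho\lesssim\|\omega_s\|_m\|\eta_s\|_m$ with $\tfrac1\rho=\tfrac2m-\tfrac12$. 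The two constraints on $m$ are exactly what make this work: $m\ge\tfrac43$ forces $\rho\ge1$ (so the product lives in a genuine Lebesgue space), while $m\le\tfrac{4p}{p+2}$ forces $\rho\le p$ (so the smoothing $\|\e^{\kappa\tau\Delta}\nabla\cdot g\|_p\lesssim\tau^{-1/2-(1/\rho-1/p)}\|g\|_\rho$ is applicable). Inserting the weighted bound $\|\omega_s\|_m\|\eta_s\|_m\lesssim s^{-2(1/p-1/m)}\|\omega\|_{X_T}\|\eta\|_{X_T}$ and computing the resulting Beta integrals, each of the form $\int_0^t(t-s)^{-c}s^{-2(1/p-1/m)}\,\d s$ with $c<1$, yields a factor $t^{1-1/p}$ in each component of the $X_T$-norm. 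A standard bilinear fixed-point argument then produces, for $T$ small depending on $\|\omega_0\|_p$, a unique solution in a ball of $X_T$; uniqueness in $C_TL^p$ follows since any $C_TL^p$-solution necessarily possesses the extra integrability putting it in $X_T$, after which the difference of two solutions is absorbed by the same bilinear estimate on short intervals.

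To globalise and to obtain \eqref{eq:bar-w} I would proceed in two steps. First, the a priori bound $\|\bar\omega_t\|_p\le\|\omega_0\|_p$: the solution is smooth for $t>0$ by parabolic bootstrap, so testing \eqref{eq:NS} with $|\bar\omega|^{p-2}\bar\omega$ annihilates the transport term (divergence-free $\bar u$) and leaves $-\kappa(p-1)\int|\bar\omega|^{p-2}|\nabla\bar\omega|^2\le0$ from the dissipation, whence $\tfrac{\d}{\d t}\|\bar\omega_t\|_p^p\le0$. As the local existence time depends only on $\|\omega_0\|_p$, which does not grow, the solution extends to any $[0,T]$. Second, the gradient bound: I would first upgrade the integrability by a standard bootstrap, propagating $\sup_{0<t\le T}t^{1/p-1/r}\|\omega_t\|_r<\infty$ along a finite ladder of exponents $r$ increasing towards $2$, which in turn controls $\|u_t\|_\infty$ with an integrable time weight; the estimate $\|\nabla\bar\omega_t\|_p\lesssim t^{-1/2}\|\omega_0\|_p$ then follows from the differentiated mild formula $\nabla\bar\omega_t=\nabla\e^{\kappa t\Delta}\omega_0-\int_0^t\nabla\e^{\kappa(t-s)\Delta}(\bar u_s\cdot\nabla\bar\omega_s)\,\d s$ and a singular Gronwall inequality.

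Finally, for the $L^{2/\alpha}_tH^{\alpha,p}_x$ membership I would interpolate: by boundedness of Riesz transforms on $L^p$ ($1<p<\infty$) and the two bounds in \eqref{eq:bar-w}, $\|\bar\omega_t\|_{\dot H^{\alpha,p}}\lesssim\|\bar\omega_t\|_p^{1-\alpha}\|\nabla\bar\omega_t\|_p^{\alpha}\lesssim t^{-\alpha/2}\|\omega_0\|_p$. I want to emphasise that this pointwise bound only gives $\bar\omega\in L^r_tH^{\alpha,p}_x$ for every $r<2/\alpha$, because $\int_0^T t^{-1}\,\d t=\infty$; the closed endpoint $r=2/\alpha$ is borderline and needs a genuine parabolic-smoothing input. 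I would therefore split $\bar\omega=\e^{\kappa t\Delta}\omega_0+B(\bar\omega,\bar\omega)$: the bilinear part obeys $\|B(\bar\omega,\bar\omega)_t\|_{\dot H^{\alpha,p}}\lesssim t^{(1-1/p)-\alpha/2}$ (interpolating the two bilinear bounds), whose $2/\alpha$-th power is integrable at $t=0$ since $1-1/p>0$; the linear part is handled by the heat characterisation of homogeneous Besov spaces, $\big\|\,\|\Lambda^\alpha\e^{\kappa t\Delta}\omega_0\|_p\,\big\|_{L^{2/\alpha}(0,\infty)}\sim\|\omega_0\|_{\dot B^0_{p,2/\alpha}}$, together with the embedding $L^p\hookrightarrow\dot B^0_{p,2/\alpha}$, which holds for $2/\alpha\ge2$, i.e. for all $\alpha\in[0,1)$ when $p<2$. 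Adding the uniform $L^p$ bound upgrades $\dot H^{\alpha,p}$ to $H^{\alpha,p}$. I expect the main obstacle to be closing the nonlinear estimates for $p$ near $1$: the velocity--vorticity product $\bar u\,\bar\omega$ then has Lebesgue exponent dangerously close to (or below) $1$, and it is only the weighted higher-integrability norm, the Lorentz-space Young and Hölder inequalities, and the subsequent integrability bootstrap that keep every intermediate exponent admissible and every time integral convergent.
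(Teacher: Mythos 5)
Your route is genuinely different from the paper's: you build the solution by a Kato-type bilinear contraction in the weighted space $X_T$, get the $L^p$ bound by an energy argument, and treat the borderline $L^{2/\alpha}_t H^{\alpha,p}_x$ membership by splitting off the free evolution and using the thermic characterisation of $\dot B^0_{p,2/\alpha}$. The paper instead applies Schauder's fixed point theorem to the map $\omega\mapsto$ (solution of the linear advection--diffusion equation with drift $K\ast\omega$), importing Gaussian upper and gradient bounds for the fundamental solution of $\partial_t-\Delta+u\cdot\nabla$ with divergence-free $u\in L^\infty_t L^{2p/(2-p)}_x$ from \cite{CHXZ17, ZH24}, and proves uniqueness via Giga--Miyakawa--Osada weighted norms. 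Your exponent bookkeeping is correct (the window $m\in\big(\max\{p,4/3\},\,4p/(p+2)\big]$ is nonempty exactly for $p\in(1,2)$ and enforces both $\rho\ge1$ and $\rho\le p$), and your endpoint observation --- that the pointwise bound $\|\bar\omega_t\|_{H^{\alpha,p}}\lesssim t^{-\alpha/2}$ alone only yields $L^r_t$ for $r<2/\alpha$ --- is a genuinely careful point; your linear-plus-Duhamel argument for the endpoint is in fact more explicit than the paper's one-line appeal to interpolation.

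There is, however, one genuine gap: uniqueness in the full class $C([0,T];L^p)$. You assert that ``any $C_TL^p$-solution necessarily possesses the extra integrability putting it in $X_T$,'' but this is precisely what must be proved, and for $1<p<4/3$ it cannot be extracted from the mild formulation you work with. From $\omega_s\in L^p$ alone, Biot--Savart and Lorentz--Young give $u_s=K\ast\omega_s\in L^{b,p}$ with $1/b=1/p-1/2$, so the product $u_s\omega_s$ has H\"older exponent $1/\rho_0=2/p-1/2>1$ when $p<4/3$: it lies in no $L^r$ with $r\ge1$ (and need not even be locally integrable, since $b<p'$ in this range), and there is no Young or heat-smoothing estimate below $L^1$ with which to control the Duhamel term $\int_0^t \e^{\kappa(t-s)\Delta}\nabla\cdot(u_s\omega_s)\,\d s$. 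So the bootstrap that would place a competitor solution in $X_T$ cannot start; the extra integrability is needed to make sense of the very formula from which you want to derive it. (Your closing remark notices the low-exponent danger, but the weighted norm that rescues your estimates is available only for the solution you constructed, not for an arbitrary element of $C_TL^p$.) The paper closes this loop by a different mechanism: any $C_TL^p$ solution solves a \emph{linear} advection--diffusion equation whose divergence-free drift $u\in L^\infty_T L^{2p/(2-p)}_x$ is subcritical (note $2p/(2-p)>2$), hence by the heat-kernel theory of \cite{CHXZ17, ZH24} it is represented as $\omega_t=\int p^{\omega}_{0,t}(x,\cdot)\,\omega_0(x)\,\d x$ with Gaussian bounds; this yields the decay estimate \eqref{eq:decay} for \emph{every} such solution, and only then does the Gronwall-type argument run. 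You would need to import this linear ingredient (or else restrict your uniqueness claim to $X_T$, or to $p\ge4/3$) for your proof to be complete.
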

	\begin{proof}
        For notational simplicity, we will always take $\kappa=1$ and $h_t(x)=(4\pi t)^{-1} \e^{-|x|^2/4t}$. 
        
        {\it Existence.}
        Given $\omega_0\in L^p$, set 
        \[
          \cX= \left\{\omega \in C([0,T]; L^p): \omega|_{t=0}=\omega_0 ~\mbox{ and }~ \|\omega\|_{C_tL^p_x}\leq \|\omega_0\|_p\right\}. 
        \]
        Take $\omega\in \cX$ and set $u=K*\omega$, where $K$ is the Biot-Savart kernel. Since 
        \[
            \|u\|_{L^\infty_t L^{2p/(2-p)}_x} \lesssim_{p}\|\omega\|_{C_tL^p_x}\lesssim_p \|\omega_0\|_p,   
        \]
        it follows from \cite[Theorem 1.1]{CHXZ17} and the condition $\div u=0$ that $\p_t-\Delta+u\cdot\nabla$ admits a fundamental solution, which we denote by $p^{\omega}_{s,t}(x,y)$, satisfying the following properties: 
        \begin{equation}\label{eq:Conservative}
            \int_{\mR^2} p^{\omega}_{s,t}(x,y)\, \d y = \int_{\mR^2}  p^{\omega}_{s,t}(x,y)\, \d x =1, 
        \end{equation}
        \begin{equation}\label{eq:HKE}
            p^{\omega}_{s,t}(x,y) \lesssim h_{\lambda(t-s)}(x-y) 
        \end{equation}  
        and 
        \begin{equation}\label{eq:Dheat}
            \nabla_y p^{\omega}_{s,t}(x,y)\lesssim t^{-\frac{1}{2}} h_{\lambda(t-s)}(x-y), 
        \end{equation}
        where $\lambda>0$ only depends on $p$ and $\|\omega_0\|_p$. 
        Consider the map 
        \begin{equation*}
            \cX \ni \omega \mapsto \Phi( \omega)_t(y) = \int_{\mR^2} p^{\omega}_{0, t}(x,y) \omega_0 (x) \,\d x, 
        \end{equation*}
        i.e., $\Phi(\omega)$ is the solution of equation 
        \[
            \p_t \xi + u\cdot \nabla \xi = \Delta \xi , \quad \xi|_{t=0}=\omega_0. 
        \]
        Since $\div u=0$, integration by parts yields $\|\Phi(\omega)_t\|_p=\|\xi_t\|_p\leq \|\omega_0\|_p$. We further claim that \(\Phi\) is a continuous map on $\cX$ and that \(\Phi(\cX) \subseteq \cX\) is relatively compact in \(C([0,T]; L^p)\). If it holds, then  by Schauder's fixed point theorem, $\Phi$ has a fixed point in $\cX$, which is a solution to \eqref{eq:NS}. 
        
        Therefore, our task becomes to show the above claim. We start with the equation: 
        \[
            p^{\omega}_{s,t}(x,y)= h_{t-s}(x-y)+\int_s^t \d \tau \int_{\mR^2} p^\omega_{s,\tau}(x,z)\,u_\tau (z)\cdot \nabla h_{t-\tau} (z-y) \,\d z, 
        \]
        (see for instance \cite{CHXZ17} or \cite{ZH24}). From this,  we have 
        \[
            \Phi(\omega)_t= P_{t} \omega_0+\int_{\mR^2} \omega_0(x )\, \d x\int_0^t \d \tau \int_{\mR^2} p^\omega_{0,\tau}(x,z)\,u_\tau (z)\cdot \nabla h_{t-\tau} (z-\cdot)\, \d z. 
        \]
        In the light of \cite[Lemma 2.4]{ZH24}, we find 
        \begin{align*}
            & \left\| \int_{\mR^2} \omega_0(x )\, \d x\int_0^t \d \tau \int_{\mR^2} p^\omega_{0,\tau}(x,z)\,u_\tau (z)\cdot \nabla h_{t-\tau} (z-y)\, \d z \right\|_{L^p_y} \\
            & \lesssim  t^{1-\frac{1}{p}} \left\| \int_{\mR^2} |\omega_0(x )| h_{\lambda t}(x-y) \, \d x \right\|_{L^p_y} \lesssim t^{1-\frac{1}{p}} \|\omega_0\|_p. 
        \end{align*}
        This leads to 
        \begin{align}\label{eq:UC}
            \|\Phi(\omega)_{t}-\omega_0\|_p & \lesssim \|(P_t-I)\omega_0\|_p+t^{1-\frac{1}{p}} \|\omega_0\|_p\to 0 \quad (t\to 0).  
        \end{align}
        Next, according to \cite[Lemma 3.8]{ZH24}, for some $\beta\in (0,1)$ and any $0<s\leq t\leq T$, \begin{align*}
            \|\Phi(\omega)_t-\Phi(\omega)_s\|_p &\lesssim \left\| \int_{\mR^2}|\omega_0(x)|\, |p^\omega_{0,s}(x,y)-p^\omega_{0,t}(x,y)|\, \d x \right\|_{L^p_y} \lesssim s^{-\frac{\beta}{2}} |s-t|^{\beta} \|\omega_0\|_p. 
        \end{align*}
        Together with \eqref{eq:UC}, this implies that \(\Phi(\mathcal{X})\) is equicontinuous in \(C([0,T]; L^p)\). Additionally, from \eqref{eq:Dheat}, we have 
        \[
            \|\nabla \Phi(\omega)_t\|_p \lesssim t^{-\frac{1}{2}} \|\omega_0\|_p, 
        \]
        indicating that for each \(t \in [0,T]\), \(\Phi(\mathcal{X})_t\) is compact in \(L^p\). By the Arzelà–Ascoli theorem, we can conclude that \(\Phi(\mathcal{X})\) is relatively compact in \(C([0,T]; L^p)\).

        The construction above shows that each fixed point of \(\Phi\) belongs to \(L^\infty_t L^p_x \cap L^{2,\infty}_t H^{1,p}_x\). The interpolation theorem implies that these fixed points are also contained in \(L^{\frac{2}{\alpha}}([0,T]; H^{\alpha,p})\) for any $\alpha\in [0,1)$. 
        
        {\it Uniqueness.} 
		Let $\omega^1$ and $\omega^2$ be two solutions of \eqref{eq:NS} in $C([0,T]; L^p)$ with the same initial data $\omega_0$. Set $u^i=K*\omega^i$,  $u=u^1-u^2$ and $\omega=\omega^1-\omega^2$. By the discussion above, we have 
	    \begin{equation*}
	      \omega^i_t (y) = \int_{\mR^2} p_{0, t}^{i}(x,y) \omega_0 (x)\, \d x,  
	    \end{equation*}
        where $p^{i}_{s,t}(x,y)$ is the fundamental solution of  $\partial_t- \Delta+u^i\cdot\nabla$. Together with \eqref{eq:HKE} and Young's inequality \eqref{Eq:Young1}, this yields \begin{equation}\label{eq:decay}
			\sup_{t\in [0,T]} t^{\frac{1}{p}-\frac{1}{r}} \|\omega^i_t\|_{L^{r,1}} \lesssim \sup_{t\in [0,T]} t^{\frac{1}{p}-\frac{1}{r}} \|h_{\lambda t}* \omega_0\|_{L^{r,1}} \lesssim \|\omega_0\|_p<\infty,  \quad r>p. 
		\end{equation}
        Noting that 
		\begin{equation*}
			\begin{aligned}
				\omega^i_t= P_t \omega_0 -\int_0^t \nabla\cdot P_{t-s} ( u^i_s\omega^i_s) \, \d s, 
			\end{aligned}
		\end{equation*}
		we have 
		\begin{equation}\label{eq:winteg}
			\begin{aligned}
				\omega_t &=\omega^1_t-\omega^2_t = -\underbrace{\int_0^t \nabla\cdot P_{t-s}(u^1_s\,\omega_s)\, \d s}_{I_t} - \underbrace{\int_0^t \nabla\cdot P_{t-s}(u_s\,\omega^2_s)\, \d s}_{J_t}. 
			\end{aligned}
		\end{equation}
		For any $r\in (p, \infty]$, following \cite{GigMiyOsa88}, we set 
		\[
		    \|f\|_{p,T}:= \sup_{t\in [0,T]} t^{\frac{1}{p}-\frac{1}{r}} \|f(t)\|_r. 
		\]
		By \eqref{Eq:Young2} and \eqref{eq:decay}, we see that for any $s\in [0,T]$, 
		\begin{align*}
			\|u^1_s\, \omega_s\|_{r}\leq\|u^1_s\|_\infty \|\omega_s\|_r \leq C \|\omega^1_s\|_{L^{2,1}} \|\omega_s\|_r \lesssim  s^{\frac{1}{2}-\frac{1}{p}} \|\omega_s\|_r. 
		\end{align*}
		This yields that for any $r\in (p,2)$, 
		\begin{equation}\label{eq:vorticityI}
			\begin{aligned}
				\|I\|_{p,T} & \leq\sup_{t\in [0,T]} t^{\frac{1}{p}-\frac{1}{r}} \l[\int_0^t (t-s)^{-\frac{1}{2}} s^{\frac{1}{2}+\frac{1}{r}-\frac{2}{p}}\|\omega\|_{p,T}\, \d s\r] 
				\lesssim T^{1-\frac{1}{p}} \|\omega\|_{p,T}, 
			\end{aligned}
		\end{equation}
        
		Regarding $J_t$, by Young's inequality again and \eqref{eq:decay}, 
		\begin{align*}
			\|u_s\omega^2_s\|_{r} \leq \|u_s\|_{l} \|\omega^2_s\|_{2} \lesssim \|\omega_s\|_{r} \, s^{\frac{1}{2}-\frac{1}{p}} 
		\end{align*}
		with $r\in (1, 2)$ and $\frac{1}{l}=\frac{1}{r}-\frac{1}{2}$. Thus, 
		\begin{equation}\label{eq:vorticityJ}
			\begin{aligned}
				\|J\|_{p,T} & \leq \sup_{t\in [0,T]}  t^{\frac{1}{p}-\frac{1}{r}}  \l[ \int_0^t (t-s)^{-\frac{1}{2}} s^{\frac{1}{2}+\frac{1}{r}-\frac{2}{p}}\|\omega\|_{p,T}\, \d s\r] \lesssim T^{1-\frac{1}{p}} \|\omega\|_{p,T}. 
			\end{aligned}
		\end{equation}
		Combining \eqref{eq:winteg}, \eqref{eq:vorticityI} and \eqref{eq:vorticityJ}, we get 
		\[
		\|\omega\|_{p,T}\leq C'T^{1-\frac{1}{p}} \|\omega\|_{p,T}, 
		\]
        where $C'$ only depends on $p, r$ and $\|\omega_0\|_p$. Choosing $T_0>0$ sufficiently small such that $C'T_0^{1-\frac{1}{p}}<1$, we obtain the desired result for $t\in [0,T_0]$. Repetition of the above process ensures the desired uniqueness result. 
	\end{proof}
	
	\subsection{Proof for Theorem \ref{thm:SEE}}
	
	\begin{proof}[Proof of Theorem \ref{thm:SEE}]
		Let $\bar{\omega} \in C([0,T]; L^p)$ 
		be the unique solution to \eqref{eq:NS}.
		Rewriting \eqref{eq:SEE-I} and \eqref{eq:NS} in mild form, and taking difference, we obtain
		$$
		\omega_t -\bar\omega_t= -\int_0^t P_{t-s} (u_s\cdot\nabla\omega_s- \bar u_s\cdot\nabla \bar\omega_s)\,\d s + Z_t,
		$$
		where $Z_t$ is the stochastic convolution
		$$
		Z_t= - \int_0^t P_{t-s} \nabla \cdot(\omega_s\, \d W_s).
		$$
		We rewrite the above equation as
		\begin{equation}\label{proof-difference}
			\omega_t -\bar\omega_t= -\int_0^t P_{t-s}( (u_s-\bar u_s)\cdot\nabla\omega_s)\,\d s - \int_0^t P_{t-s} (\bar u_s\cdot\nabla (\omega_s- \bar\omega_s))\,\d s + Z_t,
		\end{equation}
		and denote the first two terms on the right-hand side as $I^1_t$ and $I^2_t$.
		For $\alpha$ given as in the statement of Theorem \ref{thm:SEE}, we have
		$$
		\aligned
		\|I^1_t\|_{\dot{H}^{-\alpha}} &\leq \int_0^t \l\|P_{t-s}( (u_s-\bar u_s)\cdot\nabla\omega_s) \r\|_{\dot{H}^{-\alpha}} \,\d s \\
		& \lesssim_p \int_0^t \frac1{(t-s)^{1/p}} \|(u_s-\bar u_s)\cdot\nabla\omega_s \|_{\dot{H}^{-\alpha-2/p}}\,\d s
		\endaligned
		$$
by \eqref{eq:hke1}.	Using the divergence free property of $u$ and $\bar{u}$, we arrive at
	\begin{equation*}
		\begin{aligned}
		\|I^1_t\|_{\dot H^{-\alpha}} &\lesssim \int_0^t \frac1{(t-s)^{1/p}} \|(u_s-\bar u_s)\omega_s \|_{\dot{H}^{1-\alpha-2/p}} \,\d s \\
		&\lesssim_{\alpha, p} \int_0^t \frac1{(t-s)^{1/p}} \|(u_s-\bar u_s)\omega_s \|_{\frac{2p}{\alpha p+2}}\,\d s \\
        &\leq \int_0^t \frac1{(t-s)^{1/p}} \| u_s-\bar u_s \|_{2/\alpha} \| \omega_s \|_{p} \,\d s
		\end{aligned}
		\end{equation*}
by Sobolev embedding (which requires $\alpha<2-2/p$) and H\"older's inequality. Applying again the Sobolev embedding inequality yields
	\begin{equation*}
		\begin{aligned}
		\|I^1_t\|_{\dot H^{-\alpha}}
		&\lesssim_\alpha \int_0^t \frac1{(t-s)^{1/p}} \|u_s-\bar u_s \|_{\dot{H}^{1-\alpha}} \|\omega_s \|_{p}\,\d s \\
		&\leq \|\omega_0 \|_{p} \int_0^t \frac1{(t-s)^{1/p}} \|\omega_s-\bar{\omega}_s \|_{\dot{H}^{-\alpha}} \,\d s \\
		&\leq  \|\omega_0 \|_{p} \bigg(\int_0^t \frac{\d s}{(t-s)^{q'/p}} \bigg)^{1/q'} \bigg(\int_0^t  \|\omega_s-\bar{\omega}_s \|_{\dot{H}^{-\alpha}}^q\,\d s \bigg)^{1/q} ,
		\end{aligned}
		\end{equation*}
		where in the last step we have used H\"older's inequality with exponents $\frac1{q'}+ \frac1q=1$. Since $q>p'$, we have $q'/p<1$, thus we obtain
		\begin{equation}\label{eq:aprox1}
			\|I^1_t\|_{\dot{H}^{-\alpha}}^q \lesssim_{\alpha, p, q, T} \|\omega_0 \|_{p}^q\int_0^t  \|\omega_s-\bar{\omega}_s \|_{\dot{H}^{-\alpha}}^q\,\d s.
		\end{equation}
		
		Next we estimate $I^2_t$: by \eqref{eq:hke1},
		\begin{equation}\label{eq:aprox2}
			\begin{aligned}
			\|I^2_t\|_{\dot{H}^{-\alpha}} &\lesssim \int_0^t \|P_{t-s}(\bar u_s\cdot\nabla(\omega_s-\bar\omega_s)) \|_{\dot{H}^{-\alpha}} \,\d s \\
			&\lesssim \int_0^t \frac1{(t-s)^{1/2}} \|\bar u_s\cdot\nabla(\omega_s-\bar\omega_s) \|_{\dot{H}^{-\alpha-1}} \,\d s \\
			&\lesssim \int_0^t \frac1{(t-s)^{1/2}} \|\bar u_s (\omega_s-\bar\omega_s) \|_{\dot{H}^{-\alpha}} \,\d s.
			\end{aligned}
		\end{equation}
		Denote $v=\omega-\bar{\omega}$ and $\Lambda= (-\Delta)^{1/2}$; then
		\begin{equation}\label{eq:prod-H}
			\begin{aligned}
				\|\bar{u}\, v\|_{\dot{H}^{-\alpha}} &= \sup_{\| \varphi \|_2\leq 1} \<\bar{u}\, v, \Lambda^{-\alpha}\varphi\>=  \sup_{\| \varphi \|_2\leq 1} \<v, \bar{u}\, \Lambda^{-\alpha} \varphi\> \\
&\leq \|v\|_{\dot{H}^{-\alpha}} \sup_{\| \varphi \|_2\leq 1} \| \bar{u}\, \Lambda^{-\alpha} \varphi \|_{\dot{H}^\alpha}
=\|v\|_{\dot{H}^{-\alpha}} \sup_{\| \varphi \|_2\leq 1}  \|\Lambda^{\alpha}(\bar{u}\, \Lambda^{-\alpha} \varphi)\|_2.
			\end{aligned}
		\end{equation}
		By \eqref{eq:D-prod}, we have
		\begin{equation}\label{eq:D-prod0}
			\Lambda^{\alpha}(\bar{u}\Lambda^{-\alpha} \varphi)= \Lambda^{\alpha} \bar{u}  \, \Lambda^{-\alpha} \varphi +  \bar{u} \varphi - \Gamma_{\alpha}(\bar{u}, \Lambda^{-\alpha} \varphi),
		\end{equation}
		where $\Gamma_\alpha$ is given by \eqref{eq:Gamma}.
		
		Recall that $\alpha\in \big(\frac{2}{p}-1, 2-\frac{2}{p}\big)$; using H\"older's inequality and Sobolev embedding, we get
		\begin{equation}\label{eq:D-prod1}
			\begin{aligned}
			\| \Lambda^{\alpha} \bar{u}  \, \Lambda^{-\alpha} \varphi\|_2
&= \| \Lambda^{\alpha} (K\ast\bar{\omega})  \, \Lambda^{-\alpha} \varphi\|_2 \lesssim\| \Lambda^{\alpha-1} \bar{\omega}\|_{2/\alpha}  \| \Lambda^{-\alpha}\varphi\|_{2/(1-\alpha)}\\
			& \lesssim \|\bar{\omega}\|_{2} \|\varphi\|_2 \lesssim \|\bar{\omega}\|_{H^{\alpha, p}} \|\varphi\|_2,
			\end{aligned}
		\end{equation}
		where in the last step we used the relation $\frac{1}{p}>\frac{1}{2}>\frac{1}{p}-\frac{\alpha}{2}$.
		
		Similarly, noting that $\alpha>\frac{2}{p}-1$, one can see that
		\begin{equation}\label{eq:D-prod2}
			\begin{aligned}
			\|\bar{u}\,\varphi\|_{2} &\leq \|\bar{ u }\|_\infty \|\varphi\|_2 \lesssim \|\bar u\|_{H^{\alpha, 2p/(2-p)}} \|\varphi\|_2\\
				& =\|K\ast \bar{\omega}\|_{H^{\alpha, 2p/(2-p)}}\|\varphi\|_2 \lesssim \|\bar{\omega}\|_{H^{\alpha, p}}\|\varphi\|_2,
			\end{aligned}
		\end{equation}
		where in the last step we used \eqref{Eq:Young1}.
		
For the last term of the right-hand side of \eqref{eq:D-prod0}, we set $\delta= 1+\alpha-\frac{2}{p}>0$ and $\theta=1-\frac{2-p}{\alpha p}\in (0,1)$. Thanks to \eqref{eq:dif-Lp} and Sobolev embedding, we have
		\[
		\l\| \Lambda^{-\alpha} \varphi(\cdot+z)-\Lambda^{-\alpha}\varphi(\cdot)\r\|_2\lesssim |z|^{\alpha} \|\varphi\|_2
		\]
		and
		\[
		\l\| \Lambda^{-\alpha} \varphi(\cdot+z)-\Lambda^{-\alpha}\varphi(\cdot)\r\|_{\frac{2}{1-\alpha}}\lesssim \|\varphi\|_2,
		\]
		which together with interpolation inequalities yield
		\[
		\l\| \Lambda^{-\alpha} \varphi(\cdot+z)-\Lambda^{-\alpha}\varphi(\cdot)\r\|_{\frac{p}{p-1}}\lesssim |z|^{\theta\alpha} \|\varphi\|_2.
		\]
Using above estimates,  \eqref{eq:Gamma}, \eqref{eq:dif-Lp} and Sobolev embedding, we obtain
		\begin{equation}\label{eq:D-prod3}
			\begin{aligned}
				\|\Gamma_{\alpha}(\bar{u}, \Lambda^{-\alpha} \varphi)\|_2 & \lesssim \int_{\mR^d} \frac{\l\| \l(\bar{u}(\cdot+z)-\bar{u}(\cdot)\r) \l(\Lambda^{-\alpha} \varphi(\cdot+z)-\Lambda^{-\alpha}\varphi(\cdot)\r) \r\|_2}{|z|^{d+\alpha}} \ \d z\\
				& \lesssim \|\bar{u}\|_{C^\delta} \|\varphi\|_2 \int_{|z|\leq 1} |z|^{-d+\delta} \, \d z + \|\bar{u}\|_{\frac{2p}{2-p}} \|\varphi\|_2 \int_{|z|>1} |z|^{-d-\alpha+\theta \alpha}\, \d z\\
				& \lesssim \|\bar{u}\|_{H^{\alpha, \frac{2p}{2-p}}} \|\varphi\|_2 \\
				&\lesssim\|\bar{\omega}\|_{H^{\alpha, p}}  \|\varphi\|_2 .
			\end{aligned}
		\end{equation}
		Combining \eqref{eq:prod-H}--\eqref{eq:D-prod3}, we arrive at
		$$
		    \|\bar{u}_s (\omega_s-\bar{\omega}_s)\|_{\dot{H}^{-\alpha}}\lesssim \|\omega_s-\bar{\omega}_s\|_{\dot{H}^{-\alpha}} \|\bar{\omega}_s\|_{H^{\alpha, p}}.
		$$

		Plugging the above estimate into \eqref{eq:aprox2}, using H\"older's inequality and noting that $q'<2$, we obtain
		\begin{equation*}
			\begin{aligned}
				\|I^2_t\|_{\dot{H}^{-\alpha}} &\lesssim \l[\int_0^t \frac1{(t-s)^{q'/2}}\,\d s \r]^{1/q'} \l[ \int_0^t \|\bar{\omega}_s \|_{H^{\alpha,p}}^q \|\omega_s-\bar\omega_s \|_{\dot{H}^{-\alpha}}^q \,\d s \r]^{1/q}\\
				&\lesssim_{T} \l[ \int_0^t   \|\omega_s-\bar\omega_s \|_{\dot{H}^{-\alpha}}^q \, \|\bar{\omega}_s \|_{H^{\alpha,p}}^q  \d s \r]^{1/q}.
			\end{aligned}
		\end{equation*}
		This together with \eqref{proof-difference} and \eqref{eq:aprox1} implies
		$$\|\omega_t -\bar\omega_t\|_{\dot H^{-\alpha}}^q \lesssim_{\alpha, p, q,T} \int_0^t   \|\omega_s-\bar{\omega}_s \|_{\dot H^{-\alpha}}^q \l( \|\omega_0 \|_{p}^q  + \|\bar{\omega}_s \|_{H^{\alpha,p}}^q  \r)\d s + \|Z_t \|_{\dot H^{-\alpha}}^q. $$
		Gronwall's inequality yields
		\begin{equation}
			\begin{aligned}
				\|\omega-\bar{\omega}\|_{C_T \dot  H^{-\alpha}}^q & \leq C \|Z \|_{C_T \dot H^{-\alpha}}^q \exp \l\{ C  {\|\omega_0 \|_{p}^q} +C \|\bar{\omega} \|_{L^q_TH^{\alpha,p}}^q\r\}\\
				& \leq C \|Z \|_{C_T \dot H^{-\alpha}}^q \exp \l\{ C  {\|\omega_0 \|_{p}^q}  \r\},
			\end{aligned}
		\end{equation}
		where $C$ is a constant depending only on $\alpha, p, q,T$.
		Note that the exponential is deterministic, thus we complete the proof by applying the maximal estimate in Corollary \ref{cor-stoch-convol-2} (as $d=2$, the range for $\alpha$ becomes $(2-p, 1]$ which contains the one in the statement of Theorem \ref{thm:SEE}).
	\end{proof}

%

	\subsection*{Acknowledgements}
The first author is grateful to the National Key R\&D Program
of China (No. 2020YFA0712700), the National Natural Science Foundation of China (Nos. 11931004, 12090010, 12090014) and the Youth Innovation Promotion Association, CAS (Y2021002). The second author is supported by JSPS KAKENHI, Grant-in-Aid for Scientific Research (C) (No. 20K0367). The research of Guohuan Zhao is supported by the National Natural Science Foundation of China (Nos. 12288201, 12271352, 12201611).


\begin{thebibliography}{99} \setlength{\itemsep}{-1pt}
		
	\bibitem{Adams} R. A. Adams, J. J. F. Fournier. Sobolev spaces. Second edition. Pure and Applied Mathematics (Amsterdam), 140. \emph{Elsevier/Academic Press, Amsterdam}, 2003.

    \bibitem{Agr24} A. Agresti. Delayed blow-up and enhanced diffusion by transport noise for systems of reaction-diffusion equations. \emph{Stoch. Partial Differ. Equ. Anal. Comput.} \textbf{12} (2024), no. 3, 1907--1981.

    \bibitem{Agr24b} A. Agresti. Global smooth solutions by transport noise of 3D Navier-Stokes equations with small hyperviscosity, arXiv:2406.09267.
		
	\bibitem{BCD} H. Bahouri, J. Chemin, R. Danchin. Fourier analysis and nonlinear partial differential equations.
	\emph{Springer}, 2011.
		
		
	\bibitem{Ben-Art} M. Ben-Artzi. Global solutions of two-dimensional Navier-Stokes and Euler equations. \emph{Arch. Rational Mech. Anal.} \textbf{128} (1994), no. 4, 329--358.

    \bibitem{BFM16} Z. Brze\'{z}niak, F. Flandoli, M. Maurelli. Existence and uniqueness for stochastic 2D Euler flows with bounded vorticity. \emph{Arch. Ration. Mech. Anal.} \textbf{221} (2016), no. 1, 107--142.

    \bibitem{BFL24} F. Butori, F. Flandoli, E. Luongo. On the It\^o-Stratonovich Diffusion Limit for the Magnetic Field in a 3D Thin Domain, arXiv:2401.15701.

    \bibitem{BFLT24} F. Butori, F. Flandoli, E. Luongo, Y. Tahraoui. Background Vlasov equations and Young measures for passive scalar and vector advection equations under special stochastic scaling limits, arXiv:2407.10594.

    \bibitem{ButLuon24} F. Butori, E. Luongo. Mean-Field Magnetohydrodynamics Models as Scaling Limits of Stochastic Induction Equations, arXiv:2406.07206.

    \bibitem{CarLuon23} G. Carigi, E. Luongo. Dissipation properties of transport noise in the two-layer quasi-geostrophic model. \emph{J. Math. Fluid Mech.} \textbf{25} (2023), no. 2, Paper No. 28, 27 pp.

    \bibitem{CHXZ17} Z. Q. Chen, E. Hu, L. Xie, X. Zhang. Heat kernels for non-symmetric diffusion operators with jumps. \emph{J. Differential Equations} \textbf{263} (2017), no. 10, 6576--6634.
        
	\bibitem{CogFla} M. Coghi, F. Flandoli.  Propagation of chaos for interacting particles subject to environmental noise. \emph{Ann. Appl. Probab.} \textbf{26} (2016), no. 3, 1407--1442.
		
	\bibitem{CogMau} M. Coghi, M. Maurelli. Existence and uniqueness by Kraichnan noise for 2D Euler equations with unbounded vorticity, arXiv:2308.03216.

 \bibitem{CFH19} D. Crisan, F. Flandoli, D.D. Holm. Solution properties of a 3D stochastic Euler fluid equation. \emph{J. Nonlinear Sci.} \textbf{29} (2019), no. 3, 813--870.
		
	\bibitem{FGL21} F. Flandoli, L. Galeati, D. Luo. Scaling limit of stochastic 2D Euler equations with transport noises to the deterministic Navier-Stokes equations. \emph{J. Evol. Equ.} \textbf{21} (2021), no. 1, 567--600.
        

    \bibitem{FGL21b} F. Flandoli, L. Galeati, D. Luo. Delayed blow-up by transport noise. \emph{Comm. Partial Differential Equations} \textbf{46} (2021), no. 9, 1757--1788.
		
	\bibitem{FGL24} F. Flandoli, L. Galeati, D. Luo. Quantitative convergence rates for scaling limit of SPDEs with transport noise. \emph{J. Differential Equations} \textbf{394} (2024), 237--277.
		
	\bibitem{FlaGat} F. Flandoli, D. Gatarek. Martingale and stationary solutions for stochastic Navier--Stokes equations. \emph{Probab. Theory Related Fields} \textbf{102} (1995), no. 3, 367--391.

    \bibitem{FGP10} F. Flandoli, M. Gubinelli, E. Priola. Well-posedness of the transport equation by stochastic perturbation. \emph{Invent. Math.} \textbf{180} (2010), no. 1, 1--53.

	\bibitem{FlanLuo19} F. Flandoli, D. Luo. $\rho$-white noise solutions to 2D stochastic Euler equations. \emph{Probab. Theory Related Fields} \textbf{175} (2019), no. 3--4, 783--832.

	\bibitem{FlanLuo21} F. Flandoli, D. Luo. High mode transport noise improves vorticity blow-up control in 3D Navier-Stokes equations. \emph{Probab. Theory Related Fields} \textbf{180} (2021), no. 1--2, 309--363.

	\bibitem{FlanLuo24} F. Flandoli, D. Luo. On the Boussinesq hypothesis for a stochastic Proudman-Taylor model. \emph{SIAM J. Math. Anal.} \textbf{56} (2024), no. 3, 3886--3923.

    \bibitem{FLL24} F. Flandoli, D. Luo, E. Luongo. 2D Smagorinsky-type large eddy models as limits of stochastic PDEs. \emph{J. Nonlinear Sci.} \textbf{34} (2024), no. 3, Paper No. 54, 25 pp.

    \bibitem{FlaLuon23} F. Flandoli, E. Luongo. Stochastic partial differential equations in fluid mechanics. Lecture Notes in Mathematics, 2330. \emph{Springer, Singapore}, 2023.

    \bibitem{FlanPap21} F. Flandoli, U. Pappalettera. 2D Euler equations with stratonovich transport noise as a large-scale stochastic model reduction, \emph{J. Nonlinear Sci.} \textbf{31} (1) (2021) 38, Paper No. 24.

    \bibitem{FlanPap22} F. Flandoli, U. Pappalettera. From additive to transport noise in 2D fluid dynamics, \emph{Stoch. PDE: Anal. Comp.} \textbf{10} (3) (2022) 964--1004.
		
	\bibitem{Gal} L. Galeati. On the convergence of stochastic transport equations to a deterministic parabolic one. \emph{Stoch. Partial Differ. Equ. Anal. Comput.} \textbf{8} (2020), no. 4, 833--868.

    \bibitem{GalLuo24} L. Galeati, D. Luo. LDP and CLT for SPDEs with transport noise. \emph{Stoch. Partial Differ. Equ. Anal. Comput.} \textbf{12} (2024), no. 1, 736--793.
		
   \bibitem{GalLuo} L. Galeati, D. Luo. Weak well-posedness by transport noise for a class of 2D fluid dynamics equations, arXiv:2305.08761.

   \bibitem{GigMiyOsa88} Y. Giga, T. Miyakawa, H. Osada. Two-dimensional Navier-Stokes flow with measures as initial vorticity. \emph{Arch. Rational Mech. Anal.} {\bf 104} (1988), no.~3, 223--250. 

    \bibitem{Gra08}L. Grafakos. Classical Fourier Analysis. Springer, 2008.
  
    \bibitem{Holm15} D.D. Holm. Variational principles for stochastic fluid dynamics, \emph{Proc. R. Soc. A} \textbf{471} (2015) 20140963.

    \bibitem{JiaoLuo} S. Jiao, D. Luo. On the pathwise uniqueness of stochastic 2D Euler equations with Kraichnan noise and $L^p$-data, arXiv:2406.07167.
		
	\bibitem{Kunita} H. Kunita. Stochastic flows and stochastic differential equations. Vol. 24, Cambridge University Press, 1997.

    \bibitem{Luo21} D. Luo. Convergence of stochastic 2D inviscid Boussinesq equations with transport noise to a deterministic viscous system. \emph{Nonlinearity} \textbf{34} (2021), no. 12, 8311--8330.

    \bibitem{Luo23} D. Luo. Enhanced Dissipation for Stochastic Navier-Stokes Equations with Transport Noise. \emph{ J. Dynam. Differential Equations} (2023). https://doi.org/10.1007/s10884-023-10307-w.

    \bibitem{LuoTang23} D. Luo, B. Tang. Stochastic inviscid Leray-$\alpha$ model with transport noise: convergence rates and CLT. \emph{Nonlinear Anal.} \textbf{234} (2023), Paper No. 113301, 34 pp.

    \bibitem{ZH24} G. Zhao. Existence and Uniqueness for McKean-Vlasov Equations with Singular Interactions. \emph{Potential Anal} (2024). https://doi.org/10.1007/s11118-024-10148-2.

	\end{thebibliography}
\end{document}